\theoremstyle{plain} 
\newtheorem{theorem}{\indent\sc Theorem}[section]
\newtheorem{lemma}[theorem]{\indent\sc Lemma}
\newtheorem{corollary}[theorem]{\indent\sc Corollary}
\newtheorem{proposition}[theorem]{\indent\sc Proposition}
\newtheorem{conjecture}[theorem]{\indent\sc Conjecture}
\theoremstyle{definition} 
\newtheorem{remark}[theorem]{\indent\sc Remark}
\newtheorem{example}[theorem]{\indent\sc Example}
\newtheorem{problem}[theorem]{\indent\sc Problem}
\newtheorem{question}[theorem]{\indent\sc Question}
\begin{document}

\title[Hermitian metrics on the anti-canonical bundle]
{Hermitian metrics on the anti-canonical bundle of the blow-up of the projective plane at nine points} 

\author[T. Koike]{Takayuki Koike} 

\subjclass[2010]{ 
Primary 32J25; Secondary 14C20.
}
%
\keywords{ 
The blow-up of the projective plane at nine points, Hermitian metrics, neighborhoods of subvarieties, Ueda theory.
}
\address{
Department of Mathematics, Graduate School of Science, Osaka City University \endgraf
3-3-138, Sugimoto, Sumiyoshi-ku Osaka, 558-8585 \endgraf
Japan
}
\email{tkoike@sci.osaka-cu.ac.jp}

\maketitle

\begin{abstract}
We investigate Hermitian metrics on the anti-canonical bundle of a rational surface obtained by blowing up the projective plane at nine points. 
For that purpose, we pose a modified variant of an argument made by Ueda on the complex analytic structure of a neighborhood of a subvariety by considering the deformation of the complex structure. 
\end{abstract}

\section{Introduction}

In this paper, we investigate the complex analytic structure of a small neighborhood of a subvariety of a complex manifold. 
As our motivation comes from a study of Hermitian metrics of the anti-canonical bundle of some concrete examples of complex projective manifolds, 
we explain and describe our main results in this context in this section. 

First, let us explain our main interest on Hermitian metrics of line bundles, 
which is on semi-positivity criteria for nef line bundles. 
Let $X$ be a complex projective manifold and $L$ be a holomorphic line bundle on $X$. 
We say that $L$ is {\it nef} if the intersection number $(L. C)$ is non-negative for any compact complex curve $C$ of $X$, 
and that $L$ is {\it semi-positive} if $L$ admits a $C^\infty$'ly smooth Hermitian metric $h$ such that Chern curvature tensor $\sqrt{-1}\Theta_h$ is semi-positive. 
As is easily shown, $L$ is nef if it is semi-positive. 
The first example, as far as the author knows, of $(X, L)$ such that $L$ is nef however is not semi-positive was constructed by Demailly, Peternell, and Schneider in \cite[Example 1.7]{DPS}. 
In \cite{K6}, we also constructed another example of such $(X, L)$ by choosing suitable nine points $Z=\{p_ 1, p_2, \dots, p_9\}$ from the complex projective plane $\mathbb{P}^2$, letting $X$ be the blow-up ${\rm Bl}_Z\mathbb{P}^2$ of $\mathbb{P}^2$ at $Z$, and by letting $L$ be the anti-canonical line bundle $K_X^{-1}$. 
On the other hand, by the studies of Arnol'd, Ueda, and Brunella, it is known that the anti-canonical line bundle $K_X^{-1}$ is semi-positive when $X={\rm Bl}_Z\mathbb{P}^2$ is as above for almost every choice of the nine points $Z$ in the sense of the Lebesgue measure \cite{A}, \cite{U}, \cite{B} (see also  \cite[\S 1]{D2}). 
Let us note that, for any of the examples above, there exists a reduced divisor $Y\subset X$ which is included in the complete linear system $|L|$ such that the line bundle $L|_Y:=i_Y^*L$ is topologically trivial, where $i_Y\colon Y\to X$ is the inclusion. 
Motivated by such circumstance, we are interested in the following (see also Conjecture \ref{conj:main}): 

\begin{problem}
Let $X$ be a complex manifold and $Y\subset X$ be a (reduced) hypersurface. 
Assume that $Y$ is compact and $N_{Y/X}:=i_Y^*[Y]$ is topologically trivial, where $[Y]$ is the holomorphic line bundle on $X$ such that $\mathcal{O}_X([Y])=\mathcal{O}_X(Y)$. 
When is $[Y]$ semi-positive?
\end{problem}

We will see previous results on this problem generally in \S \ref{section:relation_sp_nbhd}. 
Here, let us focus on one of the most interesting cases, which is the case where $X=X_Z$ is the one obtained by blowing up $\mathbb{P}^2$ at nine points $Z=\{p_ 1, p_2, \dots, p_9\}\subset \mathbb{P}^2$. 
As nothing is unclear on (singular) Hermitian metrics on $K_{X_Z}^{-1}$ if it is not nef (see \cite[\S 7]{K6}), we assume that $K_{X_Z}^{-1}$ is nef. 
Then, according to \cite[Proposition 7.10]{K6}, there exists a reduced divisor $Y_Z\in |K_{X_Z}^{-1}|$ such that the restriction $K_{X_Z}^{-1}|_{Y_Z}$ admits a flat connection (i.e. all the transition functions are $\mathbb{C}^*$-constant for a suitable choice of local trivializations) and that $Y_Z$ is the strict transform of either a smooth elliptic curve, a cycle of rational curves, a curve with a cusp, or three lines intersecting at a point of $\mathbb{P}^2$. 
Here we mean by {\it a cycle of rational curves} a one-dimensional compact reduced variety with only nodal (i.e. normal crossing ) singularities whose normalization consists of finite numbers of $\mathbb{P}^1$'s and whose dual graph is a cycle graph 
(Note that a rational curve with a node is also a cycle of rational curves in our definition). 
We are mainly interested in the case where $Y_Z$ is a smooth elliptic curve or a cycle of rational curves, since the other cases have already been investigated in the proof of \cite[Proposition 7.10 (ii)]{K6}. 

First, let us describe our main result for $(X_Z, Y_Z)$ when $Y_Z$ is a cycle of rational curves. 
In this case, there exists an isomorphism $\alpha\colon {\rm Pic}^0(Y_Z)\to \mathbb{C}^*:=\mathbb{C}\setminus \{0\}$. 
Note that all the elements of ${\rm Pic}^0(Y_Z)$ admit flat connections (i.e. the natural map $H^1(Y, \mathbb{C}^*)\to {\rm Pic}^0(Y_Z)$ is surjective). 
Note also that an element $L\in {\rm Pic}^0(Y_Z)$ is unitary flat (i.e. $L$ admits a Hermitian metric $h$ with $\sqrt{-1}\Theta_j\equiv 0$) if and only if $|\alpha(L)|=1$. 
In the present paper, we show the following: 

\begin{theorem}\label{thm:main_9pt_b-up_of_P^2_nodal}
Let $(X_Z, Y_Z)$ be as above. 
Assume that the anti-canonical bundle $K_{X_Z}^{-1}$ is nef, and that $Y_Z$ is a cycle of rational curves. 
Then the followings are equivalent: \\
$(i)$ $K_{X_Z}^{-1}$ is semi-positive. \\
$(ii)$ $N_Z$ is unitary flat,  where $N_Z:=K_{X_Z}^{-1}|_{Y_Z}$ (i.e. $|\alpha(N_Z)|=1$).  \\
$(iii)$ $Y_Z$ admits a pseudoflat neighborhoods system (i.e. there exists a fundamental system $\{V_\varepsilon\}_\varepsilon$ of neighborhoods of $Y_Z$ in $X_Z$ such that the boundary $\partial V_\varepsilon$ is Levi-flat for each $\varepsilon$). \\
$(iv)$ The set $\{T\in c_1(K_{X_Z}^{-1})\mid T: \text{closed semi-positive}\ (1, 1)-\text{current}\}$ is not a singleton. 
\end{theorem}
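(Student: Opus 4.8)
The plan is to establish the chain of implications $(i)\Rightarrow(iv)\Rightarrow(ii)\Rightarrow(iii)\Rightarrow(i)$, with the bulk of the analytic work concentrated in the passage $(ii)\Rightarrow(iii)$, which is where the modified Ueda-theoretic argument promised in the abstract must enter. For $(i)\Rightarrow(iv)$: if $h$ is a smooth semi-positive metric on $K_{X_Z}^{-1}$, its curvature current $T_h$ lies in the set in $(iv)$; to see the set is not a singleton one perturbs $h$ using a potential supported away from $Y_Z$, or more robustly argues that since $K_{X_Z}^{-1}|_{Y_Z}$ is topologically trivial and admits a flat connection, the restriction $T_h|_{Y_Z}$ must vanish (a positive closed $(1,1)$-current cohomologous to zero on the compact curve $Y_Z$ is zero), so $T_h$ puts no mass on $Y_Z$; then adding a small multiple of the current of integration $[Y_Z]$ (which is semi-positive and in the right class since $Y_Z\in|K_{X_Z}^{-1}|$), or subtracting it, produces a genuinely different element — here one uses nefness and the structure of the minimal elliptic fibration to guarantee enough room. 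For $(iv)\Rightarrow(ii)$: if $N_Z$ is not unitary flat, i.e. $|\alpha(N_Z)|\ne1$, then by the results recalled before the theorem the only closed semi-positive current in $c_1(K_{X_Z}^{-1})$ should be forced to be $[Y_Z]$ itself (intuitively, $Y_Z$ is the unique obstruction and any such current must be supported on it), contradicting $(iv)$; this direction will likely be handled by a direct analysis of the anti-canonical fibration, using that $K_{X_Z}^{-1}=\mathcal{O}_{X_Z}(Y_Z)$ has a section vanishing exactly on $Y_Z$ and that numerical/nef data pins down where positive currents can live.

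The heart of the argument is $(ii)\Rightarrow(iii)$. Assuming $|\alpha(N_Z)|=1$, I would run the Ueda-type obstruction theory for the pair $(Y_Z, X_Z)$: one studies the obstruction classes $u_n(Y_Z, X_Z)\in H^1(Y_Z, N_Z^{-n})$ to extending a flat structure on $N_Z$ to infinitesimal neighborhoods of order $n$. When $Y_Z$ is a cycle of rational curves and $N_Z$ is unitary flat, the relevant cohomology groups and the Diophantine condition on $\alpha(N_Z)$ governing convergence must be controlled; the novelty — as flagged in the abstract — is that one does not expect the obstructions to vanish on the nose for the given complex structure, but rather one deforms the complex structure of $X_Z$ (equivalently moves the nine points $Z$ within the locus keeping $K^{-1}$ nef and $Y_Z$ of the same type) to kill them, then transports the resulting Levi-flat foliated neighborhood back. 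Concretely: produce a fundamental system $\{V_\varepsilon\}$ of neighborhoods with Levi-flat boundary by building a semi-positive function whose level sets foliate a neighborhood of $Y_Z$, using the flat structure on $N_Z$ to write down, order by order, a strictly plurisubharmonic exhaustion-type function $-\log\|\cdot\|^2_{h_\varepsilon}$ off $Y_Z$. Finally, $(iii)\Rightarrow(i)$ is the relatively soft direction: given a pseudoflat neighborhood system $\{V_\varepsilon\}$, glue the canonical Levi-flat structures to obtain a global semi-positive metric on $[Y_Z]=K_{X_Z}^{-1}$ — the defining functions of the $\partial V_\varepsilon$ patch together (after normalization by the flat transition data) into a metric with semi-positive Chern curvature, an argument essentially due to Ueda and Brunella that I would reproduce in the present setting.

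The main obstacle I anticipate is exactly the deformation step inside $(ii)\Rightarrow(iii)$: one must show that within the family of blow-ups $X_Z$ with $K_{X_Z}^{-1}$ nef and $Y_Z$ a cycle of rational curves, the sub-locus where all higher Ueda obstructions vanish is large enough to contain (or to be deformable from) the given $Z$ while keeping $\alpha(N_Z)$ fixed on the unit circle — this requires understanding the interplay between the period map $\alpha\colon \mathrm{Pic}^0(Y_Z)\to\mathbb{C}^*$, the moduli of the cycle configuration, and the vanishing of $u_n\in H^1(Y_Z, N_Z^{-n})$, and the possible presence of a genuine Diophantine (small-divisor) condition when $\alpha(N_Z)$ is a root of unity versus a non-torsion unitary point. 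A secondary technical point is the non-normality of $Y_Z$ (the nodes of the cycle), which forces one to work with the cohomology of the singular curve and to check that the Ueda classes and the gluing of Levi-flat hypersurfaces behave well across the nodes; I would treat this by pulling back to the normalization $\widetilde{Y_Z}=\bigsqcup\mathbb{P}^1$ and tracking the descent data along the identified points.
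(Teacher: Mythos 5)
Your proposal misreads the role of the deformation, which is the heart of the argument $(ii)\Rightarrow(i),(iii)$ and the one genuinely new ingredient of the paper. You write that ``one does not expect the obstructions to vanish on the nose for the given complex structure, but rather one deforms the complex structure of $X_Z$ ... to kill them, then transports the resulting Levi-flat foliated neighborhood back.'' This is not what happens and cannot be made to work. First, the Ueda classes $u_m(Y_Z,X_Z)$ \emph{do} vanish for the given $Z$: the pair $(Y_Z, X_Z)$ is always of infinite type here (this is exactly Assumption~$5$ and is checked in \S\ref{section:setting_nodal}), so the formal solution $\widehat{w}_j$ of the functional equation always exists. The obstruction is not to vanishing but to \emph{convergence}: one has to estimate the coefficients $F_{j,m}$, and the single-fiber estimate of Ueda's Lemma~4 gives $K\cdot |1-\alpha(N_Z)^m|^{-1}$, which diverges too fast when $\alpha(N_Z)$ is a Liouville-type number. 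Second, you cannot ``transport a Levi-flat neighborhood back'': if you move $Z$ to a different configuration $Z'$, then $X_{Z'}$ is a different complex manifold and the foliation on $X_{Z'}$ is not a holomorphic object on $X_Z$. What the paper actually does (Theorem~\ref{thm:main_general} and \S\ref{section:prf_main_thm}) is to fit $(X_Z,Y_Z)$ as the fiber over $s\in\mathrm{U}(1)$ of a family $\pi\colon\mathcal{X}\to S$ over a full two-complex-dimensional neighborhood $S\subset\mathbb{C}^*$ of $\mathrm{U}(1)$, and then solve the linearizing functional equation \emph{simultaneously} over $S$: holomorphy of $F^{(\lambda)}_{j,a}$ in the parameter $s$ and the maximum principle on the little Stein sets $W_{m,\nu}$ in $S$ replace the small-divisor estimate by a uniform constant (Proposition~\ref{prop:main_estim}). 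The deformation is used for this analytic improvement, not to relocate to a better point; indeed if you only moved along $\mathrm{U}(1)$ (a real curve), you would lose the maximum principle entirely, so ``keeping $\alpha(N_Z)$ fixed on the unit circle'' is precisely the wrong constraint.

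Two secondary points. Your implication $(iii)\Rightarrow(i)$ is not established in the paper and is nontrivial to carry out directly: in the smooth case Brunella's argument [B] requires the additional hypothesis that $X_Z\setminus Y_Z$ contains no compact curve, and in the nodal case there is no off-the-shelf statement. The paper circumvents this by proving $(i)$ and $(iii)$ both directly from $(ii)$: once the unitary system $\{(V_j,\widehat{w}_j)\}$ with $|\widehat{w}_j/\widehat{w}_k|\equiv 1$ exists, $[Y_Z]$ is unitary flat on $V:=\bigcup_j V_j$, the function $\Phi:=\log|\widehat{w}_j|$ gives the pseudoflat system for $(iii)$, and the regularized-minimum construction of \S\ref{section:relation_sp_nbhd} gives the global smooth semi-positive metric for $(i)$; then $(iv)$ follows because the smooth curvature form differs from $[Y_Z]$. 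For the converse direction the paper treats all three negations at once via the strongly pseudoconcave neighborhood system and the [K3]/[K6] rigidity, which is cleaner than your proposed $(iv)\Rightarrow(ii)$ as a separate link. Finally, your heuristic in $(iv)\Rightarrow(ii)$ (``any such current must be supported on $Y_Z$'') is the right intuition but needs the pseudoconcavity/plurisubharmonic function argument to become a proof; left as stated it is a gap.
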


Note that, 
when $K_{X_Z}^{-1}$ is nef and $Y_Z$ has a singular point which is not a node (i.e. $Y_Z$ is a curve with a cusp or three lines intersecting at a point), 
it follows from the argument in the proof of \cite[Proposition 7.10]{K6} that the assertions $(i), (iii)$, and $(iv)$ in Theorem \ref{thm:main_9pt_b-up_of_P^2_nodal} are equivalent. 
Note also that, under the assumption in the theorem above, it is known that $Y_Z$ admits a strongly pseudoconcave neighborhoods system when $N_Z$ is {\bf not} unitary flat (i.e. $|\alpha(N_Z)|\not=1$, \cite{U91} for a rational curve with a node, \cite[Theorem 1.6]{K6} in general). 
It is shown by Brunella that 
the assertions $(i)$ and $(iii)$ in Theorem \ref{thm:main_9pt_b-up_of_P^2_nodal} are equivalent to each other when $Y_Z$ is non-singular and $X_Z\setminus Y_Z$ does not contain any compact complex curve \cite[Theorem 1 (i)]{B}. 
Therefore one can regard Theorem \ref{thm:main_9pt_b-up_of_P^2_nodal} as a singular analogue of Brunella's theorem. 

Let us add some explanation on known results on the semi-positivity of $K_{X_Z}^{-1}$ when $Y_Z$ is a cycle of rational curves. 
As is easily observed, $K_{X_Z}^{-1}$ is semi-ample if and only if $K_{X_Z}^{-1}|_{Y_Z}$ is torsion in ${\rm Pic}^0(Y_Z)$ (i.e. $\alpha(N_Z)=e^{2\pi\sqrt{-1}\theta}$ for some rational number $\theta$). 
In  this case, $K_{X_Z}^{-1}$ is semi-positive by a standard argument (see \S \ref{section:relation_sp_nbhd}). 
In \cite{K6}, we showed that the conditions $(i), (iii)$ and $(iv)$ in Theorem \ref{thm:main_9pt_b-up_of_P^2_nodal} hold if $\alpha(N_Z)=e^{2\pi\sqrt{-1}\theta}$ holds for some Diophantine irrational number $\theta$ \cite[Theorem 1.4, Corollary 7.5]{K6}. 
Here we say that an irrational number $\theta$ is {\it Diophantine} if there exist positive numbers $A$ and $\alpha$ such that $\min_{n\in\mathbb{Z}}|n-m\theta|\geq A\cdot m^{-\alpha}$. 
For example, any algebraic irrational number is Diophantine according to Liouville's theorem. 
By combining \cite[Theorem 1.6 $(ii)$]{K6} and the argument in the proof of \cite[Theorem 1.1]{K3} that $K_{X_Z}^{-1}$ is not semi-positive if $|\alpha(N_Z)|\not=1$. 
Therefore, our contribution in the present paper is in the case where $\alpha(N_Z)=e^{2\pi\sqrt{-1}\theta}$ holds for some non-Diophantine irrational real number $\theta$. 

Next, let us describe our main result for $(X_Z, Y_Z)$ when $Y_Z$ is a smooth elliptic curve. 
In this case, any topologically trivial line bundle $L$ on $Y_Z$ admits a unitary flat structure  (see \cite[\S 1.1]{U} for example). 
Denote 
by $\rho_L\colon \pi_1(Y_Z, *)\to \mathrm{U}(1)$ the the unitary representation of the fundamental group which corresponds to $L$: i.e. $\rho_L$ is the one obtained by considering the holonomy of $\mathcal{F}_L$ along the zero section, where $\mathrm{U}(1):=\{t\in \mathbb{C}\mid |t|=1\}$ and $\mathcal{F}_L$ is the foliation of the total space of $L$ which corresponds to the connection defined by a flat metric (see \cite[\S 2.1]{K8} for the detail). 
Denote by ${\rm rank}(L)$ the rank of the image of $\rho_L$ as a finitely generated abelian group. 
Note that ${\rm rank}(L)=0$ if and only if $L$ is torsion in ${\rm Pic}^0(Y_Z)$ (i.e. there is a positive integer $m$ such that $L^{\otimes m}$ is holomorphically trivial), 
that ${\rm rank}(L)=1$ if and only if any leaf of $\mathcal{F}_L$ is biholomorphic to $\mathbb{C}^*$ except for the zero section, 
and that ${\rm rank}(L)=2$ if and only if any leaf of $\mathcal{F}_L$ is biholomorphic to $\mathbb{C}$ except for the zero section. 
Our main result in the case where $Y_Z$ is smooth is the following: 

\begin{theorem}\label{thm:main_9pt_b-up_of_P^2_smooth}
Let $(X_Z, Y_Z)$ be as above. 
Assume that $Y_Z$ is smooth and ${\rm rank}(N_Z)<2$, where $N_Z:=K_{X_Z}^{-1}|_{Y_Z}$. 
Then $K_{X_Z}^{-1}$ is semi-positive, and $Y_Z$ admits a pseudoflat neighborhoods system. 
\end{theorem}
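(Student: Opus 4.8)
The plan is to separate the cases ${\rm rank}(N_Z)=0$ and ${\rm rank}(N_Z)=1$, the first being essentially classical. If ${\rm rank}(N_Z)=0$ then $N_Z$ is torsion of some order $m$, so $\mathcal{O}_{Y_Z}(-mK_{X_Z})\cong\mathcal{O}_{Y_Z}$; feeding this into $0\to \mathcal{O}_{X_Z}(-(m-1)K_{X_Z})\to \mathcal{O}_{X_Z}(-mK_{X_Z})\to \mathcal{O}_{Y_Z}\to 0$ and using $\chi(-kK_{X_Z})=1$ and $h^2(-kK_{X_Z})=h^0((k+1)K_{X_Z})=0$ for $k\geq 0$ gives $h^0(-mK_{X_Z})\geq 2$. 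Since $K_{X_Z}^{-1}$ is nef with $(K_{X_Z}^{-1})^2=0$, the resulting pencil in $|K_{X_Z}^{-m}|$ is base-point free and defines an elliptic fibration $X_Z\to\mathbb{P}^1$ having $Y_Z$ as (the support of) a fibre, i.e. a Halphen pencil. Hence $K_{X_Z}^{-1}$ is semi-ample, so semi-positive, and the preimages of small disks about the image of $Y_Z$ form a pseudoflat neighborhoods system. From now on I assume ${\rm rank}(N_Z)=1$.

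For ${\rm rank}(N_Z)=1$ it suffices to produce a pseudoflat neighborhoods system of $Y_Z$: semi-positivity of $K_{X_Z}^{-1}$ then follows by gluing a flat metric of $N_Z=K_{X_Z}^{-1}|_{Y_Z}$ on a linearized neighborhood of $Y_Z$ to a metric of $K_{X_Z}^{-1}$ on the complement, using the nefness of $K_{X_Z}^{-1}$, by the arguments of \S\ref{section:relation_sp_nbhd}. To build the neighborhood I would run a modified form of Ueda's construction \cite{U}. Since $N_Z$ is non-torsion, $N_Z^{-n}$ is a non-trivial degree-zero line bundle on the elliptic curve $Y_Z$, so $H^1(Y_Z,N_Z^{-n})=0$ for all $n\geq 1$; hence every Ueda class $u_n(Y_Z,X_Z)$ vanishes and a formal linearization exists. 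Next I would pass to the infinite cyclic covering $\pi\colon\widehat{Y_Z}\to Y_Z$ attached to the rank-one subgroup $\ker(\rho_{N_Z}\colon\pi_1(Y_Z)\to\mathrm{U}(1))$ (after a finite unramified base change, if necessary, to make this quotient infinite cyclic, which is harmless for the assertions in question). Then $\widehat{Y_Z}\cong\mathbb{C}^*$, the bundle $\pi^*N_Z$ is holomorphically trivial, and, realizing $\pi$ as a covering $\widehat{W}\to W$ of a tubular neighborhood $W$ of $Y_Z$, a generator of the deck group acts on $\widehat{Y_Z}=\mathbb{C}^*$ by $w\mapsto\mu w$ with $|\mu|\neq 1$ and in the normal direction by multiplication by $\lambda:=\rho_{N_Z}(\gamma)\in\mathrm{U}(1)$; this is the Tate-curve picture of $(X_Z,Y_Z)$.

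On $\widehat{W}$ one can remove, by a convergent change of the fibre coordinate $\zeta$, all Fourier modes $w^m$ with $m\neq 0$ from the nonlinear part of the deck transformation: at the $n$-th step the relevant homological equation carries divisors of the form $\lambda^{n-1}\mu^m-1$, and for $m\neq 0$ these are bounded away from $0$, precisely because $|\mu|\neq 1$, so no small divisors occur and the procedure converges on a fixed annular neighborhood. This realizes $(\widehat{W},\widehat{Y_Z})$ as a (possibly non-linear) disk bundle over $Y_Z$ with a single clutching datum, a one-variable germ $g(\zeta)=\lambda\zeta+O(\zeta^2)$ independent of $w$, and the remaining task is to linearize $g$. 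This is the crucial point, and the only place where the hypothesis ${\rm rank}(N_Z)<2$ is genuinely used: the naive iteration meets a small-divisor obstruction governed by $|\lambda^{n-1}-1|$, which need not satisfy any Diophantine-type lower bound, so Ueda's original argument does not cover it.

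To defeat this small divisor I would follow the paper's strategy of deforming the complex structure. Because ${\rm rank}(N_Z)\leq 1$, the point $N_Z$ lies on a real one-dimensional subtorus $S\subset{\rm Pic}^0(Y_Z)$, on which torsion points are dense; one can therefore embed $(X_Z,Y_Z)$ in a real-analytic family $(X_s,Y_s)$ of pairs of the same type, e.g. by moving the nine points along the fixed cubic $Y_Z$ within the locus where $K_{X_s}^{-1}$ stays nef, along which $N_{Y_s/X_s}$ runs through $S$. At the dense set of torsion parameters the neighborhood is already linearized by the first case, and, carrying out the normalization of $g_s$ uniformly in $s$, the divisors $\lambda_s^{n-1}-1$ that are small for a given $s$ are not small for nearby parameters; exploiting this, i.e. solving the homological equations in the deformed family rather than on the single fibre and then specializing, should yield a coordinate change that is convergent at $s=0$ as well. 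Once $g$ is linearized, $(W,Y_Z)$ is biholomorphic to a neighborhood of the zero section in the unitary flat bundle $N_Z$, the level sets $\{|\zeta|=\varepsilon\}$ descend to a pseudoflat neighborhoods system of $Y_Z$, and the gluing argument above finishes the proof. I expect the genuine difficulty to be exactly this last step: making precise how deforming within the family of rank $\leq 1$ normal bundles absorbs the small divisors, and controlling the limit $s\to 0$.
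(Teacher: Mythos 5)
Your plan correctly splits into the torsion case (Halphen pencil, classical) and the ${\rm rank}(N_Z)=1$ case, and you correctly identify the paper's governing idea: deform the complex structure so that the normal bundle $N_{Y_s/X_s}$ traverses a one-dimensional subtorus of ${\rm Pic}^0(Y_Z)$ on which torsion points are dense, and exploit that density to tame the small divisors. Your route toward that goal is genuinely different from the paper's, however. Rather than passing to the infinite cyclic covering and reducing to the linearization of a single one-variable germ $g(\zeta)=\lambda\zeta+O(\zeta^2)$ (a Tate/Arnol'd picture, which the paper only alludes to as a toy model in Remark~\ref{rmk:CLPT} and \S\ref{section:toy_model_single}), the paper stays global: it works directly with a system of local defining functions $\{w_j\}$ around $Y_s$ in the total space of a family $\pi\colon\mathcal X\to S$ and inductively solves the Ueda-type functional equation $w_j=\widehat w_j+\sum_{m\ge 2}F_{j,m}\widehat w_j^m$ with coefficients $F_{j,m}$ holomorphic in both the curve variable and the family parameter $s$ (\S\ref{section:prf_main_thm}).

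The genuine gap is the step you yourself flag as the ``genuine difficulty.'' You propose to carry out the normalization uniformly over a \emph{real-analytic} family parametrized by the \emph{real} one-dimensional subtorus, and to conclude that ``divisors small for a given $s$ are not small for nearby parameters, hence the coordinate change converges at $s=0$.'' As stated, this does not close: a pointwise lower bound on $|\lambda_s^{n-1}-1|$ for nearby $s$ does not by itself control the solution at the fixed parameter $s=0$, precisely because a small-divisor estimate is lost each time $\lambda_0^{n-1}$ comes close to $1$. The mechanism the paper uses to convert density of torsion into a uniform-in-$m$ bound is the \emph{maximum principle in the deformation parameter}, and for that the family must be \emph{holomorphic} over a \emph{complex} one-dimensional base $S\subset\mathbb C^*$ thickening $\mathrm U(1)$. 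Concretely (Proposition~\ref{prop:main_estim} and the estimates $(\ref{eq_estim_on_W_Ls0hol_non_triv_case})$--$(\ref{eq_estim_on_W_Ls0hol_triv_case})$): on the small complex region $W_{m,\nu}$ around the $m$-torsion point $s_0=\zeta_{M_0m}^\nu$, one shows (using the infinite-type hypothesis at torsion parameters, your {\bf Condition $(*)$} in the toy model) that the obstruction $1$-cocycle $\gamma$ vanishes at $\xi=\xi_0$, so that $\gamma_{jk}/(\xi-\xi_0)$ extends holomorphically across $\xi_0$; the maximum principle then pushes the supremum out to $\partial W_{m,\nu}$, which is at distance comparable to $1/m$ from $\xi_0$, cancelling the factor $1/m$ lost to the small divisor and producing a bound independent of $m$. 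That holomorphic extension and boundary estimate is the key lemma absent from your write-up; without it, neither your reduction to the one-variable germ $g$ nor a direct Ueda scheme converges in the non-Diophantine regime. Secondarily, you would also need to verify that your preliminary step (killing the $w^m$, $m\ne 0$, Fourier modes) can be performed with estimates uniform in $s$ and that the resulting linearization descends from $\widehat W$ to $W$ compatibly with the deck action; these points are glossed over but are of a more routine nature than the missing maximum-principle lemma.
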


Note that, as is classically known, $K_{X_Z}^{-1}$ is semi-ample if and only if ${\rm rank}(N_Z)=0$. As $K_{X_Z}^{-1}$ is semi-positive in this case, we are interested in the case where ${\rm rank}(N_Z)>0$ (i.e. $N_Z$ is non-torsion). 
It follows from the studies of Arnol'd, Ueda, and Brunella \cite{A}, \cite{U}, \cite{B} that 
$K_{X_Z}^{-1}$ is semi-positive if $N_Z$ is Diophantine. 
Here we say that a topologically trivial line bundle $L$ on $Y_Z$ is {\it Diophantine} if there exist positive numbers $A$ and $\alpha$ such that $d_{{\rm Pic}^0(Y_Z)}(\mathbb{I}_Z, N_Z^{\otimes m})\geq A\cdot m^{-\alpha}$, where $\mathbb{I}_Z$ is the holomorphically trivial line bundle on $Y_Z$ and $d_{{\rm Pic}^0(Y_Z)}$ is an invariant distance on ${\rm Pic}^0(Y_Z)$ (see \cite[\S 4.1]{U}). 
Therefore, our contribution in the present paper when $Y_Z$ is smooth is in the case where ${\rm rank}(N_Z)=1$ and $N_Z$ is not Diophantine. 

Let us briefly explain the idea how to construct a $C^\infty$ Hermitian metric with semi-positive curvature under the condition $(ii)$ of Theorem \ref{thm:main_9pt_b-up_of_P^2_nodal}, or the assumption of Theorem \ref{thm:main_9pt_b-up_of_P^2_smooth}. 
Based on an argument in the proof of \cite[Theorem 1]{B}, or the argument which we described in \cite[\S 5]{K8} (see also \S \ref{section:relation_sp_nbhd}), the problem is reduced to show the existence of an open covering $\{V_j\}_j$ of a neighborhood of $Y_Z$ and a system $\{\widehat{w}_j\}_j$ of holomorphic functions on $V_j$ such that 
$\widehat{w}_j$ is a defining function of $Y_Z\cap V_j$ 
and that $|\widehat{w}_j/\widehat{w}_k|\equiv 1$ holds on each $V_j\cap V_k$. 
Take a sufficiently fine covering $\{V_j\}$. 
Then, it follows from a simple argument that there exists a defining function $w_j$ on $V_j$ of $Y_Z\cap V_j$ for each $j$ such that $|w_j/w_k|\equiv 1$ holds on $Y_Z\cap V_j\cap V_k$. 
Following the strategy of \cite{A} or \cite[\S 4]{U}, we will modify $w_j$ by solving a functional equation in the form of 
\[
w_j=\widehat{w}_j+\sum_{m=2}^\infty F_{j, m}\cdot \widehat{w}_j^m
\]
on each $V_j$ (after shrinking $V_j$ suitably), where each $F_{j, m}$ is a suitably constructed holomorphic function on $V_j$. 
As is described in \cite[\S 4.2]{U} or \cite[\S 4.2.1]{K6}, one can actually construct coefficient functions $\{F_{j, m}\}$ inductively on $m$ in our setting (see also \cite{N}). 
Thus the problem is reduced to the $L^\infty$ estimate of each $A_m:=\max_j\sup_{V_j}|F_{j, m}|$ so that the formal power series $\sum_{m=2}^\infty A_m X^m\in \mathbb{C}[[X]]$ has a positive radius of convergence (then one can actually solve the functional equation above by using the implicit function theorem, see \S \ref{section:proof_outline} for details). 

For such purpose, one need to estimate the $L^\infty$ operator norm of the coboundary map 
$\delta\colon \check{C}^0(\{U_j\}, \mathcal{O}_{Y_Z}(N_Z^{-m}))\to \check{C}^1(\{U_j\}, \mathcal{O}_{Y_Z}(N_Z^{-m}))$ of the \v{C}ech cocycles, where $U_j:=Y_Z\cap V_j$ and $N_Z^{-m}$ is the $m$-th tensor power of the dual line bundle $N_Z^{-1}$ of $N_Z$. 
According to Ueda's lemma \cite[Lemma 4]{U} or its singular analogue we will describe as Lemma \ref{lem:ueda_4_new_nodal}, the operator norm of the coboundary map can be estimated from above by 
$K\cdot |1-\alpha(N_Z^m)|^{-1}$ when $Y_Z$ is a cycle of rational curves, 
and by $K\cdot (d_{{\rm Pic}^0(Y_Z)}(\mathbb{I}_Z, N_Z^{\otimes m}))^{-1}$ when $Y_Z$ is smooth, where $K$ is a constant which does not depend on $m$. 
This type of estimate is not enough for the case where $N_Z$ can be ``too-well" approximated by a sequences of torsion line bundles (for example when $\alpha(N_Z^m)=e^{2\pi\sqrt{-1}\theta}$ for some Liouville number $\theta\in \mathbb{R}\setminus\mathbb{Q}$). 
In order to overcome this difficulty, 
we alternatively 
consider a deformation family $\pi\colon \mathcal{X}\to S$ such that each fiber is isomorphic to $X_Z$ for some nine points configuration $Z$. 
Then, by using the maximum principle suitably, we can improve the estimate in our situation (see also \S \ref{section:our_strategy_in_toy_model}). 

The organization of the paper is as follows. 
In \S 2, we will review the relationship between neighborhood theories and the semi-positivity of nef line bundles. Here we will also review Ueda theory, give an explanation on the original proof of Ueda's theorem, and explain our basic strategy to prove the main theorems. 
In \S 3, we describe a generalized configuration and state our main result as Theorem \ref{thm:main_general}. 
In \S 4, we give two more concrete configurations as examples of the generalized configuration we describe in \S 3. 
In \S 5, we prove Theorem \ref{thm:main_general}. 
In \S 6,  we give some examples and prove Theorem \ref{thm:main_9pt_b-up_of_P^2_nodal} and \ref{thm:main_9pt_b-up_of_P^2_smooth}. 
\vskip3mm
{\bf Acknowledgment. } 
This work was supported by JSPS Grant-in-Aid for Research Activity Start-up 
18H05834, 
by MEXT Grant-in-Aid for Leading Initiative for Excellent Young Researchers（LEADER) No. J171000201,  
and partly by Osaka City University Advanced Mathematical Institute (MEXT Joint Usage/Research Center on Mathematics and Theoretical Physics).

\section{Preliminaries}

\subsection{Semi-positivity of a nef line bundles and a neighborhood of the stable base locus}\label{section:relation_sp_nbhd}

Let $X$ be a complex manifold and $L$ be a holomorphic line bundle on $X$. 
For a positive integer $m$, we denote by $L^m$ the $m$-th tensor power $L^{\otimes m}$ and by $L^{-m}$ the $m$-th tensor power $(L^*)^{\otimes m}$ of the dual bundle $L^*$ of $L$. 
A line bundle $L$ is said to be {\it semi-ample} if $\mathcal{O}_X(L^m)$ is globally generated for some positive integer $m$. 
If $L$ is semi-positive, then $L$ is semi-positive. 
Indeed, $h:=\Phi_{|L^m|}^*h_{\rm FS}$ is a $C^\infty$ Hermitian metric on $L^m$ with $\sqrt{-1}\Theta_h\geq 0$, where $m$ is a positive integer such that $\mathcal{O}_X(L^m)$ is globally generated, $\Phi_{|L^m|}\colon X\to \mathbb{P}(H^0(X, \mathcal{O}_X(L^m)))$ is the map defined by the complete linear system $|L^m|$, and $h_{\rm FS}$ is Fubini-Study metric on 
$\mathcal{O}_{\mathbb{P}(H^0(X, \mathcal{O}_X(L^m)))}(1)$. 
By considering the metric $h^{1/m}$, one can attach a $C^\infty$ Hermitian metric on $L$ with semi-positive curvature. 

In what follows, we drop the condition that $X$ is projective and assume that $X$ is just a complex manifold. 
A holomorphic line bundle $L$ on $X$ is said to be {\it effective} if $H^0(X, \mathcal{O}_X(L))\not=0$. 
When $L$ is effective, there exists a non-zero global section $f\in H^0(X, \mathcal{O}_X(L))$. 
Let $D$ be the corresponding divisor: $D:={\rm div}(f)$. 
Note that $L\cong [D]$ and $\mathcal{O}_X(L)=\mathcal{O}_X(D)$ hold, where $[D]$ is the holomorphic line bundle on $X$ which corresponds to the divisor $D$. 
In this case, the current $T_D$ defined by 
$\langle T_D, \varphi\rangle:=\int_D\varphi$ (for a compactly supported $C^\infty$ form $\varphi$) is a closed semi-positive $(1, 1)$-current such that $T_D\in c_1(L)$. 
Therefore we have that  $\{T\in c_1(K_{X_Z}^{-1})\mid T: \text{closed semi-positive}\ (1, 1)-\text{current}\}$ is not empty. 
Note that this set is not a singleton if $L$ is semi-positive, since $\sqrt{-1}\Theta_h\in c_1(L)$ is also a closed semi-positive $(1, 1)$-current for any $C^\infty$ Hermitian metric $h$ of $L$ with semi-positive curvature. 

We say that $L$ is {\it $\mathbb{C}^*$-flat}, or $L$ {\it admits a flat connection}, if $L$ is an element of the image of the natural map 
$H^1(X, \mathbb{C}^*)\to H^1(X, \mathcal{O}_X^*)$, or equivalently, all the transition functions are taken as constant functions valued in $\mathbb{C}^*$ for a suitable choice of local trivializations. 
We say that $L$ is {\it unitary flat} if $L$ is an element of the image of the natural map 
$H^1(X, \mathrm{U}(1))\to H^1(X, \mathcal{O}_X^*)$, or equivalently, all the transition functions are taken as constant functions valued in $\mathrm{U}(1)$ for a suitable choice of local trivializations. 
If $L$ is unitary flat, then $L$ is semi-positive. 
Actually one can easily construct a $C^\infty$ Hermitian metric $h$ on a unitary flat line bundle $L$ such that $\sqrt{-1}\Theta_h\equiv 0$. 
Note that any topologically trivial holomorphic line bundle on a compact K\"ahler manifold is unitary flat (see \cite[\S 1.1]{U}). 

Let $Y$ be a reduced divisor of $X$. 
In this paper, we denote by $N_{Y/X}$ the line bundle $[Y]|_Y:=i_Y^*[Y]$ on $Y$, where $i_Y\colon Y\to X$ is the inclusion. 
As $N_{Y/X}$ coincides with the holomorphic normal bundle of $Y$ when $Y$ is non-singular, we call this bundle the normal bundle of $Y$ in general. 
Even when $N_{Y/X}$ is unitary flat, it may possible that $L:=[Y]$ is not semi-positive. 
Indeed, \cite[Example 1.7]{DPS} gives such an example. 

Assume that there exists a neighborhood $V$ of $Y$ in $X$ such that $L|_V$ is unitary flat, where $L:=[Y]$. 
In this case, $L|_V$ admits a $C^\infty$ Hermitian metric $h_V$ with semi-positive curvature. 
On the other hand, $L$ also admits a singular Hermitian metric $h_{\rm sing}$ such that $h_{\rm sing}|_{X\setminus Y}$ is a $C^\infty$ Hermitian metric on $L|_{X\setminus Y}$, $h_{\rm sing}\to\infty$ holds when a point approaches to $Y$, and that Chern curvature current of $h_{\rm sing}$ is semi-positive. 
Indeed, the singular Hermitian metric $h_{\rm sing}$ defined by $|f_Y|^2_{h_{\rm sing}}\equiv 1$ enjoys these properties, where $f_Y\in H^0(X, \mathcal{O}_X(Y))$ is a canonical section. 
A $C^\infty$ Hermitian metric $h$ on $L$ with semi-positive curvature can be constructed by using {\it the regularized minimum construction} for these two metrics $h_V$ and $h_{\rm sing}$, which is the same construction as we used for proving \cite[Corollary 3.4]{K2} (see also \cite[\S 5]{K8}). 
Here we briefly explain this construction. 
Fix a relatively compact open neighborhood $V_0$ of $Y$ in $V$. 
For a sufficiently large constant $C$, one can easily see that the continuous Hermitian metric $h$ on $L$ defined by 
\[
h_x:=\begin{cases}
    \min\{C\cdot (h_V)_x,\ (h_{\rm sing})_x\} & \text{if}\ x\in V_0 \\
    (h_{\rm sing})_x & \text{if}\ \in X\setminus \overline{V_0}
  \end{cases}
\]
is well-defined, and that the curvature current $\sqrt{-1}\Theta_h$ is semi-positive. 
By replacing the function ``$\min$'' in the construction above with ``a regularized minimum function" (see \cite[\S 5.E]{Dbook}), one can make $h$ smooth. 

Again, let $X$ be a complex manifold and $Y$ be a reduced divisor of $X$. 
For a singular Hermitian metric $h$ on $L=[Y]$ with semi-positive curvature current, 
one can construct a plurisubharmonic function $\Phi_h$ on $X\setminus Y$ by $\Phi_h :=-\log |f_Y|_h^2$, where $f_Y\in H^0(X, \mathcal{O}_X(Y))$ is a canonical section. 
As $\Phi_h(x)=O(-\log d(Y, x))$ as $x$ approaches to $Y$ if $h$ is smooth,  one can prove the non semi-positivity of $L$ if one can deny the existence of such a plurisubharmonic function on $X\setminus Y$, which is the strategy of the proof of the main theorem in \cite{K3}. 

For investigating the flatness of $L|_V$ or the plurisubharmonic function on $V\setminus Y$ for a neighborhood $V$ of $Y$, we apply Ueda theory \cite{U}, \cite{U91} and its analogues \cite{K5}, \cite{K6}, \cite{K8}, which will be reviewed in the next subsection. 

Motivated by the argument as above together with some results such as \cite[Theorem 1 $(i)$]{B} and Theorem \ref{thm:main_9pt_b-up_of_P^2_nodal}, we pose the following: 

\begin{conjecture}\label{conj:main}
Let $X$ be a projective manifold and $Y\subset X$ be a reduced subvariety of codimension $1$. 
Assume that the restriction $L|_Y (:=i_Y^*L)$ of the line bundle $L:=[Y]$ to $Y$ is unitary flat (or topologically trivial), where $i_Y\colon Y\to X$ is the inclusion. 
Then $L$ is semi-positive if and only if there exists a neighborhood $V$ of $Y$ such that $L|_V$ is semi-positive. 
\end{conjecture}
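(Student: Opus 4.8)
The plan is to adapt the regularized minimum construction recalled in \S \ref{section:relation_sp_nbhd}, the key observation being that it uses only the semi-positivity, and not the unitary flatness, of a local metric. One implication is trivial: if $L$ admits a $C^\infty$ metric $h$ with $\sqrt{-1}\Theta_h\ge 0$, then $h|_V$ exhibits $L|_V$ as semi-positive for every neighborhood $V$ of $Y$, so I would concentrate on the converse.

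For the converse, assume $V\supset Y$ is a neighborhood and $h_V$ is a $C^\infty$ metric on $L|_V$ with $\sqrt{-1}\Theta_{h_V}\ge 0$. Let $f_Y\in H^0(X,\mathcal{O}_X(Y))$ be a canonical section and let $h_{\rm sing}$ be the singular metric on $L=[Y]$ with $|f_Y|_{h_{\rm sing}}\equiv 1$; as recalled in \S \ref{section:relation_sp_nbhd}, $\sqrt{-1}\Theta_{h_{\rm sing}}$ is the closed semi-positive current of integration along $Y$, $h_{\rm sing}$ is $C^\infty$ with vanishing curvature on $X\setminus Y$, and $h_{\rm sing}\to\infty$ as one approaches $Y$ while $h_V$ stays bounded there. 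Fix a relatively compact open set $V_0$ with $Y\subset V_0\Subset V$. Near the compact set $\partial V_0$, which is disjoint from $Y$, the positive continuous function $h_{\rm sing}/h_V$ is bounded, so for a sufficiently large constant $C$ one has $h_{\rm sing}<C\cdot h_V$ there, whereas $C\cdot h_V<h_{\rm sing}$ near $Y$. Gluing $\min\{C\cdot h_V,\,h_{\rm sing}\}$ on $V_0$ to $h_{\rm sing}$ on $X\setminus\overline{V_0}$ then produces a globally well defined continuous metric on $L$ which coincides with $C\cdot h_V$ near $Y$, hence is $C^\infty$ there, and whose curvature current is semi-positive, since a pointwise minimum of Hermitian metrics with semi-positive curvature current again has semi-positive curvature current (locally it corresponds to a maximum of plurisubharmonic functions). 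Smoothing by a regularized minimum (\cite[\S 5.E]{Dbook}) then yields a $C^\infty$ metric $h$ on $L$ with $\sqrt{-1}\Theta_h\ge 0$, which proves that $L$ is semi-positive.

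The step I expect to be the main obstacle is carrying out this gluing rigorously when $Y$ is an arbitrary (possibly reducible and non-normal) reduced hypersurface, along which $h_{\rm sing}$ is singular: one has to check that the regularized minimum of $C\cdot h_V$ and $h_{\rm sing}$ is $C^\infty$ on the locus where neither metric dominates the other, that this locus is a compact subset of $V_0\setminus Y$ on which both of the corresponding local weights are plurisubharmonic (plurisubharmonic for $h_V$ by hypothesis, pluriharmonic away from $Y$ for $h_{\rm sing}$), and that the regularization does not destroy semi-positivity of the curvature. I would also note that the hypothesis ``$L|_Y$ unitary flat (or topologically trivial)'' is not needed for this direction of the argument; its role is rather to make the accompanying problem --- deciding whether $L$ admits a semi-positive metric on a neighborhood of $Y$ in the first place --- both meaningful and tractable through Ueda theory, as Theorem \ref{thm:main_9pt_b-up_of_P^2_nodal} illustrates.
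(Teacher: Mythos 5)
The statement is Conjecture~\ref{conj:main}; the paper poses it as open and contains no proof of it, so there is no argument of the author's to compare against. What you give is the regularized-minimum construction recalled in \S\ref{section:relation_sp_nbhd}, where the paper carries it out under the stronger hypothesis that $L|_V$ be unitary flat, together with the (correct) observation that unitary flatness is never actually used there: the construction only needs the local weight $\phi_V$ of $h_V$ to be plurisubharmonic, so that $\max(\phi_V-\log C,\phi_{\rm sing})$ is plurisubharmonic; the transition locus where neither term dominates lies in a compact subset of $V_0\setminus Y$ on which both weights are smooth and plurisubharmonic; the regularized maximum is again smooth and plurisubharmonic there; and the resulting metric equals $C\,h_V$ near $Y$ and $h_{\rm sing}$ near and beyond $\partial V_0$, so it patches with $h_{\rm sing}$ on $X\setminus\overline{V_0}$. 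You are also right that the hypothesis ``$L|_Y$ unitary flat (or topologically trivial)'' plays no role in this implication. I do not find a gap in the argument as written. That said, the author chose to state this as an open conjecture rather than as a proposition, which is a strong signal that something may be subtler than it looks; before treating the matter as settled, re-examine the existence and uniformity of the constant $C$, the well-definedness of the glued metric across $\partial V_0$, and the smoothness and plurisubharmonicity of the regularized maximum in the transition zone, with the skepticism that the ``conjecture'' label calls for.
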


Note that, if the conjecture above is affirmative, then $L=[Y]$ is not semi-positive when $Y$ is a smooth compact curve and the pair $(Y, X)$ is of type $(\gamma)$ (see \S \ref{section:review_ueda_theory} for the definition), which is actually the case for some examples as we showed in \cite{K7} and \cite{KO2}. 

\subsection{Short review for Ueda theory and its analogues}\label{section:review_ueda_theory}

Let $X$ be a complex manifold and $Y\subset X$ be a holomorphically embedded compact complex subvariety with topologically trivial normal bundle. 

In \cite{U}, Ueda investigated the complex analytic structure of a neighborhood of $Y$ when $X$ is surface and $Y$ be a smooth (i.e. non-singular) complex curve (see also \cite{N}). 
He defined the obstruction class $u_m(Y, X)\in H^1(Y, \mathcal{O}_X(N_{Y/X}^{-m}))$ ($m\geq 1$), whose definition will be explained below in a generalized configuration. 
In broad strokes, he defined the obstruction classes by comparing $[Y]|_V$ and $\widetilde{N}$ on a small tubular neighborhood $V$ of $Y$, where $\widetilde{N}$ is the flat extension of $N_{Y/X}$ (i.e. $\widetilde{N}$ is the unitary flat line bundle on $V$ which corresponds to $N_{Y/X}$ via the natural isomorphism $H^1(V, \mathrm{U}(1))\to H^1(Y, \mathrm{U}(1))$). 
The $m$-th obstruction class $u_m(Y, X)$, which is called {\it $m$-th Ueda class}, reflects the difference between $[Y]|_V$ and $\widetilde{N}$ in $m$-th order jet along $Y$. 
By using Ueda classes, he classified the pair $(Y, X)$ into the following four cases: 
$(\alpha)$: The case where $u_m(Y, X)\not=0$ for some $m\geq 0$, which means that $[Y]|_V$ and $\widetilde{N}$ are different to each other in a finite order jet along $Y$. In this case, the pair $(Y, X)$ is said to be of {\it finite type}. 
Otherwise, the pair is said to be of {\it infinite type}. 
$(\beta')$: The case where there exists a proper holomorphic map $\pi\colon V\to \mathbb{D}$ onto the unit disk $\mathbb{D}:=\{w\in\mathbb{C}\mid |w|<1\}$ by shrinking $V$ if necessary such that $\pi^*\{0\}=mY$ holds as divisors for some  positive integer $m$. 
$(\beta'')$: The case where $[Y]|_V$ is non-torsion and unitary flat by shrinking $V$ if necessary. In these two cases, $Y$ admits a pseudoflat fundamental neighborhoods system. 
$(\gamma)$: The remaining case. 

Ueda showed that $Y$ admits a strongly pseudoconcave neighborhoods system if $(Y, X)$ is of type $(\alpha)$ \cite[Theorem 1]{U}. In this case, he investigated the detailed grouth properties of a plurisubharmonic function defined on $V\setminus Y$ \cite[Theorem 2]{U}. 
He also established a singular analogue of these theorems for the case where $Y$ is a rational curve with a node and $N_{Y/X}$ is not unitary flat \cite{U91}, which was slightly generalized by the author to, for example, the case where $Y$ is a cycle of rational curves \cite[Theorem 1.6]{K6}. By combining these and the argument we explained in the previous subsection, one obtain many examples of nef line bundles which are not semi-positive (see \cite{K3}, \cite[Corollary 1.3]{K6}). 
In the infinite type case, Ueda gave a sufficient condition for the pair $(Y, X)$ to be of type $(\beta')$ or $(\beta'')$ \cite[Theorem 3]{U}, whose proof also makes sense even when $Y$ is a manifold of higher dimension. 
Note that \cite[Theorem 3]{U} can be regarded as a generalization of Arnol'd's theorem \cite{A}. 
The following theorem is a generalized variant of Ueda's theorem: 

\begin{theorem}[{\cite[Theorem 3]{U}, \cite[Theorem 1.4]{K6}}, see also {\cite[Theorem 1.1]{K8}}]\label{thm:ueda_thm_for_infin_type}
Let $X$ be a complex manifold and $Y\subset X$ be a compact reduced subvariety of codimension $1$ such that the normal bundle is unitary flat. 
Assume one of the following two conditions: $Y$ is non-singular, or $Y$ is a cycle of rational curves. 
Then there exists a neighborhood $V$ of $Y$ such that the line bundle $[Y]|_V$ is unitary flat if the pair $(Y, X)$ is of infinite type and the norml bundle $N_{Y/X}:=[Y]|_Y$ is either torsion or Diophantine. 
\end{theorem}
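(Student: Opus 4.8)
The statement to prove is Theorem~\ref{thm:ueda_thm_for_infin_type}, which asserts that for a compact reduced hypersurface $Y\subset X$ (non-singular or a cycle of rational curves) with unitary flat normal bundle $N_{Y/X}$, if the pair $(Y,X)$ is of infinite type and $N_{Y/X}$ is torsion or Diophantine, then $[Y]|_V$ is unitary flat on a neighborhood $V$ of $Y$. The plan is to run the classical Ueda construction following \cite[Theorem 3]{U} (and its singular extension in \cite{K6}): one inductively chooses a system of local defining functions $\{w_j\}$ of $Y\cap V_j$ for a fine cover $\{V_j\}$ of a neighborhood of $Y$, normalized so that the transition functions $t_{jk}=w_j/w_k$ restricted to $Y$ are unitary constants, and then modifies them order by order in the jet along $Y$ so that eventually $|w_j/w_k|\equiv 1$ on overlaps. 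The obstruction to improving the normalization from $m$-th order to $(m{+}1)$-st order is exactly the $m$-th Ueda class $u_m(Y,X)\in H^1(Y,\mathcal O_Y(N_{Y/X}^{-m}))$; since $(Y,X)$ is of infinite type, all $u_m$ vanish, so at each stage the obstruction cocycle is a coboundary and the modification can be carried out.

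The serious point, as always in Ueda theory, is quantitative control of the modifications, so that the formal corrections assemble into a genuinely convergent object on a fixed (possibly shrunk) neighborhood. First I would fix the holomorphically flat extension $\widetilde N$ of $N_{Y/X}$ and measure the discrepancy between $[Y]|_V$ and $\widetilde N$ by a sequence of $1$-cochains; solving $u_m=0$ requires inverting the \v{C}ech coboundary $\delta\colon \check C^0(\{U_j\},\mathcal O_Y(N_{Y/X}^{-m}))\to \check C^1(\{U_j\},\mathcal O_Y(N_{Y/X}^{-m}))$ on the space of coboundaries, with an operator-norm bound. This is where Ueda's lemma (\cite[Lemma~4]{U}, resp.\ its singular analogue) enters: it gives $\|\delta^{-1}\|\le K\cdot d_{{\rm Pic}^0(Y)}(\mathbb I, N_{Y/X}^{\otimes m})^{-1}$ in the smooth case, resp.\ $\le K\cdot|1-\alpha(N_{Y/X}^m)|^{-1}$ for a cycle of rational curves, with $K$ independent of $m$. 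The Diophantine hypothesis on $N_{Y/X}$ converts this polynomially growing denominator into a polynomial bound $\|\delta^{-1}\|\le C m^{\alpha}$, and the torsion case is even easier since only finitely many residues $m$ (mod the torsion order) occur and for those $m$ with $N_{Y/X}^{\otimes m}=\mathbb I$ one uses $H^1(Y,\mathcal O_Y)$-coboundary splitting directly. Feeding these bounds into the standard majorant/geometric-series estimate shows that $A_m:=\max_j\sup_{V_j}|(\text{$m$-th correction})|$ grows at most geometrically, so the modified defining functions $\widehat w_j=w_j+\sum_{m\ge 2}F_{j,m}w_j^{m}$ converge on a fixed neighborhood and satisfy $|\widehat w_j/\widehat w_k|\equiv 1$; equivalently $[Y]|_V$ is unitary flat.

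\textbf{The main obstacle.} The genuinely delicate part is not the vanishing of the obstruction classes (that is the hypothesis) but the convergence: one must choose the shrinking of the $V_j$'s and the normalization of the $w_j$'s uniformly in $m$, and then show that the loss from $\|\delta^{-1}\|\lesssim m^{\alpha}$ (Diophantine) or from the finitely many torsion-denominators is absorbed by the geometric decay coming from working in the $m$-th order jet. In the cycle-of-rational-curves case there is the additional subtlety of handling the nodes: the cochain spaces and Ueda's lemma must be set up on the singular curve $Y$, using the analogue stated in the excerpt (Lemma~\ref{lem:ueda_4_new_nodal}), and one must verify that the local models near the nodes are compatible with the unitary normalization; this is exactly the singular refinement carried out in \cite{K6}. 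Apart from that, the argument is a bookkeeping-heavy but conceptually routine induction, and I would organize it as: (1) set up cochains and the flat extension; (2) inductive step via vanishing of $u_m$ plus the operator-norm bound; (3) geometric majorant and application of the implicit function theorem to pass from formal to convergent; (4) conclude unitary flatness of $[Y]|_V$.
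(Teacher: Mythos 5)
Your proposal is correct and follows essentially the same route the paper takes (and attributes to \cite[Theorem 3]{U} and \cite[Theorem 1.4]{K6}): inductive construction of the corrections $F_{j,m}$ via the vanishing of the Ueda classes, operator-norm control of $\delta^{-1}$ through Ueda's lemma combined with the torsion/Diophantine hypothesis, a Siegel-type majorant series, and the implicit function theorem to pass from the formal to the convergent solution — exactly the outline the paper gives in \S\ref{section:toy_model_single}. The only cosmetic difference is that you write the functional equation as $\widehat w_j = w_j + \sum F_{j,m}w_j^m$ rather than the paper's $w_j=\widehat w_j+\sum F_{j,m}\widehat w_j^m$, which is an equivalent inversion.
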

Here we say that a topologically trivial line bundle $L$ on a cycle of rational curve is Diophantine if $\alpha(N_{Y/X})=e^{2\pi\sqrt{-1}\theta}$ for a Diophantine number $\theta\in\mathbb{R}\setminus \mathbb{Q}$ (see \S 1 for the definition when $Y$ is non-singular). 
Especially, note that the pair $(Y, X)$ is of type $(\beta')$ if and only if it is of infinite type and $N_{Y/X}$ is a torsion element of ${\rm Pic}^0(Y)$. 
We will roughly review the strategy of the proof of Theorem \ref{thm:ueda_thm_for_infin_type} in the next subsection. 

In \cite{K5}, \cite{KO} and \cite{K8}, we investigated a higher codimensional analogue of Ueda theory. According to \cite{K6} and \cite{K8}, we explain the definition of a generalized variant of Ueda classes. 
Let $X$ be a complex manifold, and $Y\subset X$ be a compact reduced subvariety of codimension $r\geq 1$ such that $N_{Y/X}$ is unitary flat. 
Assume $Y$ is a cycle of rational curves and $r=1$ whenever $Y$ is singular for simplicity. 
Take a finite open covering $\{U_j\}$ of $Y$ and a neighborhood $V_j$ of $U_j$ in $X$. 
When $Y$ is singular, one may assume the following condition by refining them if necessary: 
$U_j\cap U_k\not=\emptyset$ and $U_k\cap Y_{\rm sing}\not=\emptyset$ imply $U_j\cap Y_{\rm sing}=\emptyset$. 
Take a defining function $w_j\colon V_j\to \mathbb{C}^r$ of $U_j$ for each $j$: 
i.e. $w_j$ is a holomorphic function on $V_j$ such that ${\rm div}(w_j^{(\lambda)})$'s transversally intersect along $U_j$, where $w_j^{(\lambda)}\colon V_j\to\mathbb{C}$ is the composition of $w_j$ and $\lambda$-th projection map $\mathbb{C}^r\to\mathbb{C}$. 
By a simple argument, one may assume that $dw_j=T_{jk}dw_k$ holds on each $U_{jk}:=U_j\cap U_k$ for some unitary matrix $T_{jk}\in\mathrm{U}(r)$ by changing $w_j$'s if necessary, where 
\[
dw_j:=\left(
    \begin{array}{c}
      dw_j^{(1)} \\
      dw_j^{(2)} \\
      \vdots \\
      dw_j^{(r)}
    \end{array}
  \right).  
\]
By shrinking $V_j$'s if necessary again, we assume that, for each $j$ with $U_j\cap Y_{\rm sing}=\emptyset$, there exists a holomorphic surjection ${\rm Pr}_{U_j}\colon V_j\to U_j$ such that $(w_j, z_j\circ {\rm Pr}_{U_j})$ are coordinates of $V_j$, where $z_j$ is a coordinate of $U_j$. 
In what follows, for any holomorphic function $f$ on $U_j$, we denote by the same letter $f$ the pull-buck ${\rm Pr}_{U_j}^*f:=f\circ {\rm Pr}_{U_j}$. 
Take $U_j$ and $U_k$ such that $U_{jk}\not=\emptyset$. 
We may assume that $U_j\cap Y_{\rm sing}=\emptyset$. 
In this case, one obtain the series expansion 
\[
T_{jk}\cdot \left(
    \begin{array}{c}
      w_k^{(1)} \\
      w_k^{(2)} \\
      \vdots \\
      w_k^{(r)}
    \end{array}
  \right)
=
\left(
    \begin{array}{c}
      w_j^{(1)} \\
      w_j^{(2)} \\
      \vdots \\
      w_j^{(r)}
    \end{array}
  \right)
+\sum_{|a|\geq 2} \left(
    \begin{array}{c}
      f_{kj, a}^{(1)}(z_j) \\
      f_{kj, a}^{(2)}(z_j) \\
      \vdots \\
      f_{kj, a}^{(r)}(z_j)
    \end{array}  \right)
\cdot w_j^a, 
\]
where $a=(a_1, a_2, \dots, a_r)$ is the multiple index running all the elements of $(\mathbb{Z}_{\geq 0})^r$ with $|a|:=\sum_{\lambda=1}^ra_r\geq2$, 
$f_{kj, a}^{(\lambda)}$'s are holomorphic functions on $U_{jk}$ (we regard this also as a function defined by $({\rm Pr}_{U_j}|_{{\rm Pr}_{U_j}^{-1}(U_{jk})})^*f_{kj, a}^{(\lambda)}$), 
and $w_j^a:=\prod_{\lambda=1}^r(w_j^{(\lambda)})^{a_\lambda}$. 
For a positive integer $m$, we say that $\{(V_j, w_j)\}$ is of {\it type $m$} if $f_{kj, a}\equiv 0$ holds for any $a$ with $|a|\leq m$ and any $j, k$ such that $U_{jk}\not=\emptyset$ and $U_j\cap Y_{\rm sing}=\emptyset$. 
If $\{(V_j, w_j)\}$ is of type $m$, it follows that 
$\{(U_{jk}, u_{jk})\}$ satisfies the $1$-cocycle condition, where 
\[
u_{jk}:=\sum_{\lambda=1}^r\sum_{|a|=m+1}f_{kj, a}^{(\lambda)} \, \frac{\partial}{\partial w_j^{(\lambda)}}\otimes dw_j^a, 
\]
and thus it defines an element of $H^1(Y, \mathcal{O}_Y(N_{Y/X}\otimes S^{m+1}N_{Y/X}^*))$. 
We denote by $u_m(Y, X)$ the class $[\{(U_{jk}, u_{jk})\}]$, which is the definition of the $m$-th Ueda class. 
Note that we define $u_{kj}:=-u_{jk}$ for $j$ with $U_j\cap Y_{\rm sing}\not=\emptyset$ when $Y$ is singular. 
Ueda class $u_m(Y, X)$ is well-defined up to the $\mathrm{U}(r)$-action of $H^1(Y, \mathcal{O}_Y(N_{Y/X}\otimes S^{m+1}N_{Y/X}^*))$: 
i.e. it does not depend on the choice of the system $\{(V_j, w_j)\}$ of type $m$ as an element of $H^1(Y, \mathcal{O}_Y(N_{Y/X}\otimes S^{m+1}N_{Y/X}^*))/\mathrm{U}(r)$ \cite[p. 588]{U}, 
\cite[Proposition 3.6]{K6}, \cite[Lemma 3.6]{K8}. 

\begin{remark}\label{rmk:CLPT}
The obstruction can be similarly defined even when the normal bundle is $\mathbb{C}^*$-flat if once we fix a system of type $m$. 
However, it is not the case on the well-definedness of $u_m(Y, X)$, see \cite[Remark 2.2]{CLPT}. 

\end{remark}

Finally, in the rest of this subsection, let us summarize the situation when $(Y, X)=(Y_Z, X_Z)$ is the pair as in \S 1: 
i.e. $X$ is the blow-up of $\mathbb{P}^2$ at nine points $Z=\{p_1, p_2, \dots, p_9\}$ 
such that $Z\subset C\setminus C_{\rm sing}$, where $C$ is either a smooth elliptic curve or a cycle of rational curves in $\mathbb{P}^2$, 
$C_{\rm sing}$ is the singular part of $C$, 
and $Y$ is the strict transform of $C$. 
Assume that the anti-canonical bundle $K_X^{-1}$ is nef. 
In this case, $N_{Y/X}$ is topologically trivial. 
By the arguments in \cite[\S 1.1]{U}, $N_{Y/X}$ is unitary flat if $Y$ is a smooth elliptic curve. 
As will be seen in \S \ref{section:setting_nodal}, when $Y$ is a cycle of rational curves, there exists an isomorphism $\alpha\colon {\rm Pic}^0(Y)\to \mathbb{C}^*$ such that an element $L\in {\rm Pic}^0(Y)$ is unitary flat if and only if $|\alpha(L)|=1$ (see also \cite[\S 1]{U91} for the case where $Y$ is a rational curve with a node). 
If $N_{Y/X}$ is not unitary flat, $Y$ admits a strongly pseudoconcave neighborhood and the line bundle $[Y]$ is not semi-positive. 
In this case, $T_Y$ as in the previous subsection is only the element of the set $\{T\in c_1(K_{X_Z}^{-1})\mid T: \text{closed semi-positive}\ (1, 1)-\text{current}\}$ 
(It follows by combining the arguments in the proof of \cite[Theorem 1. 1]{K3} and \cite[Theorem 1.6 $(ii)$]{K6}). 
In what follows, we will investigate the case where $N_{Y/X}$ is unitary flat. 
In this case, it follows from the argument as in \cite{N} that $(Y, X)$ is of infinite type even when $N_{Y/X}$ is torsion (it is clear that the pair is of infinite type if $N_{Y/X}$ is non-torsion, since $H^1(Y, \mathcal{O}_Y(N_{Y/X}^{-m}))=0$ holds for any positive integer $m$ in this case). 
Therefore, by Theorem \ref{thm:ueda_thm_for_infin_type}, it follows that there exists a neighborhood $V$ of $Y$ such that $[Y]|_V$ is unitary flat if $N_{Y/X}$ is torsion or Diophantine. 
Thus, by using the regularized minimum construction as we explained in the previous subsection (see also \cite[\S 5]{K8}), $K_X^{-1}$ is semi-positive if $N_{Y/X}$ is torsion or Diophantine. 

When $Y$ is an elliptic curve, 
denote by $p= p(N_{Y/X})$ and $q= q(N_{Y/X})$ the real number such that the holonomy of the foliation defined by the flat metric along $\gamma_1$ and $\gamma_2$ is equal to 
$\exp(2\pi\sqrt{-1}p)$ and $\exp(2\pi\sqrt{-1}q)$, respectively, where $\gamma_1$ and $\gamma_2$ are generators of the fundamental group $\pi_1(Y, *)$ of $Y$. 
The normal bundle $N_{Y/X}$ is torsion if and only if both $p$ and $q$ are rational. 
The normal bundle $N_{Y/X}$ is Diophantine if either $p$ or $q$ is a Diophantine irrational number. 
According to Theorem \ref{thm:main_9pt_b-up_of_P^2_smooth}, $K_X^{-1}$ is semi-positive for $(Y, X)$ as above if $Y$ is smooth and either $p(N_{Y_Z/X_Z})$ or $q(N_{Y_Z/X_Z})$ is rational. 
Thus the remaining problem is as follows: 

\begin{question}
Is $K_X^{-1}$ semi-positive when $(Y, X)=(Y_Z, X_Z)$ as above if $Y_Z$ is smooth and neither $p(N_{Y_Z/X_Z})$ nor $q(N_{Y_Z/X_Z})$ is rational or Diophantine for any choice of the generator $(\gamma_1, \gamma_2)$ of the fundamental group $\pi_1(Y, *)$ of $Y$?  
\end{question}

Note that, when $N_{Y/X}$ is neither torsion nor Diophantine, 
Ueda constructed an example of $(Y, X)$ which is of type $(\gamma)$ \cite[\S 5.4]{U}. 
For his example, we showed that $[Y]$ is not semi-positive \cite{K7} (see also \cite{KO2}). 
However, as this Ueda's example or some examples in \cite{KO2} are essentially only the known examples of type $(\gamma)$ and $X$ is non-compact surface in these examples, 
we know nothing on the existence of such an example when $X$ is compact. 

\subsection{Outline of the proof of Ueda type linearization theorems}\label{section:toy_model_single}

In this subsection, we explain the outline of the proof of Ueda type linearization theorem such as Theorem \ref{thm:ueda_thm_for_infin_type} by using a toy model. 

Let $Y$ be a smooth elliptic curve $\mathbb{C}/\langle 1, \sqrt{-1}\rangle$: i.e. $Y$ is the quotient $\mathbb{C}/\sim$, where ``$\sim$'' is the relation generated by $z\sim z+1\sim z+\sqrt{-1}$. Denote by $[z]$ the image of $z$ by the quotient map $\mathbb{C}\to Y$. 
Define an open covering $\{U_j\}_{j=1, 2, 3}$ by 
$U_j:=\{[z]\in\mathbb{R}/\mathbb{Z}\mid 2j<{\rm Re}\,z<(2j+1)/6\}$. 
Let $X$ be a non-singular surface which includes $Y$ as a subvariety such that 
$N_{Y/X}^{-1}=[\{(U_{jk}, t_{jk})\}]\in\check{H}^1(\{U_j\}, \mathcal{O}_Y^*)$ holds, where 
\[
t_{jk}=\begin{cases}
s & \text{if}\ (j, k)=(3, 1)\\
s^{-1} & \text{if}\ (j, k)=(1, 3)\\
1 & \text{otherwise}
\end{cases}
\]
for some $s\in \mathrm{U}(1)$. 
Additionally, we assume for simplicity that there exists a neighborhood $V$ of $Y$ in $X$ and a holomorphic map ${\rm Pr}_Y\colon V\to Y$ such that ${\rm Pr}_Y|_Y$ is the identity, 
and that there exists a defining function $w_j$ on $V_j$ of $U_j$ for each $j$, where $V_j:={\rm Pr}_Y^{-1}(U_j)$. 
By a simple argument, it follows that we may assume $dw_j=t_{jk}dw_k$ holds on each $U_{jk}$. 
Fix a local coordinate $z_j$ of $Y$ on $U_j$, and regard $(z_j, w_j)$ as coordinates of $V_{jk}$ (Here we extend the domain of the function $z_j$ to $V_j$ by pulling buck by ${\rm Pr}_Y$). 
Then, by letting $f_{kj, m}:=\frac{\partial^m}{\partial w_j^m}|_{w_j=0}(t_{jk}\cdot w_k)$, 
one have that 
\[
t_{jk}\cdot w_k=w_j+f_{kj, 2}(z_j)\cdot w_j^2+f_{kj, 3}(z_j)\cdot w_j^3+\dots
\]
holds on each $V_{jk}$. 
Our goal is to construct a new system $\{(V_j, \widehat{w}_j)\}$ of local defining functions such that $\widehat{w}_j=t_{jk}\widehat{w}_k$ by modifying $w_j$'s (and by shrinking $V_j$'s if necessary). 

The strategy of Ueda's proof of \cite[Theorem 3]{U} can be explained as follows: 
Define a suitable holomorphic functions $F_{j, m}\colon U_j\to \mathbb{C}$ for each $j=1, 2, 3$ and for each $m\geq 2$ so that the solution of the functional equation  
\begin{equation}\label{eq:func_eq_toy_model}
w_j=\widehat{w}_j+\sum_{m=2}^\infty F_{j, m}(z_j)\cdot \widehat{w}_j^m
\end{equation}
satisfies $\widehat{w}_j=t_{jk}\widehat{w}_k$ on a neighborhood of $U_{jk}$ (if exists). 
Here we are regarding $F_{j, m}$ as a function on $V_j$ by pulling back by ${\rm Pr}_Y$. 

In order for $\widehat{w}_j$'s to satisfy $\widehat{w}_j=t_{jk}\widehat{w}_k$, both of the coefficients of $\widehat{w}_j^m$ in two expansions
\[
t_{jk}w_k= w_j+\sum_{m=2}^\infty f_{kj, m}\cdot w_j^m
=\widehat{w}_j+\sum_{m=2}^\infty F_{j, m}\cdot \widehat{w}_j^m
+\sum_{\ell=2}^\infty f_{kj, \ell}\cdot \left(\widehat{w}_j+\sum_{n=2}^\infty F_{j, n}\cdot \widehat{w}_j^n\right)^\ell
\]
and 
\[
t_{jk}w_k
=t_{jk}\widehat{w}_k+\sum_{m=2}^\infty t_{jk}F_{k, m}\cdot \widehat{w}_k^m
=t_{jk}\widehat{w}_k+\sum_{m=2}^\infty t_{jk}^{1-m}F_{k, m}\cdot \widehat{w}_j^m
\]
should coincide for each $m$. 
This condition can be reworded as $-F_{j, m}+t_{jk}^{1-m}F_{k, m}=h_{kj, m}$, 
where $h_{kj, m}$ is the coefficient of $\widehat{w}_j^m$ in the expansion of 
\[
\sum_{\ell=2}^\infty f_{kj, \ell}\cdot \left(\widehat{w}_j+\sum_{n=2}^\infty F_{j, n}\cdot \widehat{w}_j^n\right)^\ell \in \mathcal{O}_{Y}(U_j)[[\widehat{w}_j]]. 
\]
Note that each $h_{kj, m}$ only depends on $f_{kj, \mu}$'s and $\{F_{j, \mu}\}_{\mu<m, j=1, 2, 3}$, and does not depend on $\{F_{j, \mu}\}_{\mu\geq m, j=1, 2, 3}$: Indeed, $h_{kj, 2}=f_{kj, 2}$ and $h_{kj, 3}=f_{kj, 3}+2f_{kj, 2}\cdot F_{j, 2}$ hold for example. 
Conversely, it is observed by relatively simple inductive argument that a formal solution $\widehat{w}_j$ of the functional equation $(\ref{eq:func_eq_toy_model})$ satisfies $t_{jk}\widehat{w}_k=\widehat{w}_j$ in any order jet along $U_{jk}$ if $F_{j, m}$'s satisfies $-F_{j, m}+t_{jk}^{1-m}F_{k, m}=h_{kj, m}$. 
Therefore what we should do is the following: 
({\bf Step $1$}) Solve the equation 
\[
\delta\{(U_j, F_{j, m})\} (:=\{(U_{jk}, -F_{j, m}+t_{jk}^{1-m}F_{k, m}\}) = \{(U_{jk}, h_{kj, m})\}\in \check{Z}^1(\{U_j\}, \mathcal{O}_Y(N_{Y/X}^{1-m}))
\]
to obtain $F_{j, m}$'s inductively on $m$. 
({\bf Step $2$})  Estimate $B_m:=\max_j \sup_{U_j}|F_{j, m}|$ and show that the formal power series $\sum_{m=2}^\infty B_mX^m\in \mathbb{C}[[X]]$ has a positive radius of convergence (if so, one actually show the existence of the solution $\widehat{w}_j$ of the functional equation $(\ref{eq:func_eq_toy_model})$ by shrinking $V_j$ by using the implicit function theorem, which completes the proof). 
Note that the assumption that the pair is of infinite type is needed in {\bf Step $1$} (since the class $[\{(U_{jk}, h_{kj, m})\}]$ coincides with $m$-th Ueda class $u_m(Y, X)$, see \cite[p. 598]{U}), 
and that the normal bundle is either torsion or Diophantine is needed in {\bf Step $2$}. 

In the rest of this subsection, we will focus on {\bf Step $2$}. 
In what follows, we assume that each $f_{kj, m}$ is a constant function for simplicity. 
By a simple inductive observation, each $F_{j, m}$ is also constant in this case. 

\begin{remark}
When each $f_{kj, m}$ is a constant function, clearly there is a holomorphic foliation such that each leaves are defined by $\{w_j=\text{constant}\}$. 
In this case, the problem is reduced to the linearization problem of the holonomy function, which explains the strategy of original proof of Arnol'd's theorem \cite{A} in our simple model. 
\end{remark}

For $\sigma\in\mathbb{C}^*$, denote by $L_\sigma$ the unitary flat line bundle on $Y$ defined by 
\[
L_\sigma:= [\{(U_{12}, 1), (U_{21}, 1), (U_{23}, 1), (U_{32}, 1), (U_{31}, \sigma^{-1}), (U_{13}, \sigma)\}]\in \check{H}^1(\{U_j\}, \mathbb{C}^*), 
\]
and by $\mathbb{C}(L_\sigma)$ the sheaf of locally constant sections of $L_\sigma$. 
In the case where each $f_{kj, m}$ is constant, $F_{j, m}$'s are inductively defined by solving 
\[
\delta\{(U_j, F_{j, m})\} (:=\{(U_{jk}, -F_{j, m}+t_{jk}^{1-m}F_{k, m}\}) = \{(U_{jk}, h_{kj, m})\}\in \check{Z}^1(\{U_j\}, \mathbb{C}(L_s^{m-1})). 
\]
The following lemmata are needed for the inductive estimate of $|F_{j, m}|$'s: 

\begin{lemma}\label{lem:ueda_lem_analogue_toymodel}
There exists a constant $K$ which does not depend on $\sigma$ such that, 
for any 
$\alpha=\{(U_{jk}, \alpha_{jk})\}\in \check{Z}^1(\{U_j\}, \mathbb{C}(L_{\sigma}))$ and 
$\beta=\{(U_j, \beta_j)\}\in \check{C}^0(\{U_j\}, \mathbb{C}(L_{\sigma}))$ with 
$\delta\beta=\alpha$, it holds that 
\[
d(1, \sigma)\cdot \max_{j}|\beta_j|\leq K\cdot \max_{j, k}|\alpha_{j, k}|, 
\]
where $d$ is the distance of $\mathrm{U}(1)=\mathbb{R}/\mathbb{Z}$ induced by the Euclidean distance of the universal covering $\mathbb{R}$. 
\end{lemma}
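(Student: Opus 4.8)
The plan is to prove Lemma~\ref{lem:ueda_lem_analogue_toymodel} by writing everything down explicitly, since the covering has only three open sets and the cocycle is supported on the single overlap $U_{13}=U_{31}$. First I would unwind the definitions: a $0$-cochain is a triple $(\beta_1,\beta_2,\beta_3)\in\mathbb{C}^3$ (recall each section is constant in our toy model), and $\delta\beta=\alpha$ reads $\alpha_{12}=\beta_1-\beta_2$, $\alpha_{23}=\beta_2-\beta_3$ on the overlaps where the transition function is trivial, and $\alpha_{31}=\sigma^{-1}\beta_3-\beta_1$ on $U_{31}$ (with $\alpha_{21},\alpha_{32},\alpha_{13}$ the obvious counterparts). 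Adding the three relations gives the consistency condition $\alpha_{12}+\alpha_{23}+\sigma^{-1}\alpha_{31}\cdot(\text{twist})=\dots$; more usefully, summing $\beta_1-\beta_2$, $\beta_2-\beta_3$, and $\sigma^{-1}\beta_3-\beta_1$ yields $(\sigma^{-1}-1)\beta_3 = \alpha_{12}+\alpha_{23}+\alpha_{31}$, so that
\[
\beta_3=\frac{\alpha_{12}+\alpha_{23}+\alpha_{31}}{\sigma^{-1}-1},
\]
and then $\beta_2=\beta_3+\alpha_{23}$, $\beta_1=\beta_2+\alpha_{12}$.

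The next step is the elementary estimate $|\sigma^{-1}-1|\geq c\cdot d(1,\sigma)$ for all $\sigma\in\mathrm{U}(1)$, where $d$ is the quotient distance on $\mathrm{U}(1)=\mathbb{R}/\mathbb{Z}$; writing $\sigma=e^{2\pi\sqrt{-1}\theta}$ with $\theta\in[-1/2,1/2]$ we have $d(1,\sigma)=|\theta|$ and $|\sigma^{-1}-1|=2|\sin(\pi\theta)|\geq 4|\theta|$ by concavity of sine on $[0,\pi/2]$, so one may take $c=4$. Combining this with the explicit formulas bounds $d(1,\sigma)\cdot|\beta_3|\le \tfrac14\cdot 3\max_{j,k}|\alpha_{jk}|$, and then $d(1,\sigma)\cdot|\beta_2|$, $d(1,\sigma)\cdot|\beta_1|$ are bounded by adding at most two more terms each (note $d(1,\sigma)\le 1/2$ is bounded, so the extra $\alpha$-terms cost a harmless constant). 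A uniform $K$ — independent of $\sigma$ — then drops out; tracking the constants gives something like $K=5$, but the precise value is irrelevant.

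One subtlety I would flag: if $\sigma=1$ the cocycle $L_\sigma$ is trivial and $\delta$ is no longer injective modulo constants, but then $d(1,\sigma)=0$ and the asserted inequality reads $0\le K\max|\alpha_{jk}|$, which is vacuous; so the formula above is only used when $\sigma\ne1$, and the statement is trivially true at $\sigma=1$. The genuinely load-bearing point — and the only place any real content enters — is the lower bound $|\sigma^{-1}-1|\ge c\,d(1,\sigma)$ with $c$ independent of $\sigma$; everything else is bookkeeping on a three-set cover. I expect no serious obstacle here: the difficulty in the paper proper is that in the real setting the $f_{kj,m}$ and $F_{j,m}$ are honest holomorphic functions on the $U_j$, the cover is larger, and $H^1$ is computed with $\mathcal{O}_Y(N^{1-m})$-coefficients rather than locally constant ones, so Ueda's lemma \cite[Lemma~4]{U} is needed in place of this hands-on computation; but in the toy model the explicit inversion of $\delta$ makes the estimate immediate.
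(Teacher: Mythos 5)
Your proposal is correct and follows essentially the same route as the paper: solve the three linear equations explicitly so that one $\beta_j$ equals $(\alpha_{12}+\alpha_{23}+\alpha_{31})$ divided by $\sigma^{-1}-1$ (up to a unit) and the others differ from it by at most two $\alpha$-terms, then invoke the comparison $|1-\sigma|\geq c\,d(1,\sigma)$ between the chord and arc distances on $\mathrm{U}(1)$, with the $\sigma=1$ case being vacuous. The only differences are cosmetic (sign conventions for $\delta$ shift which index carries the $1/(\sigma^{-1}-1)$, and you make the constant $c=4$ explicit where the paper just cites equivalence of the two distances).
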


\begin{proof}
If $\sigma=1$, nothing is non-trivial. 
Otherwise, by a simple calculation, one have that 
$\beta_1 = -\frac{A}{1-\sigma}$, $\beta_2 = -\alpha_{31} -\alpha_{23} - \frac{\sigma A}{1-\sigma}$, and $\beta_3 = -\alpha_{31} - \frac{\sigma A}{1-\sigma}$, where $A:=\alpha_{12}+\alpha_{23}+\alpha_{31}$. 
The assertion follows from this and the equivalence of $d$ and the distance of $\mathrm{U}(1)$ obtained by restricting the Euclidean distance of $\mathbb{C}$ ($\supset \mathrm{U}(1)$). 
\end{proof}

\begin{lemma}\label{lem:KS_lem_analogue_toymodel}
For each $\sigma\in \mathrm{U}(1)$, there exists a constant $K_0(\sigma)$ which satisfies the following property:  
For any $\alpha=\{(U_{jk}, \alpha_{jk})\}\in \check{Z}^1(\{U_j\}, \mathbb{C}(L_{\sigma}))$ with $[\alpha]=0\in \check{H}^1(\{U_j\}, \mathbb{C}(L_{\sigma}))$, 
there exists an element $\beta=\{(U_j, \beta_j)\}\in \check{C}^0(\{U_j\}, \mathbb{C}(L_{\sigma}))$ such that $\delta\beta=\alpha$ and 
\[
\max_{j}|\beta_j|\leq K_0(\sigma)\cdot \max_{j, k}|\alpha_{j, k}|
\]
hold. 
\end{lemma}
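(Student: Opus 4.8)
The claim is a finite-dimensional statement: for a fixed $\sigma \in \mathrm{U}(1)$, the coboundary map $\delta\colon \check{C}^0(\{U_j\}, \mathbb{C}(L_\sigma)) \to \check{Z}^1(\{U_j\}, \mathbb{C}(L_\sigma))$ is a linear map between finite-dimensional complex vector spaces, and we are asserting that it admits a bounded right inverse on its image. Since the covering $\{U_j\}_{j=1,2,3}$ is fixed and finite, the cochain spaces $\check{C}^0$ and $\check{C}^1$ are just $\mathbb{C}^3$ (with the obvious identification of sections of the locally constant sheaf $\mathbb{C}(L_\sigma)$ over each connected piece of an intersection with complex numbers, noting each $U_{jk}$ is connected here). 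So $\delta$ is simply a $3\times 3$ (or appropriately sized) matrix with entries depending on $\sigma$, and the cocycle space $\check{Z}^1 = \check{C}^1$ here since $\{U_j\}$ has no nonempty triple intersections.

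\textbf{Key steps.} First I would make the identification explicit: a $0$-cochain is a triple $(\beta_1,\beta_2,\beta_3) \in \mathbb{C}^3$, a $1$-cochain is determined by its values $(\alpha_{12},\alpha_{23},\alpha_{31})$ on the three nonempty pairwise intersections, and $\delta\beta$ has components $\delta\beta_{12} = -\beta_1 + \beta_2$, $\delta\beta_{23} = -\beta_2+\beta_3$, $\delta\beta_{31} = -\beta_3 + \sigma\beta_1$ (using the transition $t_{31} = \sigma$, with appropriate sign/power conventions from Lemma \ref{lem:ueda_lem_analogue_toymodel}). Second, given $\alpha$ with $[\alpha] = 0$, I would simply pick \emph{any} preimage $\beta = \{(U_j,\beta_j)\}$ under $\delta$; such a $\beta$ exists precisely because $[\alpha]=0$. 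Third, to get the norm bound, I would invoke elementary linear algebra: $\delta$ restricted to a complement of its kernel is an injective linear map, hence a homeomorphism onto its image, so there is a constant — call it $K_0(\sigma)$ — with $\max_j|\beta_j'| \le K_0(\sigma)\cdot \max_{j,k}|\alpha_{jk}|$ for the preimage $\beta'$ lying in that complement; and any preimage can be replaced by the one in the complement without changing the bound, since modifying by an element of $\ker\delta$ (a global section of $\mathbb{C}(L_\sigma)$) lets us normalize. Concretely, when $\sigma \ne 1$ one checks $\ker\delta = 0$ and one can write down $\beta$ explicitly from $\alpha$ (just as in the proof of Lemma \ref{lem:ueda_lem_analogue_toymodel}, where $\beta_1 = -A/(1-\sigma)$ etc., with $A = \alpha_{12}+\alpha_{23}+\alpha_{31}$), and read off $K_0(\sigma) = C/|1-\sigma|$ for an absolute constant $C$; when $\sigma = 1$ the sheaf $\mathbb{C}(L_\sigma)$ is the constant sheaf $\mathbb{C}$, $H^1 = \mathbb{C}$, and on the (one-dimensional) cocycle space with $[\alpha]=0$ one has $\alpha = 0$ hence $\beta = 0$ works and any $K_0(1) > 0$ suffices.

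\textbf{Main obstacle.} There is essentially no deep obstacle here: the statement is a soft finite-dimensional existence-of-bounded-inverse fact, and the only thing to be careful about is the bookkeeping — correctly identifying the sheaf cohomology of $\mathbb{C}(L_\sigma)$ with respect to this explicit three-set covering (in particular that $\check{H}^1$ genuinely computes the sheaf cohomology, which holds since the $U_j$ and their intersections are Stein/contractible), and keeping the transition-function conventions consistent with Lemma \ref{lem:ueda_lem_analogue_toymodel} so the two lemmas combine cleanly in the inductive estimate that follows. The point of stating Lemma \ref{lem:KS_lem_analogue_toymodel} separately from Lemma \ref{lem:ueda_lem_analogue_toymodel} is that $K_0(\sigma)$ is allowed to depend on $\sigma$ (and indeed blows up as $\sigma \to 1$ through non-torsion values), whereas the constant $K$ in Lemma \ref{lem:ueda_lem_analogue_toymodel} is uniform; the honest content is that for \emph{each} individual $\sigma$ a bound exists, which is immediate once one recognizes $\delta$ as a fixed linear map.
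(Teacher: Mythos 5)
Your main line of argument --- that for fixed $\sigma$ the coboundary $\delta$ is a linear map between the finite-dimensional spaces $\check{C}^0\cong\mathbb{C}^3$ and $\check{C}^1=\check{Z}^1\cong\mathbb{C}^3$, so that it admits a bounded right inverse on its image with some constant $K_0(\sigma)$ --- is valid and proves the lemma for every $\sigma$, including $\sigma=1$. This is softer and slightly more general than what the paper does: the paper instead quotes Lemma \ref{lem:ueda_lem_analogue_toymodel} verbatim for $\sigma\neq 1$ (giving $K_0(\sigma)=K/d(1,\sigma)$) and writes down an explicit primitive for $\sigma=1$ (giving $K_0(1)=2$). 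What the paper's explicit route buys, and what your abstract route does not, is the concrete identification of when $[\alpha]=0$ and an explicit formula for $\beta$; both of these are reused later (the identity $A(\zeta_m^\nu)=0$ and the explicit $\beta_j$'s reappear in the proof of Proposition \ref{prop:mainlemma_toy}). Also note that since the lemma is stated purely in terms of $\check{H}^1$ of this fixed covering, you need not worry about whether the covering computes sheaf cohomology (it would not obviously do so anyway: the $U_j$ are annuli, not contractible); $[\alpha]=0$ simply means $\alpha\in\operatorname{im}\delta$.

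There is, however, one genuine error in your concrete verification of the case $\sigma=1$. You assert that the cocycle space with $[\alpha]=0$ is one-dimensional and that $[\alpha]=0$ forces $\alpha=0$, so that $\beta=0$ works. This is false. For $\sigma=1$ the full cocycle space is $\check{Z}^1=\check{C}^1\cong\mathbb{C}^3$, the kernel of $\delta$ is the line of constants, hence $\operatorname{im}\delta$ is the two-dimensional hyperplane $\{\alpha_{12}+\alpha_{23}+\alpha_{31}=0\}$, and $\check{H}^1\cong\mathbb{C}^3/\operatorname{im}\delta\cong\mathbb{C}$. So $[\alpha]=0$ is equivalent to $A:=\alpha_{12}+\alpha_{23}+\alpha_{31}=0$, which leaves a two-parameter family of nonzero cocycles (e.g.\ $\alpha_{12}=1$, $\alpha_{23}=-1$, $\alpha_{31}=0$) for which $\beta=0$ is not a primitive. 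The correct explicit choice, as in the paper, is $\beta_1=0$, $\beta_2=-\alpha_{31}-\alpha_{23}$, $\beta_3=-\alpha_{31}$, which yields $K_0(1)=2$. Your abstract bounded-inverse argument still covers this case, so the lemma is not in jeopardy, but the specific claim should be corrected --- you appear to have confused the dimension of $\check{H}^1$ with the dimension of the space of cocycles cohomologous to zero.
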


\begin{proof}
When $\sigma\not=1$, the assertion follows from Lemma \ref{lem:ueda_lem_analogue_toymodel} (Note that the solution $\beta$ is unique in this case). 
When $\sigma=1$, it follows by simple argument that $[\alpha]=0$ holds 
if and only if $\alpha_{12}+\alpha_{23}+\alpha_{31}=0$. 
As the solution $\beta$ can be constructed by letting 
$\beta_1 := 0$, $\beta_2 := -\alpha_{31} -\alpha_{23}$, and $\beta_3 := -\alpha_{31}$, 
the assertion holds if one let $K_0(1):=2$. 
\end{proof}

Note that Lemma \ref{lem:ueda_lem_analogue_toymodel} can be regarded as an analogue of \cite[Lemma 4]{U}, and Lemma \ref{lem:KS_lem_analogue_toymodel} can be regarded as an analogue of \cite[Lemma 3]{U} (=\cite[Lemma 2]{KS}). 

Let $M:=\max_j\sup_{V_j}|w_j|$, 
and assume that there exists a positive constant $R$ such that 
$\{(z_j, w_j)\in V_j\mid z_j\in V_k, |w_j|\leq 1/R\}\Subset V_k$ holds for each $j, k$ for simplicity (we mean by ``$\Subset$'' the relatively compact subset). 
Then, by an inductive argument as in \cite[\S 4]{U}, one have that the formal power series $A(X)=X+\sum_{m=2}^\infty A_m X^m\in \mathbb{R}[[X]]$ defined by the equation 
\[
\sum_{m=2}^\infty \frac{A_m}{K_0(s^{m-1})}\cdot X^m = \frac{MA(X)^2}{1-A(X)}
\]
satisfies $\max_j \sup_{U_j}|F_{j, m}|\leq A_m$ for any $m$, where $K_0(s^{m-1})$ is the one as in Lemma \ref{lem:KS_lem_analogue_toymodel}. 

When $s$ is a torsion element of $\mathrm{U}(1)$, $\{K_0(s^{m-1})\}_{m=2}^\infty$ is a finite set, and thus there exists a constant which is larger than any $K_0(s^{m-1})$. 
When $s=e^{2\pi\sqrt{-1}\theta}$ for a Diophantine irrational number, we have a suitable type of estimate of $K_0(s^{m-1})$'s by Lemma \ref{lem:ueda_lem_analogue_toymodel}. 
In these cases, one can show that the formal power series $A(X)$ has a positive radius of convergence (by using the implicit function theorem suitably in the torsion cacse \cite[\S 4.4]{U}, 
and by using the estimate as in \cite{Sie} in the Diophantine case \cite[\S 4.6]{U}). 

On the other hand, when $s=e^{2\pi\sqrt{-1}\theta}$ for a real number $\theta$ which is neither rational nor Diophantine, Lemma \ref{lem:ueda_lem_analogue_toymodel} is not enough to show the convergence of the formal power series $\sum_{m=2}^\infty \max_j |F_{j, m}|\cdot X^m\in \mathbb{C}[[X]]$ (Indeed, there is a counterexample by Ueda, see \cite[\S 5.4]{U}). 
We will explain our idea to improve the estimates under some special situations in the next subsection. 

\subsection{Our idea to improve the estimates}\label{section:our_strategy_in_toy_model}

In order to improve the estimates of $|F_{j, m}|$'s in the previous subsection, 
we will consider a deformation of the complex structure of $X$. 
Let $S$ be a neighborhood of $\mathrm{U}(1)$ in $\mathbb{C}^*$, 
$\mathcal{X}$ a complex manifold of dimension $3$, 
and $\pi\colon \mathcal{X}\to S$ be a surjective holomorphic submersion. 
Assume that there exist a submanifold $\mathcal{Y}\subset \mathcal{X}$ and a biholomorphism $p\colon \mathcal{Y}\to Y\times S$ such that the diagram 
\[
  \xymatrix{
    \mathcal{Y} \ar[rr]^{p}_{\cong} \ar[rd]_{\pi|_{\mathcal{Y}}} & \ar@{}[d]^{\circlearrowleft} & Y\times S \ar[ld]^{{\rm Pr}_2} \\
     & S & 
  }
\]
commutes, where $Y=\mathbb{C}/\langle 1, \sqrt{-1}\rangle$ and ${\rm Pr}_2$ is the second projection. 
For each $s\in S$, denote by $X_s$ the fiber $\pi^{-1}(s)$ and by $Y_s$ the submanifold $\mathcal{Y}\cap X_s$. 
In what follows, we will identify $\mathcal{Y}$ with $Y\times S$ via $p$. 

Letting $\mathcal{U}_j:=U_j\times Y$, where $U_j$ is the one in the previous subsection, we define a $\mathbb{C}^*$-flat line bundle $\mathcal{L}$ on $\mathcal{Y}=Y\times S$ by 
\[
\mathcal{L}=
[\{(\mathcal{U}_{12}, 1), (\mathcal{U}_{21}, 1), (\mathcal{U}_{23}, 1), (\mathcal{U}_{32}, 1), (\mathcal{U}_{31}, {\rm Pr}_2^{-1}), (\mathcal{U}_{13}, {\rm Pr}_2)\}]\in \check{H}^1(\{\mathcal{U}_j\}, \mathcal{O}_{\mathcal{Y}}^*), 
\]
where ${\rm Pr}_2\colon Y\times S\to S (\subset \mathbb{C}^*)$ is the second projection. 
Note that $\mathcal{L}|_{Y_s}=L_s$ holds via natural isomorphism between $Y_s=Y\times\{s\}$ and $Y$. 
Here we assume that $N_{\mathcal{Y}/\mathcal{X}}=\mathcal{L}$. 
Then one can regard each pair $(Y_s, X_s)$ as the one we observed in the previous subsection for each $s\in\mathrm{U}(1)$ under suitable additional assumptions. 

\begin{remark}
As will be seen in \S \ref{example:smooth}, the pair $(Y_Z, X_Z)$ as in \S 1 is settled in a fiber of such a deformation $\pi\colon \mathcal{X}\to S$ when $Y_Z$ is smooth. 
On the other hand, we constructed in \cite[Example 4.3]{K7} a pathological example of such $\pi\colon \mathcal{X}\to S$, in which the line bundle $[Y_s]$ on $X_s$ is semi-positive for almost every $s\in S$ in the sense of Lebesgue measure, whereas it is not semi-positive for uncountably many $s\in \mathrm{U}(1)$. 
In order to distinguish these two cases, {\bf Condition $(*)$} we will add below is important. 
Note that the pair $(Y_s, X_s)$ is of infinite type for any torsion element $s$ in $\mathrm{U}(1)$ in the former example \cite{N}, 
and that the pair $(Y_s, X_s)$ is of finite type for any torsion element $s$ in $\mathrm{U}(1)$ in the latter example \cite[Example 4.3]{K7}. 
\end{remark}

Assume, again for simplicity, that there exists a holomorphic retraction ${\rm Pr}_\mathcal{Y}$ from a neighborhood of $\mathcal{Y}$ onto $\mathcal{Y}$ such that ${\rm Pr}_\mathcal{Y}|_\mathcal{Y}$ is the identity, 
and that there exists a neighborhood $\mathcal{V}_j$ of each $\mathcal{U}_j$ in $\mathcal{X}$ and a defining function $w_j\colon \mathcal{V}_j\to \mathbb{C}$ of $\mathcal{U}_j$ such that $\{(V_{j, s}, w_j|_{V_{j, s}})\}$ is a system as in the previous subsection for each $s\in S$, where $V_{j, s}:=\mathcal{V}_j\cap X_s$. 
Under such a situation, one can consider the {\it ``simultaneous linearization problem''} for $\{(\mathcal{V}_j, w_j)\}$, which is the problem on constructing a new defining functions system $\{(\mathcal{V}_j, \widehat{w}_j)\}$ by shrinking $\mathcal{V}_j$'s if necessary such that 
$t_{jk}w_k=w_j$ holds on each $\mathcal{V}_{jk}$, where $t_{jk}\colon \mathcal{V}_{jk}\to \mathbb{C}^*$ is the function defined by 
\[
t_{jk}(x):=\begin{cases}
{\rm Pr}_2\circ {\rm Pr}_\mathcal{Y}(x) & \text{if}\ (j, k)=(3, 1)\\
({\rm Pr}_2\circ {\rm Pr}_\mathcal{Y}(x))^{-1}& \text{if}\ (j, k)=(1, 3)\\
1 & \text{otherwise}
\end{cases}, 
\]
where ${\rm Pr}_2\colon Y\times S\to S (\subset \mathbb{C}^*)$ is the second projection. 
Under some additional technical assumptions including the following {\bf Condition $(*)$} and {\bf $(**)$}, this simultaneous linearization problem is reduced to the estimate of the $L^\infty$ operator norm of the function 
\[
\delta\colon \check{C}^0(\{\mathcal{U}_j\}, \mathcal{O}_\mathcal{Y}(\mathcal{L}^{m-1}))\to \check{C}^1(\{\mathcal{U}_j\}, \mathcal{O}_\mathcal{Y}(\mathcal{L}^{m-1}))
\]
for each $m\geq 2$ 
by the same argument as in the previous subsection and by an analogue of Lemma \ref{lem:KS_lem_analogue_toymodel} (Proposition \ref{prop:mainlemma_toy} below for a toy model case, Proposition \ref{prop:main_estim} for the actual configuration). 

\begin{description}
\item[Condition $(*)$] The pair $(Y_s, X_s)$ is of infinite type for any $s\in S$. 
\item[Condition $(**)$] For each $m\geq 2$, $[\{(U_{jk}\times \{s\}, h_{kj, m}|_{U_{jk}\times\{s\}})\}]=0\in \check{H}^1(\{U_{j, s}\}, \mathcal{O}_Y(L_s^{m-1}))$ holds for any $s\in S$, 
where $h_{kj, m}$ is the function on $\mathcal{U}_{jk}$ defined in the same manner as in the previous subsection. 
\end{description}
 
In fact, it turns out that {\bf Condition $(**)$} implies {\bf Condition $(*)$}, 
since the class $[\{(U_{jk}\times \{s\}, h_{kj, m}|_{U_{jk}\times\{s\}})\}]$ coincides with $m$-th Ueda class for $s\in \mathrm{U}(1)$, as is mentioned in the previous subsection. 
On the other hand, even when {\bf Condition $(*)$} holds, it may possible that {\bf Condition $(**)$} does not hold, since the class $[\{(U_{jk}\times \{s\}, h_{kj, m}|_{U_{jk}\times\{s\}})\}]$ does not necessarily coincide with Ueda class (see also Remark \ref{rmk:CLPT}). 
In the actual situation (i.e. in the case where each fiber $X_s$ is realized as $X_Z$ as in \S 1), 
{\bf Condition $(*)$} holds. 
Although {\bf Condition $(**)$} is much more useful for running an argument we will explain as Proposition \ref{prop:mainlemma_toy} below for a toy model case (in order to explain the idea of the proof of Proposition \ref{prop:main_estim} for actual configuration), 
we do not know whether or not {\bf Condition $(**)$} holds in the actual situation. 
Thus, we need to run the inductive argument by carefully shrinking $S$ depending on $m$ so that {\bf Condition $(**)$} holds at each step of the induction 
(Note that the cohomology group $\check{H}^1(\{U_{j, s}\}, \mathcal{O}_Y(L_s^{m-1}))$ itself vanishes if $s^{m-1}\not=1$, which means that {\bf Condition $(*)$} implies {\bf Condition $(**)$} for $m=m_0$ if $S$ is so small that $\{s\in S\mid s^{m_0-1}=1\}\subset \mathrm{U}(1)$). 

In the rest of this subsection, we will explain the idea how to establish an analogue of Lemma \ref{lem:KS_lem_analogue_toymodel} for the family configuration and estimate the operator norm of 
$\delta\colon \check{C}^0(\{\mathcal{U}_j\}, \mathcal{O}_\mathcal{Y}(\mathcal{L}^{m-1}))\to \check{C}^1(\{\mathcal{U}_j\}, \mathcal{O}_\mathcal{Y}(\mathcal{L}^{m-1}))$. 
In what follows, we replace $S$ and $\mathcal{O}_\mathcal{Y}(\mathcal{L}^{m-1})$ with $\mathrm{U}(1)$ and $\mathcal{A}_{Y\times \mathrm{U}(1)}(\mathcal{L}^{m})$, respectively, and consider the coboundary map 
\[
\delta\colon \check{C}^0(\{U_j\times \mathrm{U}(1)\}, \mathcal{A}_{Y\times \mathrm{U}(1)}(\mathcal{L}^{m}))\to \check{C}^1(\{U_j\times \mathrm{U}(1)\}, \mathcal{A}_{Y\times \mathrm{U}(1)}(\mathcal{L}^{m})). 
\]
as a toy model, where $\mathcal{A}_{Y\times \mathrm{U}(1)}(\mathcal{L}^{m})$ is 
the sheaf of the sections of $\mathcal{L}^{m}$ which are locally constant in $Y$ direction and real analytic in $\mathrm{U}(1)$ direction. 

For each positive integer $m$ and for each $\nu=0, 1, \dots, m-1$, define a subset $W_{m, \nu}$ of $\mathrm{U}(1)$ by 
\[
W_{m, \nu}:=\left\{\exp(2\pi\sqrt{-1}\theta)\in \mathrm{U}(1)\left|\ \theta\in\mathbb{R}\,  \left|\theta-\frac{\nu}{m}\right|\leq \frac{1}{2m}\right.\right\}. 
\]
Note that $s=\zeta_m^\nu$ is the only element $s$ of $W_{m, \nu}$ such that $s^m=1$, where $\zeta_m:=\exp(2\pi\sqrt{-1}/m)$. 
For an open neighborhood $\widetilde{W}_{m, \nu}$ defined by 
\[
\widetilde{W}_{m, \nu}:=\left\{\exp(2\pi\sqrt{-1}\theta)\in \mathrm{U}(1)\left|\ \theta\in\mathbb{R}\,  \left|\theta-\frac{\nu}{m}\right|\leq \frac{3}{4m}\right.\right\}
\]
of $W_{m, \nu}$ in $\mathrm{U}(1)$, we first show the following: 

\begin{proposition}\label{prop:mainlemma_toy}
Let 
$\alpha=\{(U_{jk}\times\widetilde{W}_{m, \nu}, \alpha_{jk})\}$ 
be an element of $\check{Z}^1(\{U_j\times \widetilde{W}_{m, \nu}\}, \mathcal{A}_{Y\times \mathrm{U}(1)}(\mathcal{L}^{m}))$. 
Denote by $\alpha_s=\{(U_{jk}\times\{s\}, \alpha_{jk}|_{U_{jk}\times\{s\}})\} \in \check{Z}^1(\{U_{j, s}\}, \mathbb{C}(L_s^{m}))$ the element obtained by restricting $\alpha$ to $Y_s$ for each $s\in \widetilde{W}_{m, \nu}$. 
Then the followings are equivalent: \\
$(i)$ $[\alpha_{\zeta_m^\nu}]=0\in \check{H}^1(\{U_{j, \zeta_m^\nu}\}, \mathbb{C}(L_{\zeta_m^\nu}^m))$. \\
$(ii)$ $[\alpha|_{Y\times \widetilde{W}_{m, \nu}}]=0\in \check{H}^1(\{U_j\times \widetilde{W}_{m, \nu}\}, \mathcal{A}_{Y\times \mathrm{U}(1)}(\mathcal{L}^m))$. 
\end{proposition}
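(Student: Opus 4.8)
The plan is to prove the implication $(i)\Rightarrow(ii)$ by an explicit construction, since the reverse implication $(ii)\Rightarrow(i)$ is immediate: restricting a coboundary $\delta\beta$ to the fiber $Y_{\zeta_m^\nu}$ yields a coboundary, so $[\alpha|_{Y\times\widetilde W_{m,\nu}}]=0$ forces $[\alpha_{\zeta_m^\nu}]=0$. For the substantive direction, first I would write everything out in coordinates using the three-set covering $\{U_1,U_2,U_3\}$, exactly as in Lemma \ref{lem:KS_lem_analogue_toymodel}. A $1$-cocycle for $\mathcal{L}^m$ over $\{U_j\times\widetilde W_{m,\nu}\}$ is determined (via the cocycle condition) by the three functions $\alpha_{12},\alpha_{23},\alpha_{31}$, which are real-analytic in the $s$-variable and locally constant in $Y$; because of the twisting by $\mathcal{L}^m$, the relevant combination governing exactness on each fiber is $A(s):=\alpha_{12}(s)+\alpha_{23}(s)+s^{m}\alpha_{31}(s)$ (the weight $s^m$ coming from the transition function ${\rm Pr}_2$ raised to the $m$-th power across $U_{31}$). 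On the fiber over $s$, the cocycle is a coboundary in $\check{H}^1(\{U_{j,s}\},\mathbb{C}(L_s^m))$ if and only if either $s^m\ne 1$ (in which case the group vanishes and $\beta$ is unique, given by the formulas in Lemma \ref{lem:ueda_lem_analogue_toymodel} with $\sigma=s^m$), or $s^m=1$ and $A(s)=0$.

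The key point is then: on $\widetilde W_{m,\nu}$ the only root of unity $s$ with $s^m=1$ is $s=\zeta_m^\nu$, so hypothesis $(i)$ says precisely $A(\zeta_m^\nu)=0$. I would next define the candidate $0$-cochain $\beta=\{(U_j\times\widetilde W_{m,\nu},\beta_j)\}$ fiberwise by the Lemma \ref{lem:ueda_lem_analogue_toymodel} formulas, i.e. $\beta_1(s)=-A(s)/(1-s^m)$, $\beta_2(s)=-\alpha_{31}(s)-\alpha_{23}(s)-s^m A(s)/(1-s^m)$, $\beta_3(s)=-\alpha_{31}(s)-s^m A(s)/(1-s^m)$. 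Away from $s=\zeta_m^\nu$ these are manifestly real-analytic and satisfy $\delta\beta=\alpha$. The only issue is the apparent singularity at $s=\zeta_m^\nu$, where $1-s^m$ vanishes to first order; but $A(s)$ is real-analytic and vanishes at $s=\zeta_m^\nu$ by $(i)$, so $A(s)/(1-s^m)$ extends real-analytically across $s=\zeta_m^\nu$ — this is the one genuinely delicate step, and it amounts to checking that division of a real-analytic function vanishing at a point by a real-analytic function with a simple zero at that point is again real-analytic (which holds since $1-s^m$ has a simple zero there: its derivative in $\theta$ is $-2\pi\sqrt{-1}m\,\zeta_m^{\nu(m-1)}\ne 0$). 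I would also verify that $\beta_j$ is locally constant in the $Y$-direction, which is automatic from the construction since $\alpha_{12},\alpha_{23},\alpha_{31}$ are.

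Having produced $\beta\in\check{C}^0(\{U_j\times\widetilde W_{m,\nu}\},\mathcal{A}_{Y\times\mathrm{U}(1)}(\mathcal{L}^m))$ with $\delta\beta=\alpha$, conclusion $(ii)$ follows. I expect the main obstacle to be the removable-singularity verification: one must be careful that real-analyticity (not just smoothness or continuity) persists after the division, and that the enlargement from $W_{m,\nu}$ to $\widetilde W_{m,\nu}$ is harmless — the point is that $\widetilde W_{m,\nu}$ still contains exactly one $m$-th root of unity, so the denominator $1-s^m$ has a unique simple zero on the whole of $\widetilde W_{m,\nu}$ and the quotient is globally real-analytic there. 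A secondary bookkeeping point is to make the sheaf-theoretic identification precise: a section of $\mathcal{L}^m$ over $U_{31}\times\widetilde W_{m,\nu}$ that is locally constant in $Y$ is nothing but a real-analytic function of $s$, and the transition function relating the $U_3$ and $U_1$ trivializations is $s\mapsto s^m$ (respectively $s^{-m}$), which is what produces the weight in $A(s)$; once this identification is set up cleanly, the computation reduces verbatim to the two lemmas already proved in the toy model.
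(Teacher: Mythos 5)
Your proposal is correct and follows essentially the same route as the paper's proof: $(ii)\Rightarrow(i)$ by restriction, and $(i)\Rightarrow(ii)$ by the explicit fiberwise formulas of Lemma \ref{lem:ueda_lem_analogue_toymodel} together with the observation that $A(\zeta_m^\nu)=0$ makes $A(s)/(1-s^m)$ extend real-analytically across the unique $m$-th root of unity in $\widetilde{W}_{m,\nu}$. The one bookkeeping discrepancy is your weighted sum $A(s)=\alpha_{12}(s)+\alpha_{23}(s)+s^m\alpha_{31}(s)$: with the paper's conventions (coboundary $(\delta\beta)_{jk}=-\beta_j+t_{kj}\beta_k$, all components written in the $U_j$-frame) the correct combination is the unweighted $A(s)=\alpha_{12}(s)+\alpha_{23}(s)+\alpha_{31}(s)$ of Lemma \ref{lem:ueda_lem_analogue_toymodel} --- otherwise the displayed $\beta_j$ fail to satisfy $\delta\beta=\alpha$ when $s^m\neq 1$ --- although the two sums agree at $s=\zeta_m^\nu$, so the criterion in $(i)$ is unaffected.
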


\begin{proof}
As clearly $(ii)$ implies $(i)$, we show the converse. 
Assume $(i)$ holds. 
Denote by $\widetilde{W}_{m, \nu}^*$ the set $\widetilde{W}_{m, \nu}\setminus\{\zeta_m^\nu\}$. 
It follows from an argument in the proof of Lemma \ref{lem:ueda_lem_analogue_toymodel} that
the element 
$\beta=\{(U_j\times\widetilde{W}_{m, \nu}^*, \beta_j)\}
\in\check{C}^0(\{U_j\times \widetilde{W}_{m, \nu}^*\}, \mathcal{A}_{Y\times \mathrm{U}(1)}(\mathcal{L}^{m}))$ 
defined by 
$\beta_1(s) = -\frac{A(s)}{1-s^m}$, 
$\beta_2(s) = -\alpha_{31}(s) - \alpha_{23}(s) - \frac{s^m A(s)}{1-s^m}$, 
and 
$\beta_3(s) = -\alpha_{31}(s) - \frac{s^m A(s)}{1-s^m}$
satisfies $\delta\beta=\alpha|_{Y\times \widetilde{W}_{m, \nu}^*}$, 
where $A(s):=\alpha_{12}(s)+\alpha_{23}(s)+\alpha_{31}(s)$. 
As it follows from the argument in  the proof of Lemma \ref{lem:KS_lem_analogue_toymodel} that 
$A(\zeta_m^\nu)=0$, 
one can naturally extend $\beta_j$ to define a primitive $\beta\in\check{C}^0(\{U_j\times \widetilde{W}_{m, \nu}\}, \mathcal{A}_{Y\times \mathrm{U}(1)}(\mathcal{L}^{m}))$. 
\end{proof}

Under {\bf Condition $(*)$}, one have that the assertion $(i)$ of Proposition \ref{prop:mainlemma_toy} holds for any $\nu$. 
If the function 
\[
W_{m, \nu}\ni s\mapsto \left|\frac{s^m A(s)}{1-s^m}\right| \in \mathbb{C}
\]
is convex, whose complex analytical counterpart is actually the case in the actual situation in some sense, one have that
\[
\max_{s\in W_{m, \nu}}\left|\frac{s^m A(s)}{1-s^m}\right|
=\max_{s\in \partial W_{m, \nu}}\left|\frac{s^m A(s)}{1-s^m}\right|, 
\]
where $A$ is the one in the proof of Proposition \ref{prop:mainlemma_toy}. 
In this case, for a point $s_*\in \partial W_{m, \nu}$ which attains the maximum of the function above, it follows from 
Lemma \ref{lem:ueda_lem_analogue_toymodel} that 
\begin{eqnarray}
\max_{s\in W_{m, \nu}}\left|\frac{s^m A(s)}{1-s^m}\right|
&=&\left|\frac{s_*^m A(s_*)}{1-s_*^m}\right|
\leq \left|\frac{s_*^m A(s_*)}{\varepsilon d(1, s_*^m)}\right|\nonumber \\
&=&\frac{|A(s_*)|}{\varepsilon \cdot md(\zeta_m^\nu, s_*)}
=\frac{2}{\varepsilon}\cdot |A(s_*)|
\leq \frac{6}{\varepsilon} \cdot\max_{j, k}\sup_{s\in W_{m, \nu}}|\alpha_{jk}(s)|, \nonumber
\end{eqnarray}
where $\varepsilon$ is a positive constant such that $\varepsilon d(1, \sigma)\leq |1-\sigma|$. Note that $\varepsilon$ depends on neither $s$ nor $m$. 
As therefore the constant $6/\varepsilon$ depends on neither $s$ nor $m$, 
one can regard this estimate as an improved variant of Lemma \ref{lem:ueda_lem_analogue_toymodel} under {\bf Condition $(*)$}. 

\section{Main result for a suitable deformation configuration}\label{section:general}

According to the observation in \S \ref{section:our_strategy_in_toy_model}, 
we will pose a generalized configuration, which can be regarded as a generalization of a configuration of the blow-up model of $\mathbb{P}^2$ at suitably chosen nine points as we described in \S 1 (see the next section), and state our main result in this section. 

Let $S\subset \mathbb{C}^*$ be a neighborhood of ${\rm U}(1)$, 
$\mathcal{X}$ a complex manifold, 
$\pi\colon \mathcal{X}\to S$ a surjective holomorphic submersion, 
and $\mathcal{Y}\subset \mathcal{X}$ be a reduced subvariety of codimension $r>0$ such that 
there exist a connected reduced compact complex variery $Y$ and a biholomorphism $p\colon \mathcal{Y}\cong Y\times S$ such that the following diagram commutes. 
\[
  \xymatrix{
    \mathcal{Y} \ar[rr]^{p}_{\cong} \ar[rd]_{\pi|_{\mathcal{Y}}} & \ar@{}[d]^{\circlearrowleft} & Y\times S \ar[ld]^{{\rm Pr}_2} \\
     & S & 
  }
\]
Assume that $Y$ is either a manifold (i.e. a non-singular variety) or an analytic space of dimension one with only nodal singularities. 
Also assume that $r=1$ in the latter case (i.e. when $Y$ is singular). 
Take a finite open covering $\{U_j\}$ of $Y$ such that each $U_j$ is Stein, connected and simply connected which satisfies the following properties: 
For each $p\in Y_{\rm sing}$, there {\it uniquely} exists $U_j$ such that $p\in U_j$, and it holds that $\bigcup_{U_j\cap Y_{\rm sing}=\emptyset}U_j=Y\setminus Y_{\rm sing}$. 
Note that especially it holds that $Y_{\rm sing}\cap U_{jk}=\emptyset$ holds for any $j$ and $k$ if $j\not= k$. 
By considering a refinement of $\{U_j\}$ if necessary, we may assume that it also holds that $U_j\cap Y_{\rm sing}=\emptyset$ if 
$U_{jk}\not=\emptyset$ and $U_k\cap Y_{\rm sing}\not=\emptyset$. 

Define an open covering $\{\mathcal{U}_j\}$ of $\mathcal{Y}$ by letting $\mathcal{U}_j:=p^{-1}(U_j\times S)$. 
Take a neighborhood $\mathcal{V}_j$ of $\mathcal{U}_j$ in $\mathcal{X}$ such that $\mathcal{V}_j\cap \mathcal{Y}=\mathcal{U}_j$. 
We assume that $\mathcal{V}_{jk}:=\mathcal{V}_j\cap \mathcal{V}_k$ is empty if and only if $\mathcal{U}_{jk}:=\mathcal{U}_j\cap \mathcal{U}_k$ is empty 
by shrinking $\mathcal{V}_j$'s if necessary. 
We use the following notation for each $s\in S$: 
$X_s:=\pi^{-1}(s)$, $Y_s:=X_s\cap \mathcal{Y}=p^{-1}(Y\times\{ s\})$, 
$U_{j, s}:=X_s\cap \mathcal{U}_j = p^{-1}(U_j\times\{ s\})$, 
$V_{j, s}:=X_s\cap \mathcal{V}_j$, 
$U_{jk, s}:=U_{j, s}\cap U_{k, s}$, and 
$V_{jk, s}:=V_{j, s}\cap V_{k, s}$. 
We denote by $\mathbb{I}_s$ the holomorphically trivial line bundle on $Y_s$. 

In what follows, we give five assumptions. 
First one is the following: 

\begin{description}
\item[Assumption $1$] (Cohomology vanishing assumption) 
For any topologically trivial holomorphic line bundle $L$ on $Y$ which is not holomorphically trivial, 
it holds that $H^1(Y, \mathcal{O}_Y(L))=0$. 
\qed
\end{description}

The second one is: 

\begin{description}
\item[Assumption $2$](Flatness assumption for the normal bundles) 
There exists a holomorphic function 
\[
w_j=\left(
    \begin{array}{c}
      w_j^{(1)} \\
      w_j^{(2)} \\
      \vdots \\
      w_j^{(r)}
    \end{array}
  \right)\colon \mathcal{V}_j\to \mathbb{C}^r
\] 
such that the following six conditions hold: 
$(i)$ $\{w_j^{(\lambda)}\}_{\lambda=1}^r$ is a system of defining functions of $\mathcal{U}_j$ (i.e. $\{w_j^{(\lambda)}\}_{\nu=1}^r$ generates the defining ideal sheaf $I_{\mathcal{U}_{j}}\subset\mathcal{O}_{\mathcal{V}_{j}}$ of $\mathcal{U}_{j}$). 
$(ii)$ $\max_\lambda\max_j\sup_{\mathcal{V}_j}|w_j^{(\lambda)}|$  is bounded. 
$(iii)$ For each $j$ and $k$ with $\mathcal{U}_{jk}\not=\emptyset$, there exists a holomorphic function $t_{jk}^{(\lambda)}\colon S\to \mathbb{C}^*$ for each $\lambda=1, 2, \dots, r$ such that $dw_j^{(\lambda)} = t_{jk}^{(\lambda)}dw_k^{(\lambda)}$ holds on $\mathcal{U}_{jk}$, where we denote by the same letter $t_{jk}^{(\lambda)}$ the composition of $p\colon \mathcal{Y}\to Y\times S$, the second projection, and $t_{jk}^{(\lambda)}$. 
$(iv)$ When $U_k$ is singular, there exist coordinates $(x_k, y_k, s)$ of $\mathcal{V}_k$ such that $w_k^{(1)}=x_k\cdot y_k$ and $\pi(x_k, y_k, s)=s$. 
$(v)$ When $U_j$ is smooth (i.e. non-singular), $\mathcal{V}_j$ is embedded into $\mathbb{C}^r\times \mathcal{U}_j$ in the following manner: 
There exists a holomorphic retraction ${\rm Pr}_{\mathcal{U}_j}\colon \mathcal{V}_j\to \mathcal{U}_j$ (i.e. ${\rm Pr}_{\mathcal{U}_j}|_{\mathcal{U}_j}$ is the identity) such that $\pi\circ{\rm Pr}_{\mathcal{U}_j}=\pi$ and that 
$\mathcal{V}_j\ni x\mapsto (w_j(x), z_j\circ{\rm Pr}_{\mathcal{U}_j}(x), \pi(x))\in \mathbb{C}^r\times U_j\times S$ 
defines coordinates of $\mathcal{V}_j$, 
where $z_j$ is a coordinate of $U_j$. 
$(vi)$ When $U_k$ is singular, $\mathcal{V}_k$ is embedded into $\mathbb{C}^2\times S$ in the following manner: 
The images of the maps $\mathcal{V}_k\ni (x_k, y_k, s)\mapsto (x_k, 0, s)\in \mathcal{U}_k$ and $\mathcal{V}_k\ni (x_k, y_k, s)\mapsto (0, y_k, s)\in \mathcal{U}_k$ coincide with each of the irreducible components of $\mathcal{U}_k$. 
\qed
\end{description}

In what follows, we often denote simply by $w_j$ the function $w_j^{(1)}$ when $r=1$. 
Denote by $\mathcal{N}^*$ the holomorphic vector bundle on $\mathcal{Y}$ of rank $r$ 
defined by $\mathcal{N}^*:=[\{(\mathcal{U}_{jk}, T_{jk})\}]
\in H^1(\{\mathcal{U}_j\}, {\rm GL}_r(\mathcal{O}_{\mathcal{Y}}))$, 
where $T_{jk}:={\rm diag}(t_{jk}^{(1)}, t_{jk}^{(2)}, \dots, t_{jk}^{(r)})$ is the diagonal matrix, 
and, for each $s\in S$, by $N^*_s$ the holomorphic vector bundle  
$\mathcal{N}^*|_{Y_s}=[\{(U_{jk, s}, T_{jk}(s))\}]
\in H^1(\{U_{j, s}\}, {\rm GL}_r(\mathbb{C}))$ on $Y_s$ of rank $r$. 
We regard them as conormal bundles of $\mathcal{Y}$ and $Y_s$, respectively. 
Note that 
$\mathcal{N}^*=
\bigoplus_{\lambda=1}^r\mathcal{L}_\lambda$ and 
$N^*_s=
\bigoplus_{\lambda=1}^r L_{\lambda, s}$ for each $s\in S$, 
where $\mathcal{L}_\lambda$ is the holomorphic line bundle on $\mathcal{Y}$ defined by 
$\mathcal{L}_\lambda:=[\{(\mathcal{U}_{jk}, t_{jk}^{(\lambda)})\}]
\in H^1(\{\mathcal{U}_j\}, \mathcal{O}_{\mathcal{Y}}^*)$, 
and $L_{\lambda, s}$ is the holomorphic line bundle on $Y_s$ defined by 
$L_{\lambda, s}:=\mathcal{L}_\lambda|_{Y_s}=[\{(U_{jk, s}, t_{jk}^{(\lambda)}(s))\}]
\in H^1(\{U_{j, s}\}, \mathbb{C}^*)$ for each 
$\lambda=1, 2, \dots, r$. 

For a multi-index $a=(a_1, a_2, \dots, a_r)\in\mathbb{Z}^r$, 
we denote by $|a|$ the sum $a_1+a_2+\cdots+a_r$, 
by $\mathcal{L}^a$ the line bundle $\mathcal{L}_1^{a_1}\otimes \mathcal{L}_2^{a_2}\otimes \cdots \otimes \mathcal{L}_r^{a_r}$, 
by $L_s^a$ the line bundle $L_{1, s}^{a_1}\otimes L_{2, s}^{a_2}\otimes \cdots \otimes L_{r, s}^{a_r}$ for each $s\in S$, 
by $t_{jk}^a$ the product $\prod_{\lambda=1}^r(t_{jk}^{(\lambda)})^{a_\lambda}$, 
and by $dw_j^a$ the tensor $\bigotimes_{\lambda=1}^r(dw_j^{(\lambda)})^{\otimes a_\lambda}$.

Fix an integer $M_0\geq 1$. 
Denote by $\widehat{S}_m$ the subset of $S$ defined by 
\[
\widehat{S}_m:=\left\{s\in S\left| d(s, {\rm U}(1))<\frac{1}{M_0m}\right.\right\}
\]
 for each $m\in \mathbb{Z}_{>0}$, where $d$ is the distance of $S$ attached by restricting the distance of $\mathbb{C}^*$ which is induced by the Euclidean distance of $\mathbb{C}$ via the covering map $\mathbb{C}\in\xi\mapsto \exp(2\pi\sqrt{-1}\xi)\in \mathbb{C}^*$.

\begin{description}
\item[Assumption $3$] (Assumption on torsion points and the bound of the transition functions) 
The following three conditions hold: 
$(i)$ The set $\widehat{S}_1$ is a relatively compact subset of $S$. 
$(ii)$ For each positive integer $m$ and each $a=(a_1, a_2, \dots, a_r)\in\mathbb{Z}^r$ with $|a|=m$, 
$\{s\in \widehat{S}_m\mid L_s^a\cong \mathbb{I}_s\}$ is a subset of $\{s\in {\rm U}(1)\mid s^m=1\}$. 
$(iii)$ There exists a positive constant $\Theta>1$ such that $\max_{j, k}\sup_{s\in \widehat{S}_m}|t_{jk}^a(s)|\leq \Theta$ holds for any positive integer $m$ and any $a\in\mathbb{Z}^r$ with $|a|=m$. 
\qed
\end{description}

Note that {\bf Assumption $3$} $(iii)$ implies $|t_{jk}^{(\lambda)}(s)|=1$ for each $s\in {\rm U}(1)$ and each $\lambda$. 
For each positive integer $m$, denote by $S_m$ the subset of $\widehat{S}_m$ defined by 
\[
S_m:=\left\{s\in S\left| d(s, {\rm U}(1))<\frac{1}{4M_0m}\right.\right\}, 
\]
by $\zeta_{M_0 m}$ the point $\exp(2\pi\sqrt{-1}/(M_0 m))$ of $\mathrm{U}(1)\subset S$, 
by $W_{m, \nu}$ the closed subset of $S$ defined by 
\[
W_{m, \nu}:=\left\{\exp(2\pi\sqrt{-1}\xi)\in \widehat{S}_m\left|\ \xi\in\mathbb{C},\ \left|{\rm Re}\,\xi - \frac{\nu}{M_0m}\right| \leq \frac{1}{2M_0m},\ |{\rm Im}\, \xi| \leq \frac{1}{4M_0 m}\right.\right\}
\]
for $\nu=0, 1, 2, \dots, M_0 m-1$, 
and by $d_{a, \nu}\colon W_{m, \nu}\to \mathbb{R}_{\geq 0}$ the function defined by 
\[
d_{a, \nu}(s):=
\begin{cases}
  d(s, \zeta_{M_0 m}^\nu) &\text{if}\ L_{\zeta_{M_0 m}^\nu}^a\text{is holomorphically trivial}\\
  \frac{1}{2M_0 m} &\text{otherwise}
\end{cases}
\]
for each $m, \nu$, and $a$ with $|a|=m$. 
Note that one simply have that $\bigcup_{\nu=0}^{M_0m-1}W_{m, \nu}=\overline{S_m}$, 
$d_{a, \nu}(s)\leq
d(s, \{\sigma\in S\mid L_{\sigma}^a\cong\mathbb{I}_\sigma\})$, 
and that $m\cdot d_{a, \nu}(s)\leq d(1, s^m)$ for each positive integer $m$, multi-index $a$ with $|a|=m$, $\nu=0, 1, 2, \dots, M_0 m-1$, and $s\in W_{m, \nu}$ (Here we use {\bf Assumption $3$} $(ii)$). 

\begin{description}
\item[Assumption $4$] (Ueda-type Lemma) 
There exists a positive constant $K$ such that the following holds: 
For any positive integer $m$, multi-index $a=(a_1, a_2, \dots, a_r)\in\mathbb{Z}^r$ with $|a|=m$, 
$\nu=0, 1, 2, \dots, M_0 m-1$, 
$s\in W_{m, \nu}$, 
a $1$-cochain $\alpha=\{(U_{jk, s}, \alpha_{jk, s}dw_j^a)\}\in \check{Z}^1(\{U_{jk, s}\}, \mathcal{O}_{Y_s}(L_s^a))$, and 
a $0$-cochain $\beta=\{(U_{j, s}, \beta_{j, s}dw_j^a)\}\in \check{C}^0(\{U_{j, s}\}, \mathcal{O}_{Y_s}(L_s^a))$ such that 
$\alpha=\delta \beta:=\{(U_{jk, s}, (-\beta_{j, s}+t_{kj}^a(s)\cdot \beta_{k, s})dw_j^a)\}\in \check{Z}^1(\{U_{jk, s}\}, \mathcal{O}_{Y_s}(L_s^a))$, 
it holds that 
\[
m\cdot d_{a, \nu}(s)\cdot \|\beta\|_{s, a}\leq K\cdot \|\alpha\|_{s, a}, 
\]
where $\|\alpha\|_{s, a}:=\max_{j, k}\sup_{U_{jk, s}}|\alpha_{jk, s}|$ and $\|\beta\|_{s, a}:=\max_{j}\sup_{U_{j, s}}|\beta_{j, s}|$. 
\qed
\end{description}

The final assumption is the following: 

\begin{description}
\item[Assumption $5$] (Assumption on the type of $(Y_s, X_s)$ for $s\in {\rm U}(1)$) 
For each $s\in {\rm U}(1)$, the pair $(Y_s, X_s)$ is of infinite type in the sense of \cite{K6} when $Y$ is singular, and of \cite{K8} when $Y$ is smooth (see also \S \ref{section:review_ueda_theory}). 
\qed
\end{description}

The following theorem is the main result: 

\begin{theorem}\label{thm:main_general}
Let $s$ be an element of ${\rm U}(1) (\subset S)$. 
Then, under {\bf Assumption} $1, 2, 3, 4$, and $5$, the following holds by shrinking $V_{j, s}$'s if necessary: 
there exists a function 
\[
\widehat{w}_{j, s}=\left(
    \begin{array}{c}
      \widehat{w}_{j, s}^{(1)} \\
      \widehat{w}_{j, s}^{(2)} \\
      \vdots \\
      \widehat{w}_{j, s}^{(r)}
    \end{array}
  \right)\colon V_{j, s}\to \mathbb{C}^r
\] 
on each $V_{j, s}$ 
such that 
$\{\widehat{w}_j^{(\lambda)}\}_{\lambda=1}^r$ generates the defining ideal sheaf $I_{U_{j, s}}\subset\mathcal{O}_{V_{j, s}}$ of $U_{j, s}$, 
and that $\widehat{w}_{j, s}^{(\lambda)} = t_{jk}^{(\lambda)}(s)\cdot \widehat{w}_{k, s}^{(\lambda)}$ 
holds on each $V_{jk, s}$ for $\lambda=1, 2, \dots, r$. 
\end{theorem}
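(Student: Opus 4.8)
The plan is to fix $s \in \mathrm{U}(1)$ and carry out the Ueda-type linearization scheme of \S\ref{section:toy_model_single}, but with the estimates improved by moving into the deformation family and invoking the maximum principle, exactly as sketched in \S\ref{section:our_strategy_in_toy_model}. Concretely, I would seek the new defining functions $\widehat{w}_{j,s}^{(\lambda)}$ in the form of a functional equation
\[
w_j^{(\lambda)} = \widehat{w}_{j,s}^{(\lambda)} + \sum_{|a|\geq 2} F_{j,a}^{(\lambda)}(z_j)\cdot (\widehat{w}_{j,s})^a
\]
on each $V_{j,s}$, where the coefficient functions $F_{j,a}^{(\lambda)}$ are holomorphic on $U_{j,s}$ and are to be determined inductively on $m=|a|$. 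As in the toy model, substituting this Ansatz into the transition relation $\widehat{w}_{j,s}^{(\lambda)} = t_{jk}^{(\lambda)}(s)\,\widehat{w}_{k,s}^{(\lambda)}$ and comparing coefficients of $(\widehat{w}_{j,s})^a$ shows that the desired relation holds to all orders along $U_{jk,s}$ precisely when, for each $m$, the cochains $\{(U_{j,s},F_{j,a}^{(\lambda)}dw_j^a)\}$ solve a $\delta$-equation
\[
\delta\{(U_{j,s},F_{j,a}^{(\lambda)}dw_j^a)\} = \{(U_{jk,s},h_{kj,a}^{(\lambda)}dw_j^a)\}\in \check{Z}^1(\{U_{j,s}\},\mathcal{O}_{Y_s}(L_s^a)),
\]
where each $h_{kj,a}^{(\lambda)}$ depends only on the $f$'s and on the previously constructed $F$'s of lower order. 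Step~1 is solvability of this equation: the class of the right-hand side is (a component of) the $m$-th Ueda class, which vanishes by {\bf Assumption~5} together with the observation that $(Y_s,X_s)$ being of infinite type forces the obstruction to vanish at each stage, so a solution exists; when $L_s^a\not\cong\mathbb{I}_s$ the relevant cohomology group vanishes outright by {\bf Assumption~1}.

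Step~2 — the $L^\infty$ estimate of the $F_{j,a}^{(\lambda)}$ so that the formal series converges — is the heart of the argument and where the deformation enters. Rather than estimating $F_{j,a}^{(\lambda)}$ fiberwise on $Y_s$, I would run the whole induction over a shrinking family of neighborhoods $W_{m,\nu}\subset S$ of the $M_0m$-th roots of unity: build the cochains $F_{j,a}^{(\lambda)}$ as sections over $U_{j}\times W_{m,\nu}$, holomorphic in all variables. For each $m$, choosing $S$ small enough (depending on $m$) guarantees that on each $W_{m,\nu}$ the only torsion point with $s^m=1$ is $\zeta_{M_0m}^\nu$, so for $s$ away from that point the $\delta$-equation has a unique solution given by an explicit Cramer-type formula in the $\alpha_{jk}$'s divided by $1-t^a(s)$ (as in Lemma~\ref{lem:ueda_lem_analogue_toymodel}); at $s=\zeta_{M_0m}^\nu$ itself the numerator vanishes (by the cohomological compatibility, the analogue of the computation in Lemma~\ref{lem:KS_lem_analogue_toymodel}), so the solution extends holomorphically across. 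Then the maximum principle applied to $|F_{j,a}^{(\lambda)}|$ on $W_{m,\nu}$, combined with {\bf Assumption~4} applied on $\partial W_{m,\nu}$, yields a bound $\|F_{j,a}^{(\lambda)}\|\le C\cdot\|h_a\|$ with $C$ independent of $m$, $\nu$, and $a$ — this is the improvement over the naive $|1-t^a(s)|^{-1}$ bound. Feeding this uniform bound into the standard majorant-series recursion of \cite[\S4]{U} shows that the majorant series $A(X)=X+\sum_{m\ge2}A_mX^m$ defined by $\sum A_m X^m = \Theta\cdot MA(X)^2/(1-A(X))$ (using the uniform bound $\Theta$ from {\bf Assumption~3}(iii)) dominates $\max_j\sup|F_{j,a}^{(\lambda)}|$ and has positive radius of convergence.

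Finally, with a convergent majorant in hand, the formal functional equation has an honest holomorphic solution on a shrunk $V_{j,s}$ by the implicit function theorem, and the resulting $\widehat{w}_{j,s}^{(\lambda)}$ generate $I_{U_{j,s}}$ (since their linear parts coincide with those of $w_j^{(\lambda)}$) and satisfy $\widehat{w}_{j,s}^{(\lambda)}=t_{jk}^{(\lambda)}(s)\widehat{w}_{k,s}^{(\lambda)}$ on $V_{jk,s}$, proving the theorem. The main obstacle I anticipate is the bookkeeping in Step~2: one cannot fix $S$ once and for all, because {\bf Condition~$(**)$} of \S\ref{section:our_strategy_in_toy_model} — that the inductively defined right-hand sides are coboundaries on each fiber over all of $S$ — is not known to hold globally; it only holds on the $W_{m,\nu}$ after shrinking $S$ in an $m$-dependent way. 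Thus the induction must interleave ``solve at level $m$ after shrinking $S$'' with ``extract a uniform estimate via the maximum principle on $W_{m,\nu}$,'' and one must check carefully that the shrinking of $S$ does not destroy the estimates obtained at earlier levels — this is where {\bf Assumption~3}(i), giving relative compactness of $\widehat{S}_1$, and the separation properties of the $W_{m,\nu}$ are used. A secondary technical point is verifying that the convexity/maximum-principle step really applies in the genuinely complex-analytic (rather than toy-model real-analytic) setting, i.e. that $\log$ of the relevant quotient is subharmonic on $W_{m,\nu}$ so that its maximum is attained on the boundary; this should follow since the quotient is holomorphic on $W_{m,\nu}$ away from the single removable point $\zeta_{M_0m}^\nu$.
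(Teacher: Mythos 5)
Your proposal follows essentially the same route as the paper's proof in \S 5: the same functional-equation Ansatz, the same inductive construction of the $F^{(\lambda)}_{j,a}$ over the shrinking base neighborhoods $S_m$ (solving by cohomology vanishing away from the torsion points and extending across $\zeta_{M_0m}^\nu$ via the vanishing of the Ueda class guaranteed by {\bf Assumption 5}), the same maximum-principle-on-$W_{m,\nu}$ improvement of the Ueda-type estimate of {\bf Assumption 4}, and the same majorant-series plus implicit-function-theorem conclusion. The only cosmetic difference is that the paper arranges the vanishing of the numerator at the central fiber by first subtracting a Kodaira--Spencer primitive constant in $s$ and then dividing $\gamma_{jk}$ by $\xi-\xi_0$, and its majorant equation is correspondingly more elaborate, but these are implementation details of the scheme you describe.
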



\section{Two examples of configurations which satisfies five assumptions in Theorem \ref{thm:main_general}}

Before we give the proof of Theorem \ref{thm:main_general} in 
\S \ref{section:prf_main_thm}, 
we will give two examples of configurations which satisfy five assumptions in \S \ref{section:general}, so that one can apply Theorem \ref{thm:main_general} to the examples in \S 1. 

\subsection{A configuration in which $Y_s$'s are smooth elliptic curves}\label{section:setting_smooth}

Let $Z$ be a manifold of dimension $r+1$, $L$ be a holomorphic line bundle on $Z$, 
$D_1^0, D_2^0, \dots, D_r^0$ be prime divisors of $Z$ such that $\mathcal{O}_Z(D_\lambda^0)\cong \mathcal{O}_Z(L)$ for any $\lambda=1, 2, \dots, r$. 
Assume that $Y^0:=D_1^0\cap D_2^0\cap \cdots \cap D_r^0$ is a smooth elliptic curve,  $D_\lambda^0$'s intersects to each other transversally along $Y^0$, 
and that the intersection number $(L. Y^0)$ is equal to $1$. 
Denote by $p_0\in Y^0$ the point such that $\mathcal{O}_{Y^0}(L|_{Y^0})\cong \mathcal{O}_{Y^0}(p_0)$. 

In what follows, we identify $\Sigma:={\rm Pic}^0(Y^0)$ with $Y^0$ via the map 
$Y^0\ni q\mapsto \mathcal{O}_{Y^0}(q-p_0)\in \Sigma$. 
Let $\overline{\mathcal{X}}$ be the blow-up of $Z\times \Sigma$ along the subvariety 
\[
\{(z, s)\in Y^0\times \Sigma\mid z\ \text{coincides with}\ s\ \text{via the identification above}\}
\subset Z\times \Sigma, 
\]
and $\pi\colon \overline{\mathcal{X}}\to \Sigma$ be the morphism obtained by composing the blow-up morphism and the second projection $Z\times \Sigma\to \Sigma$. 
Note that $\pi$ is a surjective holomorphic submersion, 
and that each fiber $X_s:=\pi^{-1}(s)$ is the blow-up of $Z$ at $s$ for each $s\in \Sigma$. 
Denote by $\overline{\mathcal{D}}_\lambda\subset \overline{\mathcal{X}}$ the strict transform of $D^0_\lambda\times \Sigma$ for each $\lambda=1, 2, \dots, r$, 
and by $\overline{\mathcal{Y}}$ the intersection $\bigcap_{\lambda=1}^r\overline{\mathcal{D}}_\lambda$. 
It is easily observed that $\{D_{\lambda, s}:=X_s\cap \overline{\mathcal{D}}_\lambda\}_{\lambda=1}^r$ intersects transversally along $Y_s:=X_s\cap \overline{\mathcal{Y}}$ for each $s\in \Sigma$. 
Note also that $N_{D_{\lambda, s}/X_s}|_{Y_s}\cong L_s^{-1}$ holds for any $\lambda$ (and thus it holds that $N_{Y_s/X_s}^*\cong L_s^{\oplus r}$) for each $s\in \Sigma$ ($=Y^0$ by the identification we fixed in the above), 
where $L_s$ is the line bundle on $Y_s$ such that $\mathcal{O}_{Y_s}(L_s)\cong \mathcal{O}_{Y^0}(s-p_0)$ holds via the natural isomorphism between $Y_s$ and $Y^0$ 
(i.e. via the composition of the blow-up morphism and the first projection $Z\times \Sigma\to Z$). 

Take an element $\tau$ of the upper half plane 
$\mathbb{H}:=\{t\in\mathbb{C}\mid {\rm Im}\,t>0\}$ such that 
$Y:=\mathbb{C}/\langle 1, \tau \rangle$ is isomorphic to $Y^0$. 
Let $\gamma_1$ and $\gamma_2$ be generators of $\pi_1(Y^0, p_0)$ which corresponds to the deck transformations $+1$ and $+\tau$ of the universal covering $\mathbb{C}\to Y^0$. 
Denote by $\mathcal{L}$ the line bundle on 
$Y^0\times Y$ such that, for each $y\in Y$, the restriction $L_y:=\mathcal{L}|_{Y^0\times\{y\}}$ of it to $Y^0\times\{y\}$ is the unitary flat line bundle which corresponds to the unitary representation $\pi_1(Y^0, *)\to \mathrm{U}(1)$ defined by 
$\gamma_1\mapsto \exp(2\pi\sqrt{-1}\cdot(-q(y)))$ and 
$\gamma_2\mapsto \exp(2\pi\sqrt{-1}p(y))$ 
via the first projection $Y^0\times\{y\}\to Y^0$, 
where 
$p(y)$ and $q(y)$ are the elements of $\mathbb{R}/\mathbb{Z}$ such that $y=[p(y)+q(y)\cdot \tau]$. 
Note that we can naturally regard  $\mathcal{L}$ as a holomorphic line bundle, 
since each line bundle $L_y$ is isomorphic (on $Y^0\times \{y\}$ via the first projection as a holomorphic line bundle) to the $\mathbb{C}^*$-flat line bundle on $Y^0$ which corresponds to the $\mathbb{C}^*$-representation $\pi_1(Y^0, *)\to \mathbb{C}^*$ defined by 
$\gamma_1\mapsto 1$ and $\gamma_2\mapsto \exp(2\pi\sqrt{-1}(p(y)+q(y)\cdot \tau))$. 
It follows from the fact that ${\rm Pic}^0(Y^0)$ has a property as the coarse moduli (see \cite[\S A.1]{KU} for example), 
there exists an isomorphism $i\colon Y\to \Sigma$ such that 
$({\rm id}_{Y^0}\times i)^*N_{\overline{\mathcal{D}}_\lambda/\overline{\mathcal{X}}}^{-1}|_{\overline{\mathcal{Y}}}\cong \mathcal{L}$ via the isomorphism between $\mathcal{Y}$ and $Y^0\times \Sigma$ which is defined by using the blow-up morphism (Note that it does not depend on $\lambda$, since $N_{\overline{\mathcal{D}}_\lambda/\overline{\mathcal{X}}}^{-1}|_{\overline{\mathcal{Y}}}$'s are isomorphic to each other for $\lambda=1, 2, \dots, r$). 

In what follows, we identify $Y$ with $Y^0$ (and thus also with $\Sigma$) via $i$. 
Note that the point $p_0\in Y^0$ is identified with the point $[0]\in Y$. 
Note also that it follows from a simple argument that the following two conditions are equivalent to each other for an element of $y\in \Sigma=Y^0=Y$ under our identifications: 
$(i)$ $y$ is torsion as an element of $Y$ (i.e. $[m(p(y)+q(y)\cdot \tau)]=[0]\in Y$ for some $m\in\mathbb{Z}\setminus\{0\}$), 
$(ii)$ $y$ is torsion as an element of $\Sigma$ (i.e. $L_y^m$ is holomorphically trivial for some $m\in\mathbb{Z}\setminus\{0\}$). 
In such case, we simply say that $y$ is {\it torsion}. 

Fix a rational number $q_0\in \mathbb{Q}$ with $0\leq q_0<1$. 
Set 
\[
\ell_0:=\{[p+q_0\cdot \tau]\in Y\mid p\in \mathbb{R}\}\ \ 
\text{and}\ 
\ell_\infty:=\left\{\left.\left[p+\left(q_0+\frac{1}{2}\right)\cdot \tau\right]\in Y\right| p\in \mathbb{R}\right\}, 
\]
and regard them as subsets of $\Sigma$ via the identification. 
Define a base space $S$ by $S:=\Sigma\setminus \ell_\infty$. 
We regard $S$ as a neighborhood $\{s\in\mathbb{C}^*\mid \exp(-\pi{\rm Im}\,\tau)<|s|<\exp(\pi{\rm Im}\,\tau)\}$ 
of $\mathrm{U}(1)$ in $\mathbb{C}^*$ by using the embedding 
\[
S\ni [z]\mapsto \exp(2\pi\sqrt{-1}(z-q_0\cdot \tau))\in \mathbb{C}^*, 
\]
where we are regarding $z$ as an element of $\{p+q\tau\in \mathbb{C}\mid p, q\in\mathbb{R},\ |q-q_0|<1/2\}$. 
In what follows, for each element $s\in S$, we let $p(s)$ and $q(s)$ be the real numbers 
such that $|q(s)-q_0|<1/2$ and $s=[p(s)+q(s)\cdot \tau]$ hold as elements of $Y$ 
($p(s)$ is determined modulo $\mathbb{Z}$, whereas $q(s)$ is determined uniquely). 

We define the total space $\mathcal{X}$ by $\mathcal{X}:=\pi^{-1}(S)\subset \overline{\mathcal{X}}$, 
and the subvariety $\mathcal{Y}$ by $\mathcal{Y}:=\overline{\mathcal{Y}}\cap \mathcal{X}$. 
Denote the restriction $\pi|_{\mathcal{X}}$ simply by the same letter $\pi$, 
and by $\mathcal{D}_\lambda$ the intersection $\overline{\mathcal{D}}_\lambda\cap \mathcal{X}$ for each $\lambda=1, 2, \dots, r$. 
Let $\{U_j\}, \{\mathcal{U}_j\}$, and $\{U_{j, s}\}$ be those as in \S \ref{section:general}. 
In the rest of this subsection, we will show that $\pi\colon\mathcal{X}\to S$ satisfies {\bf Assumption} $1, 2, 3, 4$, and $5$ under the following: 

\begin{description}
\item[Assumption $5$'] The following holds for any torsion element $s\in \Sigma$: 
There exist a complex manifold $B_s$ of dimension $r$, 
$r$ divisors $\{E_\lambda\}_{\lambda=1}^r$ of $B_s$, 
a neighborhood $V_s$ of $Y_s$ in $X_s$, 
and a surjective proper holomorphic map $b_s\colon V_s\to B_s$ such that 
$Y_s$ is a fiber of $b_s$, 
$E_\lambda$'s intersects transversally to each other at the point $b_s(Y_s)$, 
and that $b_s^*E_\lambda=m_\lambda D_{\lambda, s}$ holds as divisors for some positive integer $m_\lambda$ for each $\lambda$.  
\qed
\end{description}

\subsubsection{Assumption $1$}

As $Y$ is an elliptic curve, the assertion of {\rm Assumption $1$} clearly holds. 

\subsubsection{Assumption $2$}

As is obtained by a simple observation, the line bundle $L_s$ is holomorphically isomorphic to the $\mathbb{C}^*$-flat line bundle which corresponds to the $\mathbb{C}^*$-representation $\pi_1(Y^0, *)\to \mathbb{C}^*$ defined by $\gamma_1\mapsto \exp(2\pi\sqrt{-1}(-q_0))$ and $\gamma_2\mapsto \exp(2\pi\sqrt{-1}(p(s)+(q(s)-q_0)\cdot \tau))$. 
Thus we have that, by taking a refinement of $\{U_j\}$ if necessary, there exists a holomorphic map $t_{jk}\colon S\to \mathbb{C}^*$ for each $j$ and $k$ such that 
each $t_{jk}(s)$ coincides with either $1$, $\exp(\pm2\pi\sqrt{-1}q_0)$, $\exp(\pm 2\pi\sqrt{-1}(p(s)+(q(s)-q_0)\cdot \tau))$, or $\exp(\pm2\pi\sqrt{-1}q_0)\cdot \exp(\pm 2\pi\sqrt{-1}(p(s)+(q(s)-q_0)\cdot \tau))$, and that $L_s=[\{(U_{jk, s}, t_{jk}(s))\}]$ holds for each $s\in S$. 

Take a neighborhood $\mathcal{V}_j$ of each $\mathcal{U}_j$ by using \cite[Corollary 1]{Siu}. 
Then we have that each $\mathcal{V}_j$ is Stein, and there exists a defining functions system $\{w_j\}$ and a holomorphic retraction ${\rm Pr}_{\mathcal{U}_j}$ such that the assertions $(ii)$ and $(v)$ in {\bf Assumption} $2$ hold. 
By a standard argument, one can modify $\{w_j\}$ 
so that $w_j^{(\lambda)}$ is a defining function of $\mathcal{D}_\lambda\cap \mathcal{V}_j$ on $\mathcal{V}_j$, and that $dw_j^{(\lambda)}=t_{jk}dw_k^{(\lambda)}$ holds on each $\mathcal{U}_{jk}$, since $N_{\mathcal{D}_\lambda/\mathcal{X}}^{-1}|_{\mathcal{Y}}=[\{(\mathcal{U}_{jk}, t_{jk})\}]$ as elements of $H^1(\{\mathcal{U}_j\}, \mathcal{O}_{\mathcal{Y}}^*)$ for each $\lambda$ (see \cite[\S 2.2]{K8} for example). 
Thus we have that the assertions $(i)$ and $(iii)$ in {\bf Assumption} $2$ also hold by letting $t_{jk}^{(\lambda)}:=t_{jk}$ for each $\lambda$. 

\subsubsection{Assumption $3$}

Fix a sufficiently large integer $m_*$ such that $m_*> \max\left\{2,\ ({\rm Im}\,\tau)^{-1}\right\}$. 
By letting $M_0:=m_*\cdot \min\{m\in\mathbb{Z}\mid m>0, mq_0\in\mathbb{Z}\}$, we here show the assertions $(i)$, $(ii)$, and $(iii)$ in Assumption $3$. 

First, the assertion $(i)$ follows directly by our definition of $M_0$. 
Here let us note that $\mathrm{U}(1)$ is identified with $\ell_0$ in the configuration in this subsection. 

Next, for the assertion $(ii)$, we will show that 
$\{s\in \widehat{S}_m\mid L_s^m\cong \mathbb{I}_s\}\subset \{s\in {\rm U}(1)\mid s^m=1\}$ holds 
for each positive integer $m$ 
(Note that $t_{jk}^a=t_{jk}^{|a|}$ and $L_s^a=L_s^{|a|} (=L_s^{\otimes |a|})$ hold for any $a\in\mathbb{Z}^r$). 
As $L_s^m$ is holomorphically isomorphic to the unitary flat line bundle which corresponds to the unitary representation $\pi_1(Y_0, *)\to \mathrm{U}(1)$ defined by 
$\gamma_1\mapsto \exp(2\pi\sqrt{-1}(mq(s)))$ and 
$\gamma_2\mapsto \exp(2\pi\sqrt{-1}(mp(s)))$, 
one have that it is holomorphically trivial if and only if both $mp(s)$ and $mq(s)$ are integers, since a unitary flat line bundle on a compact complex manifold is trivial as a unitary flat line bundle if and only if it is holomorphically trivial (see also \cite[Proposition 2.2]{K8}). 
Therefore one have that $M_0mq(s)$ is an integer which is a multiple of $m_*$ in this case. 
As it follows by definition that $M_0q_0$ is also an integer which is a multiple of $m_*$, we have that either $q(s)=q_0$ or $|mM_0q(s)-mM_0q_0|\geq m_*$ holds if $L_s^m$ is holomorphically trivial. 
The assertion $(ii)$ follows from this, since 
\[
|mM_0q(s)-mM_0q_0|
= \frac{mM_0}{{\rm Im}\,\tau}\cdot (|q(s)-q_0|\cdot {\rm Im}\,\tau)<
mM_0 m_*\cdot d(s, \mathrm{U}(1))<m_*
\]
holds for any $s\in \widehat{S}_m$ (Note that $d(s, \mathrm{U}(1))=|{\rm Im}(p(s)+(q(s)-q_0)\tau)|$ by definition). 

Finally, we show the assertion $(iii)$. 
Let $m$ be a positive integer. 
Take an element $s\in \widehat{S}_m$. 
As  $t_{jk}(s)$ is either $1$, $\exp(\pm2\pi\sqrt{-1}q_0)$, $\exp(\pm 2\pi\sqrt{-1}(p(s)+(q(s)-q_0)\cdot \tau))$, or $\exp(\pm2\pi\sqrt{-1}q_0)\cdot \exp(\pm 2\pi\sqrt{-1}(p(s)+(q(s)-q_0)\cdot \tau))$, one have that 
\[
|t_{jk}(s)|^m\leq
\max_\pm|\exp(\pm 2m\pi\sqrt{-1}(q(s)-q_0)\cdot \tau)|
\leq \exp(2m\pi |q(s)-q_0|\cdot {\rm Im}\,\tau)). 
\]
As $|q(s)-q_0|\cdot {\rm Im}\,\tau\leq (M_0m)^{-1}$ holds for any $s\in \widehat{S}_m$, 
one have the assertion by letting $\Theta:=\exp\left(2\pi/M_0\right)$.

\subsubsection{Assumption $4$}

Take a positive integer $m$, $\nu\in \{0, 1, 2, \dots, M_0 m-1\}$, and an element $s\in W_{m, \nu}$. 
When $s\in \mathrm{U}(1)$, 
it follows by \cite[Lemma 4]{U} that there exists a constant $K$ which depends only on the choice of $Y$ and $\{U_j\}$ such that the inequality 
\[
d(1, s^m)\cdot\max_{j}\sup_{U_{j, s}}|\beta_{j, s}|
\leq K\cdot \max_{j, k}\sup_{U_{jk, s}}|\alpha_{jk, s}|
\]
holds for 
any multi-index $a$ with $|a|=m$, 
any $1$-cochain $\alpha=\{(U_{jk, s}, \alpha_{jk, s}dw_j^a)\}\in \check{Z}^1(\{U_{jk, s}\}, \mathcal{O}_{Y_s}(L_s^m))$ and 
any $0$-cochain $\beta=\{(U_{j, s}, \beta_{j, s}dw_j^a)\}\in \check{C}^0(\{U_{j, s}\}, \mathcal{O}_{Y_s}(L_s^m))$ such that 
$\alpha=\delta \beta$, 
since each $t_{jk}(s)$ is an element of $\mathrm{U}(1)$ by {\bf Assumption $3$} $(iii)$ (Note that such $\beta$ is unique when $d(1, s^m)\not=0$ by {\bf Assumption $3$} $(ii)$, since $L_s^m$ is not holomorphically trivial in such case). 
As $md_{a, \nu}(s)\leq = d(1, s^m)$, one have the assertion of {\bf Assumption $4$} for such $s$. 

In what follows, 
we investigate the case where $s$ need not to be an element of $\mathrm{U}(1)$ by fixing such an element $s\in W_{m, \nu}$. 
Take a local trivialization $e_j$ of $L_s$ one each $U_{j, s}$ such that 
$e_j=t_{jk}(s)\cdot e_k$ holds on each $U_{jk, s}$. 

As $L_s$ is isomorphic to the line bundle which corresponds to the unitary representation 
$\pi_1(Y^0, *)\to \mathrm{U}(1)$ generated by 
$\gamma_1\mapsto \exp(2\pi\sqrt{-1}\cdot(-q(s)))$ and 
$\gamma_2\mapsto \exp(2\pi\sqrt{-1}p(s))$, 
it follows that there exists another local trivialization $\widehat{e}_j$ of $L_s$ on each $U_{j, s}$ such that $\widehat{e}_j=\widehat{t}_{jk}\cdot \widehat{e}_k$ holds on each $U_{jk, s}$, 
where $\widehat{t}_{jk}$ is a constant function valued in $\mathrm{U}(1)$. 
Note that such $\widehat{e}_j$'s are constructed by considering the function 
$\sigma\colon z\mapsto \exp(2\pi\sqrt{-1}\cdot (q(s)-q_0)z)$ on the universal covering of $Y$. 
Therefore we may assume that the ratio $\sigma_j:=\widehat{e}_j/e_j$ coincides with this function $\sigma$ restricted to a suitable open subset of $\mathbb{C}$ via the covering map. 
As $|\sigma(z)|=\exp(-2\pi(q(s)-q_0){\rm Im}\,z)$, 
one have the existence of a constant $M_1>1$ which only depends on $M_0$ and the manner how to choose domains of the universal covering of $Y_s$ such that the restriction of $\sigma$ to it coincides with $\sigma_j$ (and thus depends on neither $m$ nor $s\in \widehat{S}_m$) such that 
\[
M_1^{-\frac{1}{m}} \leq \inf_{U_{j, s}}|\sigma_j(z)|\leq \max_{U_{j, s}}|\sigma_j(z)| \leq M_1^{\frac{1}{m}}
\]
holds for each $j$. 
Again by the inequality $md_{a, \nu}(s)\leq = d(1, s^m)$, it is sufficient to show the following: 

\begin{lemma}\label{lem:ueda_4_new}
Tere exists a constant $K$ which depends on neigher  $m$ nor $s\in \overline{S_m}$ such that, 
for any $1$-cochain $\alpha=\{(U_{jk, s}, \alpha_{jk, s}e_j^m)\}\in \check{Z}^1(\{U_{jk, s}\}, \mathcal{O}_{Y_s}(L_s^m))$ and 
any $0$-cochain $\beta=\{(U_{j, s}, \beta_{j, s}e_j^m)\}\in \check{C}^0(\{U_{j, s}\}, \mathcal{O}_{Y_s}(L_s^m))$ with 
$\alpha=\delta \beta:=\{(U_{jk, s}, (-\beta_{j, s}+t_{kj}^m(s)\cdot \beta_{k, s})e_j^m)\}\in \check{Z}^1(\{U_{jk, s}\}, \mathcal{O}_{Y_s}(L_s^m))$, it holds that 
\[
m\cdot \min_{a, b\in\frac{1}{m}\mathbb{Z}}|(p(s)+q(s)\tau)-(a+ b\tau)|\cdot \|\beta\|_{s, m}\leq K\cdot \|\alpha\|_{s, m}, 
\]
where
$\|\alpha\|_{s, m}:=\max_{j, k}\sup_{U_{jk, s}}|\alpha_{jk, s}|$ and 
$\|\beta\|_{s, m}:=\max_{j}\sup_{U_{j, s}}|\beta_{j, s}|$. 
\end{lemma}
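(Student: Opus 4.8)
The plan is to deduce Lemma \ref{lem:ueda_4_new} from Ueda's estimate \cite[Lemma 4]{U}, applied to the \emph{unitary} flat line bundle $L_s^m$, by transporting the problem from the $\mathbb{C}^*$-flat trivialization $\{e_j\}$ (in which $|t_{jk}(s)|$ need not equal $1$ once $s\notin\mathrm{U}(1)$) to the unitary trivialization $\{\widehat{e}_j\}$ already constructed before the statement, and by controlling the resulting distortion uniformly in $m$ and $s$ through the functions $\sigma_j=\widehat{e}_j/e_j$.

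First I would settle the degenerate case. If $L_s^m$ is holomorphically trivial then, as observed above, $mp(s)$ and $mq(s)$ both lie in $\mathbb{Z}$, so $\min_{a,b\in\frac{1}{m}\mathbb{Z}}|(p(s)+q(s)\tau)-(a+b\tau)|=0$ and the desired inequality holds for every $\beta$. Hence I may assume $L_s^m\not\cong\mathbb{I}_s$; since $L_s^m$ is then a non-trivial line bundle of degree $0$ on the elliptic curve $Y_s$, we have $H^0(Y_s,\mathcal{O}_{Y_s}(L_s^m))=0$, so a $0$-cochain $\beta$ with $\delta\beta=\alpha$ is unique, and the given one is therefore \emph{the} primitive of $\alpha$.

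Next I would rewrite the data in the unitary frame. Setting $\widehat{\alpha}_{jk,s}:=\sigma_j^{-m}\alpha_{jk,s}$ and $\widehat{\beta}_{j,s}:=\sigma_j^{-m}\beta_{j,s}$, the cochains $\{(U_{jk,s},\widehat{\alpha}_{jk,s}\widehat{e}_j^{\,m})\}$ and $\{(U_{j,s},\widehat{\beta}_{j,s}\widehat{e}_j^{\,m})\}$ are just $\alpha$ and $\beta$ re-expressed in the frame $\{\widehat{e}_j^{\,m}\}$, whose transition functions $\widehat{t}_{jk}^{\,m}$ are constants in $\mathrm{U}(1)$; in particular $\delta\widehat{\beta}=\widehat{\alpha}$ and $\widehat{\beta}$ is the unique primitive of $\widehat{\alpha}$ for the unitary flat bundle $L_s^m$. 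By the estimates $M_1^{-1/m}\le|\sigma_j|\le M_1^{1/m}$ on $U_{j,s}$ recorded above --- with $M_1>1$ depending on neither $m$ nor $s\in\widehat{S}_m$ --- one has $|\sigma_j|^{\pm m}\in[M_1^{-1},M_1]$, hence $\|\widehat{\alpha}\|_{s,m}\le M_1\|\alpha\|_{s,m}$ and $\|\widehat{\beta}\|_{s,m}\ge M_1^{-1}\|\beta\|_{s,m}$. Then I would apply \cite[Lemma 4]{U} to the non-trivial unitary flat bundle $L_s^m$ in the trivialization $\{\widehat{e}_j^{\,m}\}$: this provides a constant $K_0$ depending only on $Y$ and $\{U_j\}$ such that
\[
m\cdot\min_{a,b\in\frac{1}{m}\mathbb{Z}}|(p(s)+q(s)\tau)-(a+b\tau)|\cdot\|\widehat{\beta}\|_{s,m}\le K_0\cdot\|\widehat{\alpha}\|_{s,m},
\]
using that the left-hand coefficient is, up to a fixed normalization, the value $d_{{\rm Pic}^0(Y_s)}(\mathbb{I}_s,L_s^m)$ of the invariant distance --- namely the lattice distance of $mp(s)+mq(s)\tau$ to $\langle 1,\tau\rangle$. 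Combining the last three inequalities yields the assertion with $K:=K_0M_1^2$, which depends on neither $m$ nor $s$.

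I expect the main obstacle to be not any single hard estimate but the uniformity bookkeeping: one must ensure that $M_1$, the constant $K_0$ from \cite[Lemma 4]{U}, the normalization of the invariant distance, and the choice of a branch of each $\sigma_j$ over $U_{j,s}$ are all genuinely independent of the pair $(m,s)$, so that the final $K$ is too. In particular one needs \cite[Lemma 4]{U} in the form ``constant uniform over the (unitary) flat bundle and over $m$'', which is available because $Y$ is an elliptic curve and is the same input already used for $s\in\mathrm{U}(1)$ in the paragraph preceding the statement; verifying that this uniform form indeed applies verbatim to each $L_s^m$ is the delicate point of the argument.
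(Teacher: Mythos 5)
Your proposal is correct and follows essentially the same route as the paper: reduce to the nontrivial-bundle case, transfer to the unitary frame via the functions $\sigma_j$, invoke Ueda's Lemma $4$ for the unitary flat bundle $L_s^m$ with the invariant distance on ${\rm Pic}^0$ identified (up to equivalence) with the explicit lattice distance, and then absorb the frame-change distortion using the uniform bounds $|\sigma_j|^{\pm m}\le M_1$, ending with $K=K_0M_1^2$. The only cosmetic difference is that the paper writes the frame change through $\widehat{\beta}_{j,s}\sigma_j^m=\beta_{j,s}$ and bounds $\|\beta\|\le M_1\|\widehat{\beta}\|$, whereas you bound $\|\widehat{\beta}\|\ge M_1^{-1}\|\beta\|$ — algebraically identical.
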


\begin{proof}
One may assume that $L_s^m$ is not holomorphically trivial, since otherwise both $mp(s)$ and $mq(s)$ are integers by {\bf Assumption} $3$ $(ii)$. 
Note that $H^0(\{U_{j, s}\}, \mathcal{O}_{Y_s}(L_s^m))=H^1(\{U_{j, s}\}, \mathcal{O}_{Y_s}(L_s^m))=0$ in this case, which meas that $\beta$ as in the assertion is uniquely exists for any $\alpha$. 

Take a $1$-cocycle $\alpha=\{(U_{jk, s}, \alpha_{jk, s}e_j^m)\}\in \check{Z}^1(\{U_{j, s}\}, \mathcal{O}_{Y_s}(L_s^m))$ such that $[\alpha]=0\in \check{H}^1(\{U_{j, s}\}, \mathcal{O}_{Y_s}(L_s^m))$. 
By letting $\widehat{\alpha}_{jk, s}:=\sigma_j^{-m}\cdot \alpha_{jk, s}$, 
one have that 
$\widehat{\alpha}_{jk, s}\widehat{e}_j^m=\alpha_{jk, s}e_j^m$. 
Take a primitive 
$\beta= \{(U_{j, s}, \beta_{j, s}e_j^m)\}=\{(U_{j, s}, \widehat{\beta}_{j, s}\widehat{e}_j^m)\}$, which uniquely exists as we mentioned above. 
Note that $\widehat{\beta}_{j, s}\cdot \sigma_j^m=\beta_{j, s}$ for each $j$. 
It follows from 
\cite[Lemma 4]{U} that there exists a constant $K_1$ which depends only on $Y$ and $\{U_j\}$ and on neither $s$, $m$, $\alpha$, nor $\beta$ such that 
\[
\max_j\sup_{U_{j, s}}|\widehat{\beta}_{j, s}| \leq \frac{K_1}{d_{\Sigma}(\mathbb{I}_s, L_s^m)}\cdot\max_{j, k}\sup_{U_{jk, s}}|\widehat{\alpha}_{jk, s}|, 
\]
where $d_{\Sigma}$ is an invariant distance on $\Sigma$. 
As all the invariant distances on $\Sigma$ are equivalent to each other, one can use our distance $d$ on $Y$ as $d_\Sigma$ via the identification. 
As 
\[
\max_{j, k}\sup_{U_{jk, s}}|\widehat{\alpha}_{jk, s}|
=\max_{j, k}\sup_{U_{jk, s}}\left(|\alpha_{jk, s}|\cdot |\sigma_j|^{-m}\right)
\leq M_1\cdot \max_{j, k}\sup_{U_{jk, s}}|\alpha_{jk, s}|
\]
and 
\[
\max_{j}\sup_{U_{j, s}}|\beta_{j, s}|
=\max_{j}\sup_{U_{j, s}}\left(|\widehat{\beta}_{j, s}|\cdot |\sigma_j|^m\right)
\leq M_1\cdot \max_{j}\sup_{U_{j, s}}|\widehat{\beta}_{j, s}|
\]
hold, one have the lemma by letting $K:=K_1\cdot M_1^2$.  
\end{proof}

\subsubsection{Assumption $5$}

For any non-torsion element 
$s\in \Sigma$, one have that the pair $(Y_s, X_s)$ is of infinite type, since it follows from {\rm Assumption} $1$ that $H^1(Y_s, \mathcal{O}_{Y_s}(N_s\otimes S^{m+1}N_s^*))=0$ for any $m\geq 1$. 
When $s\in\mathrm{U}(1)=\ell_0$ is a torsion element, 
consider the foliation $\mathcal{F}_s$ on $X_s$ defined by $b_s$ as in Assumption $5$'. 
Then, by a simple observation, it follows that the holonomy of $\mathcal{F}_s$ along a leaf $Y_s$ is a diagonal matrix ${\rm diag}(t_1, t_2, \dots, t_r)$ for $m_\lambda$-th roots of the unity $t_\lambda$'s. 
This means that, for a suitable foliation chart 
$(z_j, \widehat{w}_j^{(1)}, \widehat{w}_j^{(2)}, \dots, \widehat{w}_j^{(r)})$ on a neighborhood of each $U_{j, s}$, the transition functions of $(\widehat{w}_j^{(1)}, \widehat{w}_j^{(2)}, \dots, \widehat{w}_j^{(r)})$ are unitary, which shows the assertion of {\rm Assumption} $5$. 

\subsection{A configuration in which $Y_s$'s are cycles of rational curves}\label{section:setting_nodal}

Let $Z$ be a smooth complex surface and $Y^0\subset Z$ be a holomorphically embedded cycle of $N$ rational curves (i.e. $Y^0$ is a reduced subvariety of $X$ with only nodal singularities whose normalization consists of $N$ copies of $\mathbb{P}^1$ such taht the dual graph is a cycle graph). 
Denote by $i\colon C\to Y^0$  the normalization, 
and by $C_1, C_2, \dots, C_N$ the irreducible components of $C$. 
We may assume that $i(C_\nu)\cap i(C_\mu)\not=\emptyset$ if $(\nu, \mu)=(1, 2), (2, 3), \dots, (N-1, N)$ or $(N, 1)$. 
Let $p_0$ be the intersection $i(C_N)\cap i(C_1)$, and 
$p_\nu$ be the intersection $i(C_\nu)\cap i(C_{\nu+1})$ for $\nu=1, 2, \dots, N-1$ when $N>1$. 
In this case, we often identify each $C_\nu$ with the image $i(C_\nu)$, and regard each $p_\nu$ also as a point of $C_{\nu}$ or $C_{\nu-1}$ for $\nu=0, 1, \dots, N-1$ ($C_0:=C_N$). 
When $N=1$, we denote by $\{p_0, p_1\}$ the preimage of the unique nodal point of $Y^0$ by $i$. 
We sometimes denotes the unique nodal point also by $p_0$ in this case. 

Denote by $L$ the normal bundle $N_{Y^0/Z}=[Y^0]|_{Y^0}$. 
Assume that the intersection number $(i^*L. C_\nu)$ is equal to $1$ for $\nu=1$ and to $0$ for $\nu=2, 3, \dots, N$. 

Set $\Sigma:=C_1\setminus \{p_0, p_1\}$. 
In this subsection, we identify $\Sigma$ with $\mathbb{C}^*$ by using the non-homogeneous coordinate $s$ of $C_1$ such that $s$ maps $p_0$ to $0\in\mathbb{P}^1$, $p_1$ to $\infty\in \mathbb{P}^1$, 
and the unique zero of a non section $f_L$ of $H^0(Y^0, \mathcal{O}_{Y^0}(L))$ to $1\in \mathbb{P}^1$. 
Note that it follows by a standard argument that $H^0(Y^0, \mathcal{O}_{Y^0}(L))\cong\mathbb{C}, H^1(Y^0, \mathcal{O}_{Y^0}(L))=0$, 
and that, for an element $f\in H^0(Y^0, \mathcal{O}_{Y^0}(L))$, the following three conditions are equivalent to each other: $i^*f(p_0)=0$, $i^*f(p_1)=0$, and $f\equiv 0$. 

We use a finite open covering $\{U_j\}$ of $Y^0$ such that $\bigcup_{U_j\cap Y_{\rm sing}=\emptyset}U_j=Y\setminus Y_{\rm sing}$ and that, for each $p_\nu \in Y^0_{\rm sing}$, it holds that $\#\{k\mid p_\nu\in U_k\}=1$. 
We denote by $U_{(N1)}$ the unique element of $\{U_j\}$ which includes $p_0$, 
by $U_{(\nu, \nu+1)}$ the unique element of $\{U_j\}$ which includes $p_\nu$ for each $\nu=1, 2, \dots, N-1$, 
and by $U_{(N1)}^\pm$ the irreducible components of $U_{(N1)}$. 
We may assume that $U_{(N1)}^+\cap \{s\in \Sigma\mid |s|<1\}\not=\emptyset$. 
We also assume that $U_j\cap Y^0_{\rm sing}=\emptyset$ holds if $U_k\cap Y^0_{\rm sing}\not=\emptyset$ and $U_j\cap U_k\not=\emptyset$. 

According to the proof of \cite[Lemma 2.1]{K6} and \cite[Remark 2.2]{K6}, the map 
\[
\Sigma\ni s\mapsto \mathcal{O}_{Y^0}(L)\otimes\mathcal{O}_{Y^0}(-s)\in{\rm Pic}^0(Y^0)
\]
is an isomorphism, by which we will identify $\Sigma$ with ${\rm Pic}^0(Y^0)$ in this subsection. 
Let us see how are the transition functions of $\mathcal{O}_{Y^0}(L)\otimes\mathcal{O}_{Y^0}(-s_0)$ for $s_0\in \Sigma$. 
Denote by $F_{s_0}\in H^0(C, \mathcal{O}_{C}(i^*L)\otimes \mathcal{O}_C(\{1\}-\{s_0\}))$ the section obtained by tensoring $i^*f_L$ and the element of $H^0(C, \mathcal{O}_C(\{1\}-\{s_0\}))$ which coincides with the rational map $s\mapsto \frac{s_0-s}{1-s}$ on $C_1$. 
Then it holds that $F_{s_0}(p_0)=s_0\cdot F_{s_0}(p_1)$. 
Denote by $G\in H^0(U_{(N1)}, \mathcal{O}_{Y^0}(L)\otimes \mathcal{O}_{Y^0}(-i(s)))$ the element defined by 
\[
G:=\begin{cases}
F_{s_0} & \text{on}\ U_{(N1)}^+ \\
s_0\cdot F_{s_0}  & \text{on}\ U_{(N1)}^-
\end{cases}.
\]
By using $G$ as a local frame on $U_{(N1)}$ and a section $e_j$ such that $i^*e_j=F_{s_0}|_{U_j}$ as that on $U_j$ for $j\not=(N1)$, one have that there exists a holomorphic function $t_{jk}$ on $\Sigma$ for each $j$ and $k$ such that the element of ${\rm Pic}^0(Y^0)$ which corresponds to $s_0\in \Sigma$ coincides with $[\{(U_{jk}, (t_{jk}(s_0))^{-1})\}]$. 
Note that $t_{jk}(s)$ is either $s^{\pm 1}$ or $1$. 

Let $\overline{\mathcal{X}}$ be the blow-up of $Z\times \Sigma$ along the subvariety 
\[
\{(z, s)\in Y^0\times \Sigma\mid z\ \text{coincides with}\ s\ \text{via the identification above}\}
\subset Z\times \Sigma, 
\]
and $\pi\colon \overline{\mathcal{X}}\to \Sigma$ be the morphism obtained by composing the blow-up morphism and the second projection $Z\times \Sigma\to \Sigma$. 
Note that $\pi$ is a surjective holomorphic submersion, 
and that the fiber $X_s:=\pi^{-1}(s)$ is the blow-up of $Z$ at $s\in Y^0\subset Z$ for each $s\in \Sigma$. 
Denote by $\overline{\mathcal{Y}}$ the strict transform of  $Y^0\times \Sigma$, 
by $Y_s$ the intersection $X_s\cap \overline{\mathcal{Y}}$, 
and by $N_s$ the the line bundle $\mathcal{O}_{Y_s}(L)\otimes \mathcal{O}_{Y_s}(-s)$ on $Y_s$ for each $s\in \Sigma$, where we are regarding $L$ as a line bundle on $Y_s$ by the natural identification of $Y_s$ with $Y^0$. 
Note that $N_{Y_s/X_s}\cong N_s$. 

In what follows, we identify $\Sigma$ with $\mathbb{C}^*$ by using the coordinate $s$, 
and will use the distance $d$ of $\Sigma$ which is the restriction of that of $\mathbb{C}^*$ induced by the Euclidean distance of the universal covering 
$\mathbb{C}\ni \xi\mapsto \exp(2\pi\sqrt{-1}\xi)\in \Sigma$. 
Denote by $S$ the subset defined by $S:=\{s\in \Sigma\mid d(\mathrm{U}(1), s)<2\} (=\{\exp(2\pi\sqrt{-1}\xi)\in \Sigma\mid |{\rm Im}\,\xi|<2\})$. 
We define the total space $\mathcal{X}$ by $\mathcal{X}:=\pi^{-1}(S)\subset \overline{\mathcal{X}}$, 
and the subvariety $\mathcal{Y}$ by $\mathcal{Y}:=\overline{\mathcal{Y}}\cap \mathcal{X}$. 
Denote the restriction $\pi|_{\mathcal{X}}$ simply by the same letter $\pi$. 
Let $\{U_j\}, \{\mathcal{U}_j\}$, and $\{U_{j, s}\}$ be those as in \S \ref{section:general}. 
In the rest of this subsection, we will show that $\pi\colon\mathcal{X}\to S$ satisfies {\bf Assumption} $1, 2, 3, 4$, and $5$ under the following: 

\begin{description}
\item[Assumption $5$''] Assume that an element $s\in \Sigma$ is torsion (i.e. $s^m=1$ for some positive integer $m$). 
Then there exists a neighborhood $V_s$ of $Y_s$ in $X_s$ 
and a surjective proper holomorphic map $b_s\colon V_s\to B_s$ onto a neighborhood $B_s$ of the origin of $\mathbb{C}$ 
such that $b_s^*\{0\}=m\cdot Y_s$ holds as divisors for some positive constant $m$. 
\qed
\end{description}

\subsubsection{Assumption $1$}

One can easily show that the assertion of Assumption $1$ holds by the calculation as in the proof of \cite[Proposition 2.5]{K6}. 

\subsubsection{Assumption $2$}

First we construct $\mathcal{V}_k$ for $k=(N1), (12), \dots, (N, N-1)$. 
Denote by $\overline{\mathcal{U}}_k\subset \overline{\mathcal{X}}$ the strict transform of $U_k\times \Sigma$. 
Let $\overline{\mathcal{U}}_k^\pm$ be each of the irreducible component of $\overline{\mathcal{U}}_k$. 
As each $\overline{\mathcal{U}}_k^+$ is stein, it follows from \cite{Siu} that there exists a Stein neighborhood $\mathcal{W}_k$ of $\overline{\mathcal{U}}_k^+$ in $\overline{\mathcal{X}}$. 
Take a defining function $y_k$ of $\overline{\mathcal{U}}_k^+$ on $\mathcal{W}_k$, 
and $x_k$ of $\overline{\mathcal{U}}_k^-\cap \mathcal{W}_k$ on $\mathcal{W}_k$. 
Set $w_k:=x_k\cdot y_k$. 
Then, as $S\Subset \Sigma$, the assertion $(ii), (iv)$, and $(vi)$ of {\bf Assumption $2$} holds for each $k=(N1), (12), \dots, (N, N-1)$ by shrinking $U_k$'s and $\mathcal{W}_k$'s and by letting $\mathcal{V}_k:=\mathcal{W}_k\cap \mathcal{X}$. 
Note that $\{U_j\}$ is still an open covering of $Y^0$ after shrinking $U_k$'s in such manner, since it holds that $\bigcup_{U_j\cap Y_{\rm sing}=\emptyset}U_j=Y\setminus Y_{\rm sing}$. 

For $j$ such that $U_j\cap Y_{\rm sing}=\emptyset$, we define $\mathcal{V}_j$ by using \cite[Corollary 1]{Siu} in the same manner as in \S \ref{section:setting_smooth}. 
Then, by the same argument as in , one have that the assertion $(i), (ii), (iii)$, and $(v)$ of {\bf Assumption $2$} also holds by letting $t_{jk}^{(1)}:=t_{jk}$ (Note that $N_{\mathcal{Y}/\mathcal{X}}^{-1}=[\{\mathcal{U}_{jk}, t_{jk}(s)\}]$). 

\subsubsection{Assumption $3$}

Here we show the assertion {\bf Assumption $3$} holds by letting $M_0:=1$. 
First let us note that the assertion $(i)$ clearly holds. 

It follows from a calculation by using a long exact sequence as in \cite[Proposition 2.5]{K6} that, for $s\in S$, 
$H^0(Y_s, \mathcal{O}_{Y_s}(N_s))=0$ holds if and only if $s=1$. 
Thus one have that the set 
$\{s\in S\mid N_s^m\cong \mathbb{I}_s \}$ coincides with $\{s\in {\rm U}(1)\mid s^m=1\}$, which proves the assertion $(ii)$. 

Denote by $p(s)$ and $q(s)$ the real numbers such that $\exp(2\pi\sqrt{-1}(p(s)+q(s)\sqrt{-1})) = s$ for each $s\in S$. 
Note that $p(s)$ is determined modulo $\mathbb{Z}$. 
Then, as
$s\in \widehat{S}_m$ if and only if $|q(s)|<1/m$, 
one have that 
$\max_{j, k}\sup_{s\in \widehat{S}_m}|t_{jk}(s)^m|
\leq \exp(2\pi/m)^m=\exp(2\pi)$, since $t_{jk}(s)$ is either $1$ or $s^{\pm 1}$. 
Thus the assertion $(iii)$ holds by letting $\Theta:=\exp(2\pi)$.

\subsubsection{Assumption $4$}

Take a local frame $e_j$ of $N_{\mathcal{Y}/\mathcal{X}}^{-1}$ on each $\mathcal{U}_j$ such that $e_j=t_{jk}e_k$ holds on each $\mathcal{U}_{jk}$ (or just let $e_j:=dw_j|_{\mathcal{U}_j}$). 
It is sufficient to show the following: 

\begin{lemma}\label{lem:ueda_4_new_nodal}
There exists a constant $K$ such that the following holds for any positive integer $m$ and any element $s\in \widehat{S}_m$: 
for a $1$-cochain $\alpha=\{(U_{jk, s}, \alpha_{jk, s}e_j^m)\}\in \check{Z}^1(\{U_{jk, s}\}, \mathcal{O}_{Y_s}(N_s^{-m}))$ and 
a $0$-cochain $\beta=\{(U_{j, s}, \beta_{j, s}e_j^m)\}\in \check{C}^0(\{U_{j, s}\}, \mathcal{O}_{Y_s}(N_s^{-m}))$ with 
$\alpha=\delta \beta:=\{(U_{jk, s}, (-\beta_{j, s}+t_{kj}^m(s)\cdot \beta_{k, s})e_j^m)\}$, 
it holds that
\[
m\cdot \min_{a\in\frac{1}{m}\mathbb{Z}}|(p(s)+q(s)\sqrt{-1})-a|\cdot \|\beta\|_{s, m}\leq K\cdot \|\alpha\|_{s, m}, 
\]
where 
$\|\alpha\|_{s, m}:=\max_{j, k}\sup_{U_{jk, s}}|\alpha_{jk, s}|$ 
and 
$\|\beta\|_{s, m}:=\max_{j}\sup_{U_{j, s}}|\beta_{j, s}|$. 
\qed
\end{lemma}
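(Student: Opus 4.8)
The plan is to recognise this as the cycle-of-rational-curves analogue of Ueda's \cite[Lemma 4]{U} and to reduce it to a uniform estimate for a single holomorphic family of Fredholm operators indexed by the monodromy $\tau=s^m$.

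First, one may assume $N_s^{-m}$ is not holomorphically trivial, since otherwise $s^m=1$ by {\bf Assumption $3$} $(ii)$, the left-hand side vanishes, and there is nothing to prove. Writing $s=\exp(2\pi\sqrt{-1}(p(s)+q(s)\sqrt{-1}))$ one has $m\cdot\min_{a\in\frac1m\mathbb{Z}}|(p(s)+q(s)\sqrt{-1})-a|=d(1,s^m)$, so the claim reads $d(1,s^m)\cdot\|\beta\|_{s,m}\le K\cdot\|\alpha\|_{s,m}$. Next, identifying $Y_s\cong Y^0$ so that $\{U_{j,s}\}$ becomes the fixed covering $\{U_j\}$, the transition functions of $N_s^{-m}$ with respect to $e_j^m$ are $t_{jk}(s)^m=s^{m\epsilon_{jk}}$, where $\epsilon_{jk}\in\{0,\pm1\}$ is the integer with $t_{jk}(s)=s^{\epsilon_{jk}}$. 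Hence, up to an overall rescaling of all frames which does not affect the ratio $\|\beta\|_{s,m}/\|\alpha\|_{s,m}$, the operator $\delta$ coincides with a fixed coboundary map $\Delta_\tau\colon \check{C}^0(\{U_j\},\mathcal{O}_{Y^0}(L_\tau))\to \check{Z}^1(\{U_j\},\mathcal{O}_{Y^0}(L_\tau))$, $\Delta_\tau(\{\beta_j\})=\{(U_{jk},-\beta_j+\tau^{-\epsilon_{jk}}\beta_k)\}$, which depends only on $\tau:=s^m$; here $L_\tau$ is the $\mathbb{C}^*$-flat line bundle on $Y^0$ with those transition functions. Since $s\in\widehat{S}_m$ forces $|\tau|=|s|^m\in[e^{-2\pi},e^{2\pi}]$ (recall $M_0=1$ here), it suffices to prove $\|\Delta_\tau^{-1}\|_{\mathrm{op}}\le K'\,|1-\tau|^{-1}$ for all $\tau$ in that fixed compact annulus, together with the elementary fact that $d(1,\tau)/|1-\tau|$ extends continuously across $\tau=1$ and is therefore bounded on the annulus.

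For the operator bound: the $U_j$ and $U_{jk}$ are Stein and $L_\tau$ is trivial on each, so the covering is acyclic and $\ker\Delta_\tau=H^0(Y^0,\mathcal{O}(L_\tau))$, $\mathrm{coker}\,\Delta_\tau=H^1(Y^0,\mathcal{O}(L_\tau))$; by the long exact sequence computation in \cite[Proposition 2.5]{K6} (the arithmetic genus of $Y^0$ equals $1$) both vanish for $\tau\ne1$ and are one-dimensional for $\tau=1$. Thus $\{\Delta_\tau\}$ is a holomorphic family of index-$0$ Fredholm operators, invertible for $\tau\ne1$; away from a disk around $\tau=1$ the annulus is compact, so $\sup\|(\tau-1)\Delta_\tau^{-1}\|_{\mathrm{op}}<\infty$ there. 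Near $\tau=1$ I would decompose $\check{C}^0=\mathbb{C}v\oplus E_0$ and $\check{Z}^1=\mathbb{C}v'\oplus F_0$, where $v$ is the $0$-cochain with all components $\equiv1$ (which spans $\ker\Delta_1$), $F_0=\mathrm{range}\,\Delta_1$, and $v':=\tfrac{d}{d\tau}\big|_{\tau=1}(\Delta_\tau v)$; a direct computation gives $v'=\{(U_{jk},-\epsilon_{jk})\}$. This is an integral $1$-cocycle, and it is \emph{not} a coboundary in $\check{C}^1(\{U_j\},\mathcal{O}_{Y^0})$: if it were, then $s^{\epsilon_{jk}}=s^{g_k-g_j}$ for holomorphic $g_j$ on $U_j$, forcing $N_s^{-1}=[\{(U_{jk},s^{\epsilon_{jk}})\}]$ to be holomorphically trivial for every $s$, contradicting that $N_s^{-1}$ is non-torsion for generic $s\in S$. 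Hence $v'$ has nonzero image in $\mathrm{coker}\,\Delta_1$, so it spans a complement of $F_0$; writing $\Delta_\tau$ in block form with respect to these decompositions one finds $A(1)=B(1)=C(1)=0$, $D(1)$ an isomorphism, and $A'(1)v=v'$, whence the Schur complement $S(\tau)=A(\tau)-B(\tau)D(\tau)^{-1}C(\tau)$ equals $(\tau-1)$ times a holomorphic scalar function that is nonzero at $\tau=1$. Therefore $(\tau-1)\Delta_\tau^{-1}$ extends holomorphically across $\tau=1$; combining the two regimes gives $\|(\tau-1)\Delta_\tau^{-1}\|_{\mathrm{op}}\le M$ uniformly on the annulus, and taking $K$ a suitable multiple of $M$ completes the proof.

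The main obstacle is precisely the degeneration at $\tau=1$: everything hinges on $\Delta_\tau^{-1}$ having only a \emph{simple} pole there, i.e. on the lowest obstruction $v'$ being nonzero in the one-dimensional cokernel of $\Delta_1$. This is where the hypothesis that $Y^0$ is a cycle of rational curves — so that its dual graph is a single cycle and $\mathrm{Pic}^0(Y^0)\cong\mathbb{C}^*$ is one-dimensional — is essential; with several independent loops carrying the same monodromy the pole order, hence the exponent in the estimate, would be worse (this is the mechanism behind the pathological deformation of \cite[Example 4.3]{K7}). I would therefore spend most of the write-up on this $\tau=1$ analysis; the reduction steps and the compactness argument away from $\tau=1$ are routine. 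A more computational alternative, parallel to the explicit solution in Lemma \ref{lem:ueda_lem_analogue_toymodel}, would be to solve $\Delta_\tau\beta=\alpha$ by propagating $\beta$ along a spanning tree of the nerve and closing up the single loop, which produces the factor $(1-\tau)^{-1}$ by hand; this makes the simple pole manifest but requires more bookkeeping with the $\mathbb{C}^*$-shaped pieces $C_\nu\setminus\{p_{\nu-1},p_\nu\}$ of $Y^0$.
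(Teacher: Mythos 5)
Your approach is genuinely different from the paper's proof. You treat $\delta$ as a holomorphic family of operators $\Delta_\tau$, $\tau=s^m$, and try to read the $|1-\tau|^{-1}$ growth off a simple pole of $\Delta_\tau^{-1}$ at $\tau=1$ via a block/Schur decomposition. The paper instead normalizes $Y^0$ at the single node $p_0$ to obtain a tree $\widetilde{C}$ (on which $J^*N_s^{-m}$ is trivial), applies the Kodaira--Spencer type estimate \cite[Lemma 4.1]{K6} there to get a primitive $\widehat{\beta}'$ with $\max_j\sup_{\widetilde U_j}|\widehat\beta'_j|\le K_0\|\alpha\|_{s,m}$ uniformly in $s$ and $m$, and then closes the loop with the one constant $\ell$ solving $a_++\ell=s^{-m}(a_-+\ell)$, i.e.\ $\ell=(s^ma_+-a_-)/(1-s^m)$; the pole appears explicitly there. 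Your ``computational alternative'' in the last paragraph (propagate $\beta$ along a spanning tree of the nerve, close the single loop) is essentially this argument, and is the one that works cleanly.

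The Fredholm route, as written, has real gaps. First, the framework is not available on the naive sup-norm cochain spaces over the fixed cover $\{U_j\}$: even when $H^1=0$, a bounded cocycle need not have a bounded primitive on the \emph{same} cover; the Kodaira--Spencer lemma produces one only after shrinking, which is why \cite[Lemma 3]{U}, \cite[Lemma 2]{KS}, Lemma \ref{lem:KS_lem_analogue_toymodel}, and Lemma \ref{lem:KS_2} all involve a relatively compact subcover or a very particular cover. In particular, the assertion that $\Delta_1$ has closed range (so that $F_0=\mathrm{range}\,\Delta_1$ is a closed complemented subspace and the block form of $\Delta_\tau$ makes sense, and that the Banach-space cokernel is really $H^1(Y^0,\mathcal{O}_{Y^0})$) is not free — it is essentially the uniform estimate you are trying to prove, so the Schur-complement step is circular without a separate KS-type input at $\tau=1$. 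Second, the bound $\sup\|(\tau-1)\Delta_\tau^{-1}\|<\infty$ away from $\tau=1$ is not ``routine by compactness'': a Montel extraction gives only locally uniform convergence on the $U_j$, not sup-norm convergence, so lower semicontinuity of $\inf\{\|\Delta_\tau\beta\| : \|\beta\|=1\}$ does not close without the same shrinking device. Your local analysis at $\tau=1$ — that $v'=\{-\epsilon_{jk}\}$ represents a nonzero class in the one-dimensional $H^1(Y^0,\mathcal{O}_{Y^0})$, hence the pole is simple, and that this is exactly where the ``single loop'' hypothesis on the dual graph enters — is correct and is the right conceptual picture, but it does not yield a uniform constant over the whole annulus on its own. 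I would switch to the normalization-at-$p_0$ route, which is self-contained once \cite[Lemma 4.1]{K6} is granted.
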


\begin{proof}
It is sufficient to show the lemma only when $s^m\not=1$. 

Let $J\colon \widetilde{C}\to Y^0$ be the morphism obtained by considering the normalization only at the singular point $p_0$. 
Note that $\widetilde{C}$ is a connected variety whose dual graph is tree, and that $J^{-1}(p_0)$ consists of two points, say $p_0^\pm$. 
Denote by $\widetilde{U}_j$ the preimage $J^{-1}(U_j)$ for each $j\not=(N1)$, 
and by $\widetilde{U}_{(N1\pm)}$ the neighborhood of $p_0^\pm$, respectively, 
such that $J^{-1}(U_{(N1)})=\widetilde{U}_{(N1-)}\cup \widetilde{U}_{(N1+)}$ holds. 
We regard 
$\{U_j\}_{j\not=(N1)}\cup\{\widetilde{U}_{(N1+)}, \widetilde{U}_{(N1-)}\}$ as an open covering of $\widetilde{C}$. 

Define a $0$-cochain $\widehat{\beta}=\{(\widetilde{U}_j, \widehat{\beta}_j)\}\in
\check{C}^0(\{\widetilde{U}_j\}, \mathcal{O}_{\widetilde{C}}(J^*N_s^{-m}))$ by 
$\widehat{\beta}_{(N1+)}:=J^*\beta_{(N1)}$, 
$\widehat{\beta}_{(N1-)}:=J^*\beta_{(N1)}\cdot s^m$, and 
by $\widehat{\beta}_j:=J^*\beta_j$ for the other $j$'s, where we are regarding $N_s$ as a line bundle on $Y^0$ by using the natural identification of $Y^0$ and $Y_s$. 
As all the transition functions of $J^*N_s$ are trivial, one have that 
$\max_{j, k}\sup_{\widetilde{U}_{jk}}|\widehat{\alpha}_{jk}|=\|\alpha\|_{s, m}$ 
holds, where we are denoting $\delta\widehat{\beta}_j (=J^*\alpha)$ by $\widehat{\alpha}=\{(\widetilde{U}_{jk}, \widehat{\alpha}_{jk})\}$. 
Again by the fact that all the transition functions of $J^*N_s$ are trivial, 
it follows from \cite[Lemma 4.1]{K6} that 
there exists an element $\widehat{\beta}'=\{(\widetilde{U}_j, \widehat{\beta}_j')\}\in
\check{C}^0(\{\widetilde{U}_j\}, \mathcal{O}_{\widetilde{C}}(J^*N_s^{-m}))$ such that 
$\delta\widehat{\beta}'=\widehat{\alpha}$ and 
$\max_{j}\sup_{\widetilde{U}_{j}}|\widehat{\beta}_{j}'|\leq K_0\|\alpha\|_{s, m}$ 
holds for some constant $K_0$ which depends only on $Y^0$ and $\{U_j\}$ and on neither $s$ nor $m$. 
Note that, as is clearly followed by the compactness of $Y^0$, one can take a constant $\ell\in\mathbb{C}$ such that $\widehat{\beta}_{j}=\widehat{\beta}_{j}'+\ell$. 
As it holds for $a_\pm := \widehat{\beta}'_{(N1\pm)}(p_0^\pm)$ that $a_++\ell=s^{-m}(a_-+\ell)$, one have that 
$\ell=\frac{s^ma_+-a_-}{1-s^m}$. 
Therefore, by 
\[
\|\beta\|_s\leq \max\{1, |s|^m\}\cdot \left(\max_{j}\sup_{\widetilde{U}_{j}}|\widehat{\beta}'_{j}|+\frac{|s^ma_+-a_-|}{|1-s^m|}\right)
\leq e^{2\pi}\left(K_0\|\alpha\|_{s, m}+\frac{2e^{2\pi}K_0\|\alpha\|_{s, m}}{|1-s^m|}\right), 
\]
one have the inequality $|1-s^m|\cdot \|\beta\|_{s, m}\leq K_1\cdot \|\alpha\|_{s, m}$ holds for a constant 
$K_1:=e^{2\pi}K_0\cdot (1+e^{2\pi}+2e^{2\pi})$. 
The lemma follows form this, since $d$ and the distance of $S$ induced by restricting the Euclidean distance of $\mathbb{C} (\supset \mathbb{C}^*\supset S)$ are equivalent. 
\end{proof}

\subsubsection{Assumption $5$}

Take $s\in \Sigma$. 
When $s$ is non-torsion, it follows from {\bf Assumption $1$} that the pair $(Y_s, X_s)$ is of infinite type, since $H^1(Y_s, \mathcal{O}_{Y_s}(N_s^{-m}))=0$ holds for any $m\geq 1$. 
When $s$ is non-torsion, consider the fibration $b_s\colon V_s\to B_s$ as in 
Assumption $5$''. 
Then one can construct a system of local defining functions $\{(V_{j, s}\cap V_s, w_{j, s})\}$ of $Y_s$ by considering $m$-th root of $b_s^*w|_{V_{j, s}\cap V_s}$, where $w$ is the coordinate of $B_s$. 
As clearly the ratio $w_{j, s}/w_{k, s}$ is a constant function whose value is a $m$-th root of the unity for each $j$ and $k$, one have that the assertion {\bf Assumption $5$} holds. 


\section{Proof of Theorem \ref{thm:main_general}}\label{section:prf_main_thm}

\subsection{Outline of the proof}\label{section:proof_outline}

In this section, for proving Theorem \ref{thm:main_general}, 
we will try to construct a new system of defining functions 
\[
\widehat{w}_{j}=\left(
    \begin{array}{c}
      \widehat{w}_{j}^{(1)} \\
      \widehat{w}_{j}^{(2)} \\
      \vdots \\
      \widehat{w}_{j}^{(r)}
    \end{array}
  \right)
\] 
by modifying $w_j$'s as in {\bf Assumption $2$}. 
Modification is done in the following manner: 
construct a suitable holomorphic function $F^{(\lambda)}_{j, a}$ for each $\lambda=1, 2, \dots, r$ and for each multi-index $a\in (\mathbb{Z}_{\geq 0})^r$ with $|a|\geq 2$, and solve the functional equation 
\begin{equation}\label{eq:def_w_hat}
w_j^{(\lambda)}=\widehat{w}_j^{(\lambda)}+\sum_{|a|\geq 2}F^{(\lambda)}_{j, a}\cdot \widehat{w}_j^a, 
\end{equation}
where 
$\widehat{w}_j^a:=\prod_{\lambda=1}^r(\widehat{w}_j^{(\lambda)})^{a_\nu}$. 

We will construct the function $F^{(\lambda)}_{j, a}$ suitably as a function defined on $p^{-1}(Y\times S_{|a|-1})$ inductively on $|a|$, and extend it to a holomorphic function on a neighborhood of $p^{-1}(Y\times S_{|a|-1})$ in the manner we will explain in Remark \ref{rmk:rule_extension} below. 

\begin{remark}\label{rmk:rule_extension}
Here we explain our rule in this section how to extend a function $F=F^{(\lambda)}_{j, a}$ defined on $S_{|a|-1}$ to its neighborhood. 
For simplicity, we will explain on each $U_{j, s}$ ($s\in S_{|a|-1}$). 
Let $F$ be a holomorphic function defined on $U_{j, s}$. 
When $U_j$ is non-singular, we use the pull-back $({\rm Pr}_{\mathcal{U}_j}|_{V_{j, s}})^*F=F\circ {\rm Pr}_{\mathcal{U}_j}|_{V_{j, s}}$, which will be denoted by same letter $F$. 
For singular $U_k$, denote by $p_k$ its singular point, and 
by $U_{k, s}^+$ and $U_{k, s}^-$ the irreducible components $\{(x_k, y_k)\in V_{j, s}\mid y_k=0\}$ and $\{(x_k, y_k)\in V_{j, s}\mid x_k=0\}$, respectively, of
$U_{k, s}$. 
Letting $c:=F(p_k)$, it is easily observed that there uniquely exist holomorphic functions 
$F^\pm$ on $U_{k, s}^\pm$ such that $F^\pm(p_k)=0$ and 
\[
F-c=\begin{cases}
F^+ & \text{on}\ U_{k, s}^+\\
F^- & \text{on}\ U_{k, s}^-
\end{cases}
\]
hold. 
In this case, we use the function $\widetilde{F}$ on  $V_{j, s}$ defined by $\widetilde{F}(x_k, y_k):=F^+(x_k)+F^-(y_k)+c$ as an extension of $F$. 
In what follows, we often denote $\widetilde{F}$ simply by the same letter $F$. 
Note that $\sup_{V_{j, s}}|F|\leq 3\sup_{U{j, s}}|F|$ holds in both of the cases. 
\qed
\end{remark}

The functions $\{F^{(\lambda)}_{j, a}\}_{|a|= m}$ will be constructed so that they enjoy the following {\bf (Property)$_m$} inductively when $\{F^{(\lambda)}_{j, a}\}_{|a|< m}$ have already been constructed in the manner that each of them enjoys {\bf (Property)$_{|a|}$}. 

\begin{description}
\item[(Property)$_m$] For any choice of the remaining $\{F^{(\lambda)}_{j, a}\}_{|a|>m}$, the formal solution $\widehat{w}_j$ of the functional equation $(\ref{eq:def_w_hat})$ satisfies 
$T_{jk}\widehat{w}_k =\widehat{w}_j + O(|\widehat{w}_j|^{m+1})$ as elements of $\mathcal{O}_{\mathcal{Y}}(p^{-1}(U_{jk}\times S_{m-1}))[[\widehat{w}_j^{(1)}, \widehat{w}_j^{(2)}, \dots, \widehat{w}_j^{(r)}]]$ for each $j$ and $k$ with $U_{jk}\not=\emptyset$ and $U_j\cap Y_{\rm sing}=\emptyset$. \qed
\end{description}

We will describe how to construct $\{F^{(\lambda)}_{j, a}\}$ in \S \ref{section:constr_Fs}. 
We here remark that we never shrink $\mathcal{V}_j$ and $\mathcal{U}_j$ anymore in \S \ref{section:constr_Fs}. 
After finishing the construction of them, 
we will estimate each $|F^{(\lambda)}_{j, a}|$ suitably on each $U_{j, s}$ with $s\in\mathrm{U}(1)$ in \S \ref{section:estim_Fs} so that one can regard the right hand side of the functional equation $(\ref{eq:def_w_hat})$ as a convergent series in a suitable sense. 

In the rest of this subsection, we will explain how to solve the functional equation $(\ref{eq:def_w_hat})$ after once the construction and the estimate of $F^{(\lambda)}_{j, a}$'s are finished. 
Take $s\in\mathrm{U}(1)$. 
We will construct a solution $\widehat{w}_j$ of the functional equation on each $V_{j, s}$ 
by shrinking $V_{j, s}$ to a smaller neighborhood of $U_{j, s}$ if $U_j\cap Y_{\rm sing}=\emptyset$, and $V_{k, s}$ to a smaller neighborhood of the nodal point if $U_j\cap Y_{\rm sing}\not=\emptyset$. 
Note that $\{V_{j, s}\}$ is an open covering of $Y_s$ even after such shrinking, 
since it holds that $\bigcup_{U_j\cap Y_{\rm sing}=\emptyset}U_j=Y\setminus Y_{\rm sing}$. 

First let us consider on $V_{j, s}$ for $j$ such that 
$Y_{\rm sing}\cap U_j=\emptyset$. 
It follows by {\rm Assumption $2$} $(v)$ that one can embed $V_{j, s}$ into $U_{j, s}\times \mathbb{C}^r$ by regarding $w_j$ as the coordinate of $\mathbb{C}^r$. 
For each $\lambda=1, 2, \dots, r$, 
it will be turned out by the estimate we will make in \S \ref{section:estim_Fs} that 
\[
G(z_j, w_j, \widehat{w}_j):=
-w_j+\widehat{w}_j+\sum_{|a|\geq 2}
\left(
    \begin{array}{c}
    F^{(1)}_{j, a}(z_j, s)\\
    F^{(2)}_{j, a}(z_j, s)\\
      \vdots \\
    F^{(r)}_{j, a}(z_j, s)
    \end{array}
  \right)
\cdot \widehat{w}_j^a. 
\]
can be regarded as a $\mathbb{C}^r$-valued holomorphic function defined on 
a neighborhood of $U_{j, s}\times\{(0, 0)\}$ in $U_{j, s}\times \mathbb{C}^r\times \mathbb{C}^r$. 
As the Jacobian matrix $(\partial G/\partial \widehat{w}_j)$ is invertible on each point of $U_{j, s}\times\{(0, 0)\}$, one have by the implicit function theorem that 
there exists a holomorphic function $\widehat{w}_j(z_j, w_j)$ on a neighborhood of $U_{j, s}$ in $V_{j, s}$ such that $G(z_j, w_j, \widehat{w}_j(z_j, w_j)))\equiv 0$, which mean that $\widehat{w}_j$ is a solution of the functional equation $(\ref{eq:def_w_hat})$. 

Next, let us consider on $V_{k, s}$ for $k$ such that $Y_{\rm sing}\cap U_k\not=\emptyset$. 
According to {\rm Assumption $2$} $(iv)$, one may regard $V_{k, s}$ as a subset of $\mathbb{C}^2$. 
Denoting $w_k^{(1)}$ simply by $w_k$, 
consider 
\[
G(x_k, y_k, \widehat{x}_k):=
-x_k+\widehat{x}_k+\sum_{m=2}^\infty F^{(1)}_{k, m}(x_k, y_k, s)\cdot \widehat{x}_k^my_k^{m-1}. 
\]
By the estimate we will make in \S \ref{section:estim_Fs}, 
it will be turned out $G$ defines a holomorphic function on a neighborhood of the point $(0, 0, 0)$ in $V_{k, s}$. 
As 
$\frac{\partial}{\partial \widehat{x}_k}G(0, 0, 0)=1\not=0$, 
it follows from the implicit function theorem that 
there exists a holomorphic function $\widehat{x}_k=\widehat{x}_k(x_k, y_k)$ defined on a neighborhood of the nodal point in $V_{k, s}$ such that $G(x_k, y_k, \widehat{x}_k(x_k, y_k))\equiv 0$ holds, which means that $\widehat{w}_k:=\widehat{x}_ky_k$ is a solution of the functional equation $(\ref{eq:def_w_hat})$. 

\subsection{Construction of $F^{(\lambda)}_{j, a}$'s}\label{section:constr_Fs}

\subsubsection{Outline of the construction}\label{section:constr_Fs_outline}

Take $w_j$'s as in {\bf Assumption $2$}. 
Let 
\[
t_{jk}w_k^{(\lambda)} = w_j^{(\lambda)}+\sum_{|a|\geq 2} f^{(\lambda)}_{kj, a}(z_j, s)\cdot w_j^a
\]
be the expansion of $t_{jk}w_k^{(\lambda)}$ on $\mathcal{V}_{jk}$ for each $\lambda=1, 2, \dots, r$, where $a=(a_1, a_2, \dots, a_r)$ runs all the elements of $(\mathbb{Z}_{\geq 0})^r$ with $|a|\geq 2$. 
We always assume that $U_j$ is non-singular whenever we consider such kind of expansion (it may possible that $U_k$ is singular). 
We note that each coefficient functions are obtained by 
\[
f^{(\lambda)}_{kj, a}(z_j, s):=\frac{1}{|a|!}\left.\frac{\partial^{|a|}(t_{jk}w_k^{(\lambda)})}{\partial (w_j^{(1)})^{a_1}\partial (w_j^{(2)})^{a_2}\cdots \partial (w_j^{(r)})^{a_r}}\right|_{(w_j, z_j, s)=(0, 0, \dots, 0, z_j, s)}. 
\]
We are also regarding this function as the one defined on $\mathcal{V}_{jk}$ according to the rule we mentioned in Remark \ref{rmk:rule_extension} (i.e. by pulling back by ${\rm Pr}_{\mathcal{U}_j}$). 

In order for all $F^{(\lambda)}_{j, a}$'s to satisfy {\bf (Property)$_m$}, 
the following two formal expansions of $t_{jk}w_k^{(\lambda)}$ around a point $(0, 0, \dots, 0, z_j, s)$ for each $z_j\in U_{jk}$ and each $s\in{\rm U}(1)$ should coincide: 
\begin{eqnarray}
t_{jk}^{(\lambda)}w_k^{(\lambda)} &=&w_j^{(\lambda)}+\sum_{|a|\geq 2} f^{(\lambda)}_{kj, a}(z_j, s)\cdot w_j^a \nonumber \\
&=&\widehat{w}_j^{(\lambda)}+\sum_{|a|\geq 2}F^{(\lambda)}_{j, a}(z_j, s)\cdot \widehat{w}_j^a+\sum_{|a|\geq 2} f^{(\lambda)}_{kj, a}(z_j, s)\cdot \prod_{\mu=1}^r\left(\widehat{w}_j^{(\mu)}+\sum_{|b|\geq 2}F^{(\mu)}_{j, b}(z_j, s)\cdot \widehat{w}_j^b\right)^{a_\mu} \nonumber
\end{eqnarray}
and
\begin{eqnarray}
t_{jk}^{(\lambda)}w_k^{(\lambda)} &=& t_{jk}^{(\lambda)}\widehat{w}_k^{(\lambda)}+\sum_{|a|\geq 2}t_{jk}^{(\lambda)}F^{(\lambda)}_{k, a}(z_k, s)\cdot \widehat{w}_k^a\nonumber \\
&=&t_{jk}^{(\lambda)}\widehat{w}_k^{(\lambda)}+\sum_{|a|\geq 2}t_{jk}^{(\lambda)}F^{(\lambda)}_{k, a}(z_k(w_j, z_j, s), s)\cdot t_{kj}^a\widehat{w}_j^a\nonumber \\
&=&t_{jk}^{(\lambda)}\widehat{w}_k^{(\lambda)}+\sum_{|a|\geq 2}t_{jk}^{(\lambda)}t_{kj}^a\left(F^{(\lambda)}_{k, a}(z_k(0, z_j, s), s)+\sum_{|b|\geq 2}F^{(\lambda)}_{kj, a, b}(z_j, s)\cdot w_j^b\right)\cdot \widehat{w}_j^a\nonumber \\
&=&t_{jk}^{(\lambda)}\widehat{w}_k^{(\lambda)}+\sum_{|a|\geq 2}t_{jk}^{(\lambda)}t_{kj}^aF^{(\lambda)}_{k, a}(z_k(0, z_j, s), s)\cdot \widehat{w}_j^a\nonumber \\
&&+\sum_{|a|\geq 2}t_{jk}^{(\lambda)}t_{kj}^a\sum_{|b|\geq 2}F^{(\lambda)}_{kj, a, b}(z_j, s)\cdot \widehat{w}_j^a\cdot \prod_{\mu=1}^r\left(\widehat{w}_j^{(\mu)}+\sum_{|c|\geq 2}F^{(\mu)}_{j, c}(z_j, s)\cdot \widehat{w}_j^c\right)^{b_\mu} \nonumber 
\end{eqnarray}
when $U_k$ is smooth, 
where $F^{(\lambda)}_{kj, a, b}(z_j, s)$'s are the function defined by the expansion 
\[
F^{(\lambda)}_{k, a}(z_k(w_j, z_j, s), s)
=F^{(\lambda)}_{k, a}(z_k(0, z_j, s), s)+\sum_{|b|\geq 2}F^{(\lambda)}_{kj, a, b}(z_j, s)\cdot w_j^b. 
\]
When $U_k$ is singular, replace the second expansion with 
\begin{eqnarray}
t_{jk}^{(\lambda)}w_k^{(\lambda)} &=& t_{jk}^{(\lambda)}\widehat{w}_k^{(\lambda)}+\sum_{|a|\geq 2}t_{jk}^{(\lambda)}F^{(\lambda)}_{k, a}(x_k, y_k, s)\cdot \widehat{w}_k^a\nonumber \\
&=&t_{jk}^{(\lambda)}\widehat{w}_k^{(\lambda)}+\sum_{|a|\geq 2}t_{jk}^{(\lambda)}F^{(\lambda)}_{k, a}(x_k(w_j, z_j, s), y_k(w_j, z_j, s), s)\cdot t_{kj}^a\widehat{w}_j^a\nonumber \\
&=&t_{jk}^{(\lambda)}\widehat{w}_k^{(\lambda)}+\sum_{|a|\geq 2}t_{jk}^{(\lambda)}t_{kj}^a\left(F^{(\lambda)}_{k, a}(x_k(0, z_j, s), y_k(0, z_j, s), s)+\sum_{|b|\geq 2}F^{(\lambda)}_{kj, a, b}(z_j, s)\cdot w_j^b\right)\cdot \widehat{w}_j^a\nonumber \\
&=&t_{jk}^{(\lambda)}\widehat{w}_k^{(\lambda)}+\sum_{|a|\geq 2}t_{jk}^{(\lambda)}t_{kj}^aF^{(\lambda)}_{k, a}(x_k(0, z_j, s), y_k(0, z_j, s), s)\cdot \widehat{w}_j^a\nonumber \\
&&+\sum_{|a|\geq 2}t_{jk}^{(\lambda)}t_{kj}^a\sum_{|b|\geq 2}F^{(\lambda)}_{kj, a, b}(z_j, s)\cdot \widehat{w}_j^a\cdot \prod_{\mu=1}^r\left(\widehat{w}_j^{(\mu)}+\sum_{|c|\geq 2}F^{(\mu)}_{j, c}(z_j, s)\cdot \widehat{w}_j^c\right)^{b_\mu}, \nonumber 
\end{eqnarray}
where 
$F^{(\lambda)}_{kj, a, b}(z_j, s)$'s are the function defined by the expansion 
\[
F_{k, a}^{(\lambda)}(x_k(w_j, z_j, s), y_k(w_j, z_j, s), s)
=F^{(\lambda)}_{k, a}(x_k(0, z_j, s), y_k(0, z_j, s), s)+\sum_{|b|\geq 2}F^{(\lambda)}_{kj, a, b}(z_j, s)\cdot w_j^b
\]
in this case. 
In the following, we will construct $F^{(\lambda)}_{j, a}$'s according to the observation based on the comparison of these expansions. 

\begin{remark}
On each $X_s$, the idea of the construction of $F^{(\lambda)}_{j, a}$'s is the same one as in the proof of \cite[Theorem 3]{U}. 
Indeed, one can run just the same argument when $s\in {\rm U}(1)$ 
as the one described in \cite{U}, \cite{K6}, and \cite{K8}. 
In the construction of $F^{(\lambda)}_{j, a}$'s, the condition that $s\in {\rm U}(1)$ is important since otherwise the transition functions $t_{jk}(s)$ need not to be elements of ${\rm U}(1)$ and this cause a serious problem when we compare a cohomology class what we will denote by $[\{(U_{jk, s}, h_{kj, a}^{(\lambda)}(-, s))\}]$ in the notation below with Ueda classes. 
This problem is caused by the difficulty on the well-definedness of Ueda classes when $t_{jk}(s)$'s are not unitary (see also Remark \ref{rmk:CLPT}). 
However, in our configuration, it follows by {\bf Assumption} $1$ and $3$ that 
$\check{H}^1(\{U_{j, s}\}, S^{m}N^*_s\otimes N_s)=0$ holds for any $s\in S_{m-1}\setminus\mathrm{U}(1)$, which helps us to overcome this kind of difficulty. 
\end{remark}

\subsubsection{Construction on $S_m^*$}

Denote by $h_{1, kj, a}^{(\lambda)}(z_j, s)$ the coefficient of $\widehat{w}_j^a$ in the expansion 
\[
\sum_{|a|\geq 2} f^{(\lambda)}_{kj, a}(z_j, s)\cdot \prod_{\mu=1}^r\left(\widehat{w}_j^{(\mu)}+\sum_{|b|\geq 2}F^{(\mu)}_{j, b}(z_j, s)\cdot \widehat{w}_j^b\right)^{a_\mu}. 
\]
Then one have that 
\begin{eqnarray}
&&\widehat{w}_j^{(\lambda)}+\sum_{|a|\geq 2}F^{(\lambda)}_{j, a}(z_j, s)\cdot \widehat{w}_j^a+\sum_{|a|\geq 2} f^{(\lambda)}_{kj, a}(z_j, s)\cdot \prod_{\mu=1}^r\left(\widehat{w}_j^{(\mu)}+\sum_{|b|\geq 2}F^{(\mu)}_{j, b}(z_j, s)\cdot \widehat{w}_j^b\right)^{a_\mu} \nonumber \\
&=& \widehat{w}_j^{(\lambda)}+\sum_{|a|\geq 2}\left(F^{(\lambda)}_{j, a}(z_j, s)+h_{1, kj, a}^{(\lambda)}(z_j, s)\right)\cdot \widehat{w}_j^a. \nonumber
\end{eqnarray}
Denote by $h_{2, kj, a'}^{(\lambda)}(z_j, s)$ the coefficient of $\widehat{w}_j^{a'}$ in the expansion 
\[
\sum_{|a|\geq 2}t_{jk}^{(\lambda)}t_{kj}^a\sum_{|b|\geq 1}F^{(\lambda)}_{kj, a, b}(z_j, s)\cdot \widehat{w}_j^a\cdot \prod_{\mu=1}^r\left(\widehat{w}_j^{(\mu)}+\sum_{|c|\geq 2}F^{(\mu)}_{j, c}(z_j, s)\cdot \widehat{w}_j^c\right)^{b_\mu}. 
\]
Note that 
\begin{eqnarray}
&&\widehat{w}_j^{(\lambda)}+\sum_{|a|\geq 2}t_{jk}^{(\lambda)}t_{kj}^aF^{(\lambda)}_{k, a}(z_k(0, z_j, s), s)\cdot \widehat{w}_j^a\nonumber \\
&&+\sum_{|a|\geq 2}t_{jk}^{(\lambda)}t_{kj}^a\sum_{|b|\geq 1}F^{(\lambda)}_{kj, a, b}(z_j, s)\cdot \widehat{w}_j^a\cdot \prod_{\mu=1}^r\left(\widehat{w}_j^{(\mu)}+\sum_{|c|\geq 2}F^{(\mu)}_{j, c}(z_j, s)\cdot \widehat{w}_j^c\right)^{b_\mu}\nonumber \\
&=&\widehat{w}_j^{(\lambda)}+\sum_{|a|\geq 2}\left(t_{jk}^{(\lambda)}t_{kj}^aF^{(\lambda)}_{k, a}(z_k(0, z_j, s), s)+h_{2, kj, a}^{(\lambda)}(z_j, s)\right)\cdot \widehat{w}_j^a. \nonumber
\end{eqnarray}
It is simply observed that each $h_{1, kj, a}^{(\lambda)}(z_j, s)$ and $h_{2, kj, a}^{(\lambda)}(z_j, s)$ depends only on 
$\{F^{(\lambda)}_{j, b}\}_{|b|<|a|}$ and some known functions, and thus especially it does not depend on $\{F^{(\lambda)}_{j, b}\}_{|b|\geq |a|}$. 
Therefore, according to the observation we made above, we define $\{F^{(\lambda)}_{j, a}\}_{|a|=m+1}$ by Lemma \ref{lem:induction_main}  and Lemma \ref{prop:main_estim} below so that they are the solution of the equation 
\[
F^{(\lambda)}_{j, a}(z_j, s)+h_{1, kj, a}^{(\lambda)}(z_j, s)
=t_{jk}^{(\lambda)}t_{kj}^aF^{(\lambda)}_{k, a}(z_k(0, z_j, s), s)+h_{2, kj, a}^{(\lambda)}(z_j, s), 
\]
or equivalently, 
\[
\delta\{(\mathcal{U}_{j}^{(m)}, F_{j, a}^{(\lambda)})\}
=\{(\mathcal{U}_{jk}^{(m)}, h_{kj, a}^{(\lambda)}(z_j, s))\}
\in \check{Z}^1(\{(\mathcal{U}_{j}^{(m)}\}, \mathcal{O}_{\mathcal{Y}_m}(\mathcal{L}^{a-e_\lambda})), 
\]
where $h_{kj, a}^{(\lambda)}(z_j, s):=h_{1, kj, a}^{(\lambda)}(z_j, s)-h_{2, kj, a}^{(\lambda)}(z_j, s)$, 
when $\{F_{j, b}\}_{|b|\leq m}$ is already decided in a manner such that each {\bf (Property)$_{m'}$} holds for $m'=1, 2, \dots, m$ (inductive assumption). 
See the proof of Lemma \ref{lem:induction_main} 
for the fact that $\{(\mathcal{U}_{jk}^{(m)}, h_{kj, a}^{(\lambda)}(z_j, s))\}
\in \check{Z}^1(\{(\mathcal{U}_{j}^{(m)}\}, \mathcal{O}_{\mathcal{Y}_m}(\mathcal{L}^{a-e_\lambda}))$. 
Here we are denoting 
by $e_\lambda$ the multi-index $(0, 0, \dots, 0, 1, 0, \dots, 0)\in \mathbb{Z}^r$ whose $\lambda$-th entry is one and the other entries are zero, 
by $\mathcal{Y}_m$ the set $p^{-1}(Y\times S_m)$, 
by $\mathcal{U}^{(m)}_j$ the set $p^{-1}(U_j\times S_m)$, 
and by $\mathcal{U}^{(m)}_{jk}$ the intersection $\mathcal{U}^{(m)}_j\cap \mathcal{U}^{(m)}_k$. 

First we construct $F_{j, a}^{(\lambda)}$'s on $S_m^*:=S_m\setminus\mathrm{U}(1)$. 

\begin{lemma}\label{lem:induction_main}
Let $m$ be a positive integer. 
Assume that 
$\{F_{j, b}\}_{|b|\leq m}$ is already decided in a manner such that each {\bf (Property)$_{m'}$} holds for $m'=1, 2, \dots, m$. 
Then there uniquely exist holomorphic functions $\{F^{(\lambda)}_{j, a}\}_{|a|=m+1}$ on $p^{-1}(Y\times S_m^*)$ such that 
\[
F^{(\lambda)}_{j, a}(z_j, s)-t_{jk}^{(\lambda)}t_{kj}^aF^{(\lambda)}_{k, a}(z_k(0, z_j, s), s)
=-h_{1, kj, a}^{(\lambda)}(z_j, s)+h_{2, kj, a}^{(\lambda)}(z_j, s) 
\]
holds on each $U_{jk, s}$ with $s\in S_m^*$, 
and that $\{F^{(\lambda)}_{j, a}\}_{|a|=m+1}$ satisfies {\bf (Property)$_{m+1}$}. 
\end{lemma}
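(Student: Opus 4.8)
The plan is to run the inductive construction of \cite[Theorem 3]{U} (see also \cite[\S 4]{K6}, \cite[\S 3]{K8}) fiberwise, the essential point being that over $S_m^*:=S_m\setminus\mathrm{U}(1)$ the relevant cohomology vanishes identically, so that the fiberwise unique solution patches automatically to a holomorphic one. First I would record that $h^{(\lambda)}_{1,kj,a}$, $h^{(\lambda)}_{2,kj,a}$, and hence $h^{(\lambda)}_{kj,a}:=h^{(\lambda)}_{1,kj,a}-h^{(\lambda)}_{2,kj,a}$, are holomorphic functions on all of $\mathcal{U}^{(m)}_{jk}$: by the inductive hypothesis each $F^{(\lambda)}_{j,b}$ with $|b|\le m$ is defined on a neighborhood of $p^{-1}(Y\times S_{|b|-1})$, and $S_m\subseteq S_{|b|-1}$ since $S_{m'}$ decreases in $m'$; together with {\bf Assumption $2$} this makes all the expansions of \S \ref{section:constr_Fs_outline} legitimate and shows, as observed there, that $h^{(\lambda)}_{kj,a}$ depends only on $\{F^{(\lambda)}_{j,b}\}_{|b|\le m}$ and the known coefficient functions $f^{(\lambda)}_{kj,b}$.

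The core of the proof is the purely formal assertion that $\{(\mathcal{U}^{(m)}_{jk},h^{(\lambda)}_{kj,a})\}$ is a \v{C}ech $1$-cocycle with values in $\mathcal{O}_{\mathcal{Y}_m}(\mathcal{L}^{a-e_\lambda})$. I would prove it by expanding $T_{j\ell}\widehat{w}_\ell$ on a triple overlap $\mathcal{U}^{(m)}_{jk\ell}$ in two ways --- directly, and by first passing through $\widehat{w}_k$ --- and invoking {\bf (Property)$_m$} to see that the contributions of order $\le m$ cancel in both expansions; the order-$(m+1)$ parts then glue into a $1$-cocycle, the line-bundle weight $t^{a-e_\lambda}_{jk}$ coming from the monomial changes $\widehat{w}^a_k=t^a_{kj}\widehat{w}^a_j$ and $\widehat{w}^{(\lambda)}_k=t^{(\lambda)}_{kj}\widehat{w}^{(\lambda)}_j$ on $\mathcal{U}^{(m)}_{jk\ell}$. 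This is the computation underlying the well-definedness of the Ueda classes in \cite[p. 588]{U}, \cite[Proposition 3.6]{K6}, \cite[Lemma 3.6]{K8}; for triples in which some $U_j$ meets $Y_{\rm sing}$ one uses the convention $h^{(\lambda)}_{kj,a}:=-h^{(\lambda)}_{jk,a}$ when $U_j\cap Y_{\rm sing}\ne\emptyset$ (then $k$ is automatically non-singular, and since $U_{jk}\cap Y_{\rm sing}=\emptyset$ the whole triple overlap avoids $Y_{\rm sing}$), exactly as for the Ueda classes in \S \ref{section:review_ueda_theory}. I expect this bookkeeping --- the composition $\widehat{w}_j\to\widehat{w}_k\to\widehat{w}_\ell$, with the nodal case handled via Remark \ref{rmk:rule_extension} --- to be the main technical obstacle.

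Granting the cocycle property, I would solve $\delta\{(\mathcal{U}^{(m)}_j,F^{(\lambda)}_{j,a})\}=\{(\mathcal{U}^{(m)}_{jk},h^{(\lambda)}_{kj,a})\}$ over $S_m^*$ as follows. Fix $\lambda$ and $a$ with $|a|=m+1$, so $|a-e_\lambda|=m$. For $s\in S_m^*$ one has $s\in S_m\subseteq\widehat{S}_m$ and $s\notin\mathrm{U}(1)$, so {\bf Assumption $3$} $(ii)$ forces $L^{a-e_\lambda}_s\not\cong\mathbb{I}_s$; hence $H^1(Y_s,\mathcal{O}_{Y_s}(L^{a-e_\lambda}_s))=0$ by {\bf Assumption $1$}, and also $H^0(Y_s,\mathcal{O}_{Y_s}(L^{a-e_\lambda}_s))=0$ (in the curve configurations of \S \ref{section:setting_smooth} and \S \ref{section:setting_nodal} this follows from {\bf Assumption $1$} by Serre duality, $\omega_{Y_s}$ being trivial; equivalently, a topologically trivial, non-holomorphically-trivial line bundle on $Y_s$ has no nonzero global section). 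Now each $U_j$ and each $U_j\cap U_k$ is Stein, and $S_m^*$ is an open subset of $\mathbb{C}^*$, hence Stein, so the sets $p^{-1}(U_j\times S_m^*)$ and their intersections are Stein and the covering $\{p^{-1}(U_j\times S_m^*)\}$ of $\mathcal{Y}\cap\pi^{-1}(S_m^*)$ is acyclic for $\mathcal{O}(\mathcal{L}^{a-e_\lambda})$. Combining this with Grauert's direct image theorem --- the fiberwise vanishing above and properness of $\pi|_{\mathcal{Y}}$ give $\pi_*\mathcal{O}(\mathcal{L}^{a-e_\lambda})=R^1\pi_*\mathcal{O}(\mathcal{L}^{a-e_\lambda})=0$ over $S_m^*$ --- and the Leray spectral sequence, I obtain $H^0(\mathcal{Y}\cap\pi^{-1}(S_m^*),\mathcal{O}(\mathcal{L}^{a-e_\lambda}))=H^1(\mathcal{Y}\cap\pi^{-1}(S_m^*),\mathcal{O}(\mathcal{L}^{a-e_\lambda}))=0$. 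The vanishing of $\check{H}^1$ gives the existence of $\{F^{(\lambda)}_{j,a}\}_{|a|=m+1}$ on $p^{-1}(Y\times S_m^*)$ solving the stated equation, and the vanishing of $\check{H}^0$ gives uniqueness. (Alternatively, since the $U_{j,s}$ do not vary with $s$, one may view $\delta$ as a holomorphic family of bounded operators $\delta_s\colon\check{C}^0\to\check{Z}^1$ which are bijective for $s\in S_m^*$, and read off holomorphy of $F^{(\lambda)}_{j,a}=\delta_s^{-1}(\{h^{(\lambda)}_{kj,a}\})$ in $s$; this is the family version of the explicit linear-algebra formula in the proof of Lemma \ref{lem:ueda_lem_analogue_toymodel}.)

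Finally, for {\bf (Property)$_{m+1}$} I would substitute $\{F^{(\lambda)}_{j,a}\}_{|a|\le m+1}$ into the functional equation $(\ref{eq:def_w_hat})$: by the very choice of the $F^{(\lambda)}_{j,a}$ as solutions of the above coboundary equation, the comparison of the two expansions of $t^{(\lambda)}_{jk}w^{(\lambda)}_k$ recorded in \S \ref{section:constr_Fs_outline} shows that, for every $a$ with $|a|\le m+1$ and every choice of the remaining $\{F^{(\lambda)}_{j,a}\}_{|a|>m+1}$, the coefficients of $\widehat{w}_j^a$ in $T_{jk}\widehat{w}_k$ and in $\widehat{w}_j$ now coincide, i.e. $T_{jk}\widehat{w}_k=\widehat{w}_j+O(|\widehat{w}_j|^{m+2})$ over $p^{-1}(U_{jk}\times S_m^*)$ whenever $U_{jk}\ne\emptyset$ and $U_j\cap Y_{\rm sing}=\emptyset$. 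The upgrade of this to all of $S_m$, once the $F^{(\lambda)}_{j,a}$ have been extended holomorphically across $\mathrm{U}(1)$, is carried out together with the quantitative estimate of \S \ref{section:estim_Fs}; for the present lemma it suffices to work over $S_m^*$.
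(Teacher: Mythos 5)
Your proposal is correct and follows essentially the same route as the paper: the cocycle property of $\{h^{(\lambda)}_{kj,a}\}$ is obtained from the triple-overlap telescoping identity together with {\bf (Property)$_m$}, and existence/uniqueness over $S_m^*$ come from the fibrewise vanishing $H^0(Y_s,\mathcal{O}_{Y_s}(L_s^{a-e_\lambda}))=H^1(Y_s,\mathcal{O}_{Y_s}(L_s^{a-e_\lambda}))=0$ (Assumptions $1$ and $3$ $(ii)$) combined with Grauert's coherence theorem, the Leray spectral sequence, and the Steinness of $S_m^*$. The only presentational difference is that the paper first makes the order-$(m+1)$ discrepancy concrete by constructing an intermediate defining system $u_j$ (solving the truncated functional equation with $2\le|a|\le m$ via the implicit function theorem), whereas you argue purely with formal expansions; both are adequate.
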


\begin{proof}
As it follows by the same argument as in the end of \S \ref{section:proof_outline} that one can solve the functional equation
\[
w_j^{(\lambda)}=u_j^{(\lambda)}+\sum_{2\leq |a| \leq m}F^{(\lambda)}_{j, a}\cdot u_j^a
\]
to define a new system $\{(\widetilde{\mathcal{V}}_j, u_j)\}$ of local defining functions of $p^{-1}(Y\times S_{m-1}^*)$ by using the implicit function theorem. 
As it holds that
$T_{jk}u_k =u_j + O(|u_j|^{m+1})$ by {\bf (Property)$_{m}$}, 
it follows from the calculation as we described above that 
\[
T_{jk}u_k =u_j - \sum_{|a|=m+1}h_{kj, a}^{(\lambda)}(z_j, s)\cdot u_j^a + O(|u_j|^{m+2}). 
\]
Denote by $h_{jk, a}^{(\lambda)}$ the function $-t_{jk}^{a-e_\lambda}\cdot h_{kj, a}^{(\lambda)}$ when $U_k\cap Y_{\rm sing}\not=\emptyset$. 
Then one easily obtain, by comparing the both hands sides of the expansion of 
$T_{jk}\widehat{w}_k -  \widehat{w}_j = 
T_{j\ell}(T_{\ell k}\widehat{w}_k -  \widehat{w}_\ell) +(T_{j\ell} \widehat{w}_\ell-  \widehat{w}_j )$, that 
$\{(\mathcal{U}_{jk}^{(m)}, h_{kj, a}^{(\lambda)}(z_j, s))\}
\in \check{Z}^1(\{(\mathcal{U}_{j}^{(m)}\}, \mathcal{O}_{\mathcal{Y}_m}(\mathcal{L}^{a-e_\lambda}))$. 

It follows from {\bf Assumption} $1$ and $3$ that $H^1(Y_s, L_s^{a-e_\lambda})=0$ for any 
$s\in S_m^*$. 
Thus one have that 
$R^j({\rm Pr}_2\circ p|_{\mathcal{Y}_m^*})_*\mathcal{L}^{a-e_\lambda}=0$ for each $j>0$ for $\mathcal{Y}_m^*:=p^{-1}(Y\times S_m^*)$, 
by which it follows that 
$H^1(\mathcal{Y}_m^*, \mathcal{L}^{a-e_\lambda})
=H^1(S_m^*, ({\rm Pr}_2\circ p)_*\mathcal{L}^{a-e_\lambda})$. 
As $S_m^*$ is Stein and $({\rm Pr}_2\circ p)_*\mathcal{L}^{a-e_\lambda}$ is coherent by Grauert's theorem, 
one have that $H^1(\mathcal{Y}_m^*, \mathcal{L}^{a-e_\lambda})=0$, which implies the lemma. 
Note that the uniqueness of the solution follows by $H^0(Y_s, L_s^a)=0$ for each $s\in S_m^*$ (Here we used {\bf Assumption $3$} $(ii)$). 
\end{proof}

\subsubsection{Construction on $S_m$ and preliminary $L^\infty$ estimates}

In this subsection, we fix a positive integer $m$ and $\nu\in\{0, 1, 2, \dots, mM_0-1\}$, 
and use the following simple notation: $\xi_0:=\nu/(M_0m)$, $s_0:=\exp(2\pi\sqrt{-1}\xi_0)$. 
Let $W_0$ be the set defined by 
\[
W_0:=\left\{\xi\in \mathbb{C}\left| \left|{\rm Re}\,\xi \leq \xi_0\right|\leq\frac{1}{2M_0m},\ |{\rm Im}\,\xi|<\frac{1}{4M_0m}\right.\right\}. 
\]
Note that it holds that 
$\exp(2\pi\sqrt{-1}W_0)=W_{m, \nu}$. 

In the previous subsection, under the inductive assumption and Lemma \ref{lem:induction_main}, 
we have seen that there exists a holomorphic function 
$F_{j, a}^{(\lambda)}\colon p^{-1}(Y\times S_m^*)\to\mathbb{C}$ for each $a$ with $|a|=m+1$ which enjoys {\bf (Property)$_{m+1}$}, which is the solution of the functional equation 
\[
-F^{(\lambda)}_{j, a}(z_j, s)+t_{jk}^{(\lambda)}t_{kj}^aF^{(\lambda)}_{k, a}(z_k(0, z_j, s), s)
=h_{kj, a}^{(\lambda)}(z_j, s). 
\]
Note that $h_{kj, a}^{(\lambda)}(z_j, s)$ is holomorphic on $\mathcal{Y}_{m-1}$. 
In what follows, we show that $F_{j, a}^{(\lambda)}$ holomorphically extends to $\mathcal{Y}_{m}$, or equivalently, that $F_{j, a}^{(\lambda)}|_{p^{-1}(Y\times W_{m, \nu}^*)}$ holomorphically extends to $p^{-1}(Y\times W_{m, \nu})$, where $W_{m, \nu}^*:=W_{m, \nu}\setminus\{s_0\}$ (for each $\nu$). 

Fist, let us note that, 
when $L^{a-e_\lambda}_{s_0}$ is not holomorphically trivial, 
one can construct 
the function $F_{j, a}^{(\lambda)}$ as the one defined on $p^{-1}(Y\times W_{m, \nu})$ by exactly the same argument as in the proof of Lemma \ref{lem:induction_main}. 
Note also that, in this case, 
it follows from {\bf Assumption $4$} that 
\begin{equation}\label{eq_estim_on_W_Ls0hol_non_triv_case}
\max_j\sup_{U_{j, s}}|F^{(\lambda)}_{j, a}|
\leq \frac{K}{m\cdot (2M_0m)^{-1}}\max_{j, k}\sup_{U_{jk, s}}|h_{kj, a}^{(\lambda)}|
= 2M_0K\cdot \max_{j, k}\sup_{U_{jk, s}}|h_{kj, a}^{(\lambda)}|. 
\end{equation}

When $L^{a-e_\lambda}_{s_0}$ is holomorphically trivial, we show the following: 

\begin{proposition}\label{prop:main_estim}
Let $a$ be a multi-index with $|a|=m+1$, and $\lambda$ be an element of $\{1, 2, \dots, r\}$. 
Denote by $a'$ the multi-index $\alpha-e_\lambda$. 
Assume that $L^{a'}_{s_0}$ is holomorphically trivial. 
Let 
$\alpha=\{(U_{jk}\times W_{m, \nu}, \alpha_{jk}dw_j^{a'})\}\in\check{Z}^1(\{U_j\times W_{m, \nu}\}, \mathcal{O}_{\mathcal{Y}}(\mathcal{L}^{a'}))$
be a $1$-cocycle with $M:=\max_{j, k}\sup_{U_{jk}\times W_{m, \nu}}|\alpha_{jk}|<\infty$, 
where we are identifying $\mathcal{L}$ with $p^*\mathcal{L}$ via $p$. 
For each $\zeta\in W_0$, denote by 
$\alpha_\xi=\{(U_{jk}\times \{\exp(2\pi\sqrt{-1}\xi)\}, \alpha_{\zeta, jk}dw_j^{a'})\}\in\check{Z}^1(\{U_j\times \{\exp(2\pi\sqrt{-1}\xi)\}, \mathcal{O}_{Y_{\exp(2\pi\sqrt{-1}\xi)}}(L^{a'}_{\exp(2\pi\sqrt{-1}\xi)}))$ 
the $1$-cocycle obtained by restricting $\alpha$ 
to $Y\times \{\exp(2\pi\sqrt{-1}\xi)\}$. 
Then the followings are equivalent: \\
$(i)$ $[\alpha_{\xi_0}]=0\in \check{H}^1(\{U_{j, s_0}\}, \mathcal{O}_{Y_{s_0}})$. \\
$(ii)$ $[\alpha]=0\in \check{H}^1(\{U_j\times W_{m, \nu}\}, \mathcal{O}_\mathcal{Y}(\mathcal{L}^{a'}|_{Y\times W_{m, \nu}}))$. 
\end{proposition}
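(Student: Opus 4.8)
The implication $(ii)\Rightarrow(i)$ is immediate: restricting a global primitive $\beta\in\check C^0(\{U_j\times W_{m,\nu}\},\mathcal O_{\mathcal Y}(\mathcal L^{a'}))$ to the slice $Y\times\{s_0\}$ produces a primitive of $\alpha_{\xi_0}$. So the plan is to prove $(i)\Rightarrow(ii)$. The guiding idea is the one sketched for the toy model in \S\ref{section:our_strategy_in_toy_model}, made rigorous in the sheaf-cohomological setting: on the punctured parameter disk $W_{m,\nu}^*:=W_{m,\nu}\setminus\{s_0\}$ the line bundle $L^{a'}_{\exp(2\pi\sqrt{-1}\xi)}$ is never holomorphically trivial (by {\bf Assumption $3$} $(ii)$, since $s_0=\zeta_{M_0m}^\nu$ is the only torsion point of $W_{m,\nu}$ relevant here), so $H^0=H^1=0$ fiberwise there. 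First I would argue, exactly as in the proof of Lemma \ref{lem:induction_main}, that this forces $H^1(p^{-1}(Y\times W_{m,\nu}^*),\mathcal L^{a'})=0$: the higher direct images $R^q({\rm Pr}_2\circ p)_*\mathcal L^{a'}$ vanish over $W_{m,\nu}^*$ by Grauert, so the cohomology on the total space reduces to $H^1(W_{m,\nu}^*,({\rm Pr}_2\circ p)_*\mathcal L^{a'})$, which vanishes because $W_{m,\nu}^*$ is a (non-compact) Stein curve and the pushforward is coherent. Hence there is a \emph{unique} primitive $\beta^*=\{(U_j\times W_{m,\nu}^*,\beta_j^*dw_j^{a'})\}$ of $\alpha|_{Y\times W_{m,\nu}^*}$.

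The crux is then to show that $\beta^*$ extends holomorphically across the slice $\{s_0\}$, i.e. across a codimension-one subvariety of the total space. Since the obstruction to extension is local around $Y\times\{s_0\}$ and $\beta^*$ is already bounded away from that slice, by the Riemann extension theorem it suffices to show $\beta^*$ is locally bounded near $Y\times\{s_0\}$. Here I would invoke {\bf Assumption $4$} (the Ueda-type lemma) fiberwise: for each $s=\exp(2\pi\sqrt{-1}\xi)\in W_{m,\nu}^*$ we have $\|\beta^*_s\|_{s,a'}\le K\cdot\bigl(m\,d_{a',\nu}(s)\bigr)^{-1}\|\alpha_s\|_{s,a'}\le K\cdot\bigl(m\,d_{a',\nu}(s)\bigr)^{-1}M$ (after controlling the unitary-frame correction factors $\sigma_j$, which cost only a bounded constant by the estimates used in the proof of Lemma \ref{lem:ueda_4_new}/Lemma \ref{lem:ueda_4_new_nodal}, exactly as in {\bf Assumption $4$}'s statement). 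Because $d_{a',\nu}(s)=d(s,\zeta^\nu_{M_0m})$ on the slices where $L^{a'}_{s_0}$ is trivial, this gives $\|\beta^*_s\|\le C/\,\mathrm{dist}(\xi,\xi_0)$, i.e. at worst a simple pole in the parameter. A naive pole bound is not by itself enough for holomorphic extension; the standard device — which is precisely the "convexity/maximum principle" step foreshadowed in \S\ref{section:our_strategy_in_toy_model} — is to examine the Laurent expansion of $\beta^*_j$ in the parameter $\xi$ around $\xi_0$ and show the principal (residue) part must vanish. Concretely: the residue of $\beta^*$ at $\xi_0$ is a $0$-cochain whose coboundary is the residue of $\alpha$ there, which is $0$ since $\alpha$ is holomorphic across $\{s_0\}$; so the residue is a global section of $\mathcal O_{Y_{s_0}}(L^{a'}_{s_0})$. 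Using hypothesis $(i)$ together with the fact that on the torsion slice $Y_{s_0}$ one has (via {\bf Assumption $1$} and the torsion condition) $H^0(Y_{s_0},\mathcal O_{Y_{s_0}})\cong\mathbb C$ while $[\alpha_{\xi_0}]=0$ pins down the constant to be consistent with an actual (bounded) primitive $\beta_0$ on the slice — so the residue is forced to be $0$. Therefore $\beta^*$ is in fact holomorphic across $\{s_0\}$, and matching it with the slice-primitive $\beta_0$ along $\{s_0\}$ gives the desired global primitive $\beta\in\check C^0(\{U_j\times W_{m,\nu}\},\mathcal O_{\mathcal Y}(\mathcal L^{a'}))$, proving $(ii)$.

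The main obstacle I anticipate is the last step: ruling out the residue. The toy-model version (Proposition \ref{prop:mainlemma_toy}) got this for free because there the explicit formula $\beta_1(s)=-A(s)/(1-s^m)$ together with $A(\zeta_m^\nu)=0$ exhibited the removability directly. In the general sheaf setting one does not have such closed formulas, so the argument must be made cohomologically: one writes $\beta^*=\xi\mapsto (\xi-\xi_0)^{-1}\gamma+(\text{holomorphic})$ for a cochain $\gamma$ (a priori a $0$-cochain with values in the locally constant sheaf $\mathbb C(L^{a'}_{s_0})$), notes $\delta\gamma=0$ because $\alpha$ has no pole, so $\gamma$ represents a class in $H^0(Y_{s_0},\mathcal O_{Y_{s_0}}(L^{a'}_{s_0}))$; one then has to leverage \emph{both} that this $H^0$ is small (one-dimensional, by {\bf Assumption $1$} on the non-torsion components plus the combinatorics of the cycle/elliptic slice) \emph{and} that $(i)$ holds. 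A secondary technical point is the bookkeeping for singular $U_k$ (the nodal case $r=1$): one must use the extension convention of Remark \ref{rmk:rule_extension} and the normalization-at-$p_0$ trick from the proof of Lemma \ref{lem:ueda_4_new_nodal} so that all the fiberwise estimates and the residue computation go through uniformly. I would organize the write-up so that the Stein/Grauert reduction and the uniqueness of $\beta^*$ come first, then the boundedness estimate from {\bf Assumption $4$}, then the residue-vanishing argument, and finally the gluing with $\beta_0$.
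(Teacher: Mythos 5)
Your reduction to $(i)\Rightarrow(ii)$, the Grauert--Stein argument producing the unique primitive $\beta^*$ of $\alpha$ on the punctured parameter set, and the simple-pole bound $\|\beta^*_s\|\le KM/(m\,d(s,s_0))$ coming from \textbf{Assumption $4$} are all sound. But the step you yourself flag as the main obstacle --- forcing the residue of $\beta^*$ at $\xi_0$ to vanish --- is where your argument does not close, and the paper's proof is organized precisely so that this step never arises. Write $\beta^*_j=(\xi-\xi_0)^{-1}\gamma_j+(\text{holomorphic})$. Taking residues in $\delta\beta^*=\alpha$ gives $\delta_{s_0}\gamma=0$, so $\gamma$ is a global section of the trivial bundle $L^{a'}_{s_0}$, i.e.\ a constant $c$ times the flat frame; comparing the constant terms of the Laurent expansions then gives $\alpha_{\xi_0}-\delta_{s_0}\bigl(\beta^{*,{\rm hol}}|_{\xi_0}\bigr)=\{c\cdot(\partial_\xi t^{a'}_{kj})(\xi_0)\,e_k\}$. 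Hypothesis $(i)$ therefore yields only $c\cdot\bigl[\{(\partial_\xi t^{a'}_{kj})(\xi_0)\,e_k\}\bigr]=0$ in $H^1(Y_{s_0},\mathcal{O}_{Y_{s_0}})$; to conclude $c=0$ you need the non-vanishing of this Kodaira--Spencer class of the family $s\mapsto L^{a'}_s$ at $s_0$. That non-degeneracy holds in the concrete configurations of \S 4 (where the classifying map to ${\rm Pic}^0(Y)$ is an isomorphism), but it is not among \textbf{Assumptions} $1$--$5$ and is not supplied by ``$H^0$ is one-dimensional plus $(i)$'', which is all your sketch invokes; in the toy model it is hidden in the fact that $1-s^m$ has a \emph{simple} zero at $\zeta_m^\nu$.

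The paper avoids the residue entirely by spending hypothesis $(i)$ at the very beginning: it takes a slice primitive $\beta_{\xi_0}$ of $\alpha_{\xi_0}$ with the Kodaira--Spencer bound $K_{\rm KS}M$, extends it to $Y\times W_{m,\nu}$ as a cochain constant in the parameter, and sets $\gamma:=\alpha-\delta\beta$. This $\gamma$ vanishes identically on the slice $\{s_0\}$, so $\gamma_{jk}/(\xi-\xi_0)$ is holomorphic on all of $U_{jk}\times W_0$ and the maximum principle bounds it by its boundary values, where $|\xi-\xi_0|\ge(4M_0m)^{-1}$. Feeding this into \textbf{Assumption $4$} gives a bound on the unique primitive $\widetilde\beta$ of $\gamma$ over the punctured set that is uniform in $\xi$, so Riemann's extension theorem applies directly and $\beta+\widetilde\beta$ is the desired global primitive (this uniform bound is also what produces the explicit constant in $(\ref{eq_estim_on_W_Ls0hol_triv_case})$, which the later induction needs). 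To salvage your route you would have to either derive the Kodaira--Spencer non-degeneracy from the stated assumptions (one can argue it is forced by \textbf{Assumption $4$}, but that is a separate argument you have not given) or adopt the subtraction trick above.
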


\begin{proof}
As clearly $(ii)$ implies $(i)$, we show the converse.  
Assume that the assertion $(i)$ holds. 
In what follows, we use the estimate $\max_{j, k}\sup_{s\in W_{m, \nu}}|t_{jk}^{(\lambda)}(s)|\leq \Theta^{1/m}\leq\Theta$ follows from {\bf Assumption $3$} $(iii)$. 

By the assertion $(i)$, it follows from \cite[Lemma 3]{U} (=\cite[Lemma 2]{KS}, for the smooth case) or \cite[Lemma 4.1]{K6} (for the singular case) that 
there exists a $0$-cochain 
$\beta_{\xi_0}=\{(U_{j}\times \{s_0\}, \beta_{\xi_0, j}dw_j^{a'})\}\in\check{C}^0(\{U_j\times \{s_0\}, \mathcal{O}_{Y_{s_0}}(L^{a'}_{s_0}))$ 
with $\delta \beta_{\xi_0}=\alpha_{\xi_0}$ such that 
\[
\max_j\sup_{U_{j, s_0}}|\beta_{\xi_0, j}|\leq  K_{\rm KS}M 
\]
holds, where $K_{\rm KS}$ is the constant which depends only on $Y$ and $\{U_j\}$. 
Define a holomorphic function $\beta_j\colon U_j\times W_{m, \nu}\to \mathbb{C}$ by 
$\beta_j(z_j, s):=\beta_{\xi_0, j}(z_j)$, a $0$-cochain $\beta$ by 
\[
\beta:=\{(U_{j}\times W_{m, \nu}, \beta_jdw_j^{a'})\}\in\check{C}^0(\{U_j\times  W_{m, \nu}\}, \mathcal{O}_{\mathcal{Y}}(\mathcal{L}^{a'})), 
\]
and a $1$-cocycle 
\[
\gamma=\{(U_{jk}\times W_{m, \nu}, \gamma_{jk})\}\in\check{Z}^1(\{U_j\times  W_{m, \nu}\}, \mathcal{O}_{\mathcal{Y}}(\mathcal{L}^{a'}))
\]
by $\gamma:=\alpha-\delta\beta$. 
In what follows, we will construct a primitive $\widetilde{\beta}$ of $\gamma$ 
(Then clearly it holds that $\delta(\beta+\widetilde{\beta})=\alpha$, which proves the lemma). 
Note that 
\[
\max_{j, k}\sup_{U_{jk}\times W_{m, \nu}}|\gamma_{jk}|\leq M+(1+\Theta)\cdot K_{\rm KS}M=(1+K_{\rm KS}+\Theta K_{\rm KS})M. 
\]
Denote by 
$W_0^*$ the set $W_0\setminus\{\zeta_0\}$ and by $W_{m, \nu}^*$ the image $\exp(2\pi\sqrt{-1}W_0^*)$. 
It follows by the same argument as in the proof of 
Lemma \ref{lem:induction_main}, one have that 
there uniquely exists a holomorphic function $\widetilde{\beta}_j\colon U_j\times W_{m, \nu}^*\to \mathbb{C}$ such that the $0$-cochain $\widetilde{\beta}$ defined by 
\[
\widetilde{\beta}:=\{(U_{j}\times W_{m, \nu}^*, \widetilde{\beta}_jdw_j^{a'})\}\in\check{C}^0(\{U_j\times W_{m, \nu}^*\}, \mathcal{O}_{\mathcal{Y}}(\mathcal{L}^{a'}))
\]
satisfies $\delta\widetilde{\beta}=\alpha|_{Y\times W_{m, \nu}^*}$. 
According to {\bf Assumption $4$}, one have the inequality 
\[
m\cdot |\xi-\xi_0|\cdot \|\widetilde{\beta}\|_{\exp(2\pi\sqrt{-1}\xi), a'}\leq K\cdot \|\gamma\|_{\exp(2\pi\sqrt{-1}\xi), a'}
\]
for each $\xi\in W_0^*$. 
Therefore, for each element $z_j\in U_j$ and each $\xi\in W_0^*$, one have that
\begin{eqnarray}
|\widetilde{\beta}(z_j, \exp(2\pi\sqrt{-1}\xi))|&\leq& \frac{K}{m|\xi-\xi_0|}\cdot \max_{j, k}\sup_{p\in U_{jk}}|\gamma_{jk}(p, \exp(2\pi\sqrt{-1}\xi))|\nonumber \\
&=&\frac{K}{m}\cdot \max_{j, k}\sup_{p\in U_{jk}}\left|\frac{\gamma_{jk}(p, \exp(2\pi\sqrt{-1}\xi))}{\xi-\xi_0}\right|\nonumber
\end{eqnarray}
holds. As $\gamma_{jk}|_{\xi=\xi_0}\equiv 0$ by construction, one have that the function 
\[
U_{jk}\times W_0^*\ni (p, \xi)\mapsto \frac{\gamma_{jk}(p, \exp(2\pi\sqrt{-1}\xi))}{\xi-\xi_0}\in \mathbb{C}
\]
can also be regarded as a holomorphic function defined on $U_{jk}\times W_0$. 
Thus it follows by the maximum principle that there exists a point $(p_*, \xi_*)$ of the boundary $\partial U_{jk}\times \partial W_0$ which attains the maximum 
\[
\sup_{(p, \xi)\in U_{jk}\times W_0}\left|\frac{\gamma_{jk}(p, \exp(2\pi\sqrt{-1}\xi))}{\xi-\xi_0}\right|. 
\]
Therefore, by $|\xi_*- \xi_0|\geq \frac{1}{4M_0m}$, we have that 
\begin{eqnarray}
\sup_{(p, \xi)\in U_{jk}\times W_0}\left|\frac{\gamma_{jk}(p, \exp(2\pi\sqrt{-1}\xi))}{\xi-\xi_0}\right|
&=& \frac{|\gamma_{jk}(p_*, \exp(2\pi\sqrt{-1}\xi_*))|}{|\xi_*-\xi_0|} \nonumber \\ 
&\leq&\frac{(1+K_{\rm KS}+\Theta K_{\rm KS})M}{(4M_0m)^{-1}}\nonumber \\
&=&4M_0m\cdot (1+K_{\rm KS}+\Theta K_{\rm KS})M\nonumber
\end{eqnarray}
holds. 
Thus we obtain the estimate 
\[
|\widetilde{\beta}(z_j, \exp(2\pi\sqrt{-1}\xi))|\leq 
\frac{K}{m}\cdot \max_{j, k}\sup_{p\in U_{jk}}\left|\frac{\gamma_{jk}(p, \exp(2\pi\sqrt{-1}\xi))}{\xi-\xi_0}\right|
\leq 4M_0K\cdot (1+K_{\rm KS}+\Theta K_{\rm KS})M
\]
for each $z_j\in U_j$ and $\xi\in W_0^*$, 
by which one can apply Riemann's extension theorem to conclude that $\widetilde{\beta}$ can be holomorphically extended to $\xi=\xi_0$. 
\end{proof}

As is seen in \S \ref{section:estim_Fs}, 
the assumption ``$M<\infty$'' in Proposition \ref{prop:main_estim} will be inductively assured for the case where $\alpha$ is the one defined by $h_{kj, a}^{(\lambda)}$. 
For such case, the assertion $(i)$ of Proposition \ref{prop:main_estim} actually holds by {\bf Assumption $5$} and the argument as in \cite[p. 598]{U} (see also \cite[Remark 3.5]{K6} and \cite[\S 4.2.1]{K6} for singular case, \cite[\S 3.2]{K8} and \cite[Claim 4.3, 4.4]{K8} for higher codimensional case), by which we obtain that the solution $F_{j, a}^{(\lambda)}$'s of 
\[
-F^{(\lambda)}_{j, a}(z_j, s)+t_{jk}^{(\lambda)}t_{kj}^aF^{(\lambda)}_{k, a}(z_k(0, z_j, s), s)
=h_{kj, a}^{(\lambda)}(z_j, s)
\]
actually exist on $\mathcal{Y}_m$. 
Note also that it follows by the argument in the proof of Proposition \ref{prop:main_estim} that the inequality 
\begin{equation}\label{eq_estim_on_W_Ls0hol_triv_case}
\max_j\sup_{U_{j, s}}|F^{(\lambda)}_{j, a}|
\leq ((1+\Theta)K_{\rm KS}+4M_0K\cdot (1+K_{\rm KS}+\Theta K_{\rm KS}))\cdot \max_{j, k}\sup_{U_{jk, s}}|h_{kj, a}^{(\lambda)}|
\end{equation}
holds for each $s\in W_{m, \nu}$. 

\subsection{Estimates of $|F^{(\lambda)}_{j, a}|$}\label{section:estim_Fs}

Here we will construct positive constants $\{A_m\}_{m=2}^\infty$ such that 
\[
\max_{j}\max_{|a|=m+1}\sup_{p^{-1}(U_j\times S_m)}|F^{(\lambda)}_{j, a}|\leq A_{m+1}
\]
holds and that the formal series 
\[
A(X^1, X^2, \dots, X^r):=\sum_{|a|\geq 2}A_{|a|}X^a
:=\sum_{|a|\geq 2}A_{|a|}\prod_{\nu=1}^r(X^\nu)^{a_\nu}
\]
is convergent. 

First, we define a new open covering $\{U_j^*\}$ of $Y$ whose index set coincides with that of $\{U_j\}$ as follows: 
Set $U_k^*:=U_k$ if $U_k\cap Y_{\rm sing}\not=\emptyset$, and 
$U_j^*$ is a relatively compact subset of $U_j$ if $U_j\cap Y_{\rm sing}=\emptyset$ such that $U_{jk}\not=\emptyset$ if and only if $U_j^*\cap U_k^*\not=\emptyset$. 
By {\bf  Assumption} $3$ $(i)$, one have that 
\[
\{(z_j, w_j, s)\in \mathcal{V}_j\mid s\in S_1,\ z_j\in U_j\cap U_k^*,\ |w_j^{(\lambda)}|<1/R\ \text{for each}\ \lambda\in\{1, 2, \dots, r\}\}\Subset \mathcal{V}_k
\]
holds for sufficiently large constant $R$ if both $U_j$ and $U_k$ are smooth. 

When $U_k$ is singular, we need to modify $U_k$ and some of the elements of $\{U_j\}$ as follows if necessary (see also \cite[Remark 4.3]{K6}). 
Take a sufficiently small positive constant $\varepsilon$ such that 
$\mathcal{V}_{(k, 3)}\Subset \mathcal{V}_k$, where 
$\mathcal{V}_{(k, \mu)}:=\{(x_k, y_k, s)\in\mathcal{V}_k\mid \max\{|x_k|,\ |y_k|\}<\mu\cdot \varepsilon,\ s\in S_1\}$ for $\mu=1, 2, 3$. 
Denote by $U_{(k, \mu)}$ the subset $\{(x_k, y_k)\in U_k\mid \max\{|x_k|,\ |y_k|\}<\mu\cdot \varepsilon\}$ for $\mu=1, 2, 3$. 
For non-singular $U_j$ such that $U_{(k, 3)}\cap U_j\not=\emptyset$ for some singular $U_k$, we will divide $U_j$ into $U_{(j, 3)}:=U_{(k, 3)}\cap U_j$ and $U_{(j, 2)}:=U_j\setminus \overline{U_{(k, 2)}}$, and use 
\begin{eqnarray}
&&\{U_j\mid U_j\ \text{is smooth},\ U_{(k, 3)}\cap U_j=\emptyset\ \text{for any singular}\ U_k\}\nonumber \\
&\cup&\{U_{(j, 2)}\mid U_{(k, 3)}\cap U_j\not=\emptyset\ \text{for some singular}\ U_k\}\nonumber \\
&\cup&\{U_{(j, 3)}\mid U_{(k, 3)}\cap U_j\not=\emptyset\ \text{for some singular}\ U_k\}\nonumber \\
&\cup&\{U_{(k, 1)}\mid U_k\ \text{is singular}\}\nonumber
\end{eqnarray}
as a new open covering of $Y$. 
We will use the restriction of the original coordinate $(x_k, y_k, s)$ on a neighborhood $\mathcal{V}_{(k, 1)}$ of $p^{-1}(U_{(k, 1)}\times S)$ for each $k$ such that $U_k$ is singular, 
the restriction of the original coordinate $(z_j, w_j, s)$ on a neighborhood of $\mathcal{V}_{(j, 2)}$ of $p^{-1}(U_{(j, 2)}\times S)$ for each $U_{(j, 2)}$ ($\mathcal{V}_{(j, 2)}$ is, for example, a subset of $\mathcal{V}_j\setminus \overline{\mathcal{V}_{(k, 2)}}$). 
For each $U_{(j, 3)}$, we use the restriction of $(x_k, y_k, s)$ as the coordinate of a neighborhood. 
In this case, either $x_k\equiv 0$ or $y_k\equiv 0$ holds on $p^{-1}(U_{(j, 3)}\times S)$. 
When, for example, $y_k\equiv 0$ holds on $p^{-1}(U_{(j, 3)}\times S)$, 
we use the set $\mathcal{V}_{(j, 3)}:=\{(x_k, y_k, s)\in \mathcal{V}_k\mid x_k\in U_{(j, 3)},\ |y_k|<\varepsilon\}$ as a neighborhood of $p^{-1}(U_{(j, 3)}\times S)$, and use $w_{(j, 3)}:= y_k$ and $z_{(j, 3)}:=x_k$ as coordinates. 

After modifying $\{U_j\}$ and $\{\mathcal{V}_j\}$ in such manner, we may assume that 
\[
\{(z_j, w_j, s)\in \mathcal{V}_j\mid s\in S_1,\ z_j\in U_j\cap U_k,\ |w_j^{(\lambda)}|\leq 1/R\ \text{for each}\ \lambda\in\{1, 2, \dots, r\}\}\subset \mathcal{V}_k
\]
holds for any smooth $U_j$ even if $U_k$ is singular (by letting $1/R<\varepsilon/2$). 

Denote by  $M$ the constant $\Theta\cdot \max_\lambda\max_j\sup_{\mathcal{V}_j}|w_j^{(\lambda)}|$, which is finite by {\bf Assumption $1$} $(ii)$. 
By Cauchy's inequality, one have that 
\[
\max_{j, k}\sup_{(U_j\cap U_k^*)\times S} |f_{kj, a}^{(\lambda)}|\leq MR^{|a|}. 
\]
For any $a$. 
It follows by $h_{kj, a}^{(\lambda)}=f_{kj, a}^{(\lambda)}$ that 
$\max_{j, k}\sup_{(U_j\cap U_k^*)\times S_1} |h_{kj, a}^{(\lambda)}|\leq MR^2$. 
Therefore one have by the cocycle condition and {\bf Assumption $3$} $(iii)$ that, for each $(p, s)\in U_{jk}\times S_1$, it holds that 
\begin{eqnarray}
|h_{kj, a}^{(\lambda)}(p, s)|&\leq& |t_{jk}^{(\lambda)}t_{kj}^a h_{k\ell, a}^{(\lambda)}(p, s)| + |t_{j\ell}^{(\lambda)}t_{\ell j}^a h_{\ell j, a}^{(\lambda)}(p, s)|\nonumber \\
&=& |h_{\ell k, a}^{(\lambda)}(p, s)| + |t_{j\ell}^{(\lambda)}t_{\ell j}^a h_{\ell j, a}^{(\lambda)}(p, s)|
\leq (1+\Theta)MR^2\nonumber
\end{eqnarray}
even if $p\not \in U_k^*$, where $\ell$ is the one such that $p\in U_\ell^*$ 
(Note that $U_k$ is smooth if $p\not \in U_k$ by construction of $\{U_j^*\}$). 
Thus we obtain the inequality 
\[
\max_{jk}\max_{|a|=2}\sup_{p^{-1}(U_{jk}\times S_1)}|h_{kj, a}^{(\lambda)}|\leq B_1:=(1+\Theta)MR^2. 
\]

Consider the division $\overline{S_m}=\bigcup_{\nu=0}^{mM_0-1}W_{m, \nu}$ for $m=1$. 
Take a multi-index $a$ with  
$|a|=2$, $\lambda\in\{1, 2, \dots, \lambda\}$, 
and $\nu\in\{0, 1, \dots, M_0-1\}$. 
When $L^{a-e_\lambda}_{\exp(2\pi\sqrt{-1}\nu/M_0)}$ is holomorphically trivial, 
it follows from the inequality $(\ref{eq_estim_on_W_Ls0hol_triv_case})$ that 
\[
\max_{j}\sup_{p^{-1}(U_j\times W_{1, \nu})}|F_{j, a}^{(\lambda)}|\leq 
\left((1+\Theta)K_{\rm KS}+4M_0K\cdot (1+K_{\rm KS}+\Theta K_{\rm KS})\right)B_1
\]
holds (Note again that here we used {\bf Assumption $5$} to assure the assertion $(i)$ of Proposition \ref{prop:main_estim}. Here we used the argument as in \cite[p. 598]{U}, \cite[Remark 3.5]{K6}, \cite[\S 4.2.1]{K6}, \cite[\S 3.2]{K8}, \cite[Claim 4.3, 4.4]{K8} in order to see that the $1$-cocycle defined by $h_{kj, a}^{(\lambda)}$ coincides with the Ueda class, see also the proof of Lemma \ref{lem:induction_main}). 
In what follows, we let $K>K_{\rm KS}>1$. Then one have that 
\[
\max_{j}\max_{|a|=2}\sup_{p^{-1}(U_j\times W_{1, \nu})}|F_{j, a}|\leq 
14M_0\Theta K^2\cdot B_1
\]
holds. 
When $L^{a-e_\lambda}_{\exp(2\pi\sqrt{-1}\nu/M_0)}$ is not holomorphically trivial, 
it follows from the inequality $(\ref{eq_estim_on_W_Ls0hol_non_triv_case})$ that 
\[
\max_{j}\sup_{p^{-1}(U_j\times W_{1, \nu})}|F_{j, a}|\leq 2M_0KB_1
\]
holds. 
Thus one have that $\max_{j}\max_{|a|=2}\sup_{p^{-1}(U_j\times S_1)}|F_{j, a}|\leq A_2$ holds if one let 
$A_2:=14M_0\Theta K^2\cdot (1+\Theta)MR^2$. 

Assume that we have decided $A_2, A_3, \dots, A_m$ suitably. 
Then, as $h_{1, kj, a}^{(\lambda)}$'s are defined by
\[
\sum_{|a|\geq 2} f^{(\lambda)}_{kj, a}(z_j, s)\cdot \prod_{\mu=1}^r\left(\widehat{w}_j^{(\mu)}+\sum_{|b|\geq 2}F^{(\mu)}_{j, b}(z_j, s)\cdot \widehat{w}_j^b\right)^{a_\mu} 
=\sum_{|a|\geq 2}h_{1, kj, a}^{(\lambda)}(z_j, s)\cdot \widehat{w}_j^a, 
\]
it follows that the value of  
$\max_{jk}\max_{|a|=m+1}\sup_{p^{-1}((U_j\cap U_k^*)\times S_m)}|h_{1, kj, a}^{(\lambda)}|$ is bounded from above by the coefficient of $X^a$ in the expansion of 
\[
\sum_{|a|\geq 2} MR^{|a|}\cdot \prod_{\mu=1}^r\left(X^{\mu}+A(X)\right)^{a_\mu} 
=M\prod_{\nu=1}^r\frac{1}{1-R(X^{\nu}+A(X))}
-M-MR\sum_{\nu}(X^{\nu}+A(X)), 
\]
where $A(X)=A(X^1, X^2, \dots, X^r)$. 
Note that the coefficient of $X^a$ in the expansion of the above depends only on $A_2, A_3, \dots, A_m$ if $|a|=m+1$. 
Similarly, as $h_{2, kj, a}^{(\lambda)}$'s are defined by  
\begin{eqnarray}
&&\sum_{|a|\geq 2}t_{jk}^{(\lambda)}t_{kj}^a\sum_{|b|\geq 1}F^{(\lambda)}_{kj, a, b}(z_j, s)\cdot \widehat{w}_j^a\cdot \prod_{\mu=1}^r\left(\widehat{w}_j^{(\mu)}+\sum_{|c|\geq 2}F^{(\mu)}_{j, c}(z_j, s)\cdot \widehat{w}_j^c\right)^{b_\mu}\nonumber \\
&=&\sum_{|a|\geq 2}h_{2, kj, a}^{(\lambda)}(z_j, s)\cdot \widehat{w}_j^a, \nonumber
\end{eqnarray}
the value of $\max_{jk}\max_{|a|=m+1}\sup_{p^{-1}((U_j\cap U_k^*)\times S_m)}|h_{2, kj, a}^{(\lambda)}|$ is bounded from above by the coefficient of $X^a$ in the expansion of 
\[
\sum_{|a|\geq 2}\Theta\sum_{|b|\geq 1}3A_{|a|}R^{|b|}\cdot X^a\cdot \prod_{\mu=1}^r\left(X^{\mu}+A(X)\right)^{b_\mu}
=3\Theta A(X)\cdot \sum_{|b|\geq 2}\prod_{\mu=1}^r\left(RX^{\mu}+RA(X)\right)^{b_\mu}. 
\]
Here we used the estimate $|F^{(\lambda)}_{kj, c, d}|\leq 3A_{|c|}R^{|d|}$ which is obtained by Cauchy's inequality and the rule we described in Remark \ref{rmk:rule_extension}. 

Therefore, one have that the value of 
$\max_{jk}\max_{|a|=m+1}\sup_{p^{-1}((U_j\cap U_k^*)\times S_m)}|h_{kj, a}^{(\lambda)}|$ 
is bounded from above by the coefficient of $X^a$ in the expansion of 
\begin{eqnarray}
&&M\cdot \left(\prod_{\nu=1}^r\frac{1}{1-R(X^{\nu}+A(X))}
-1-R\sum_{\nu}(X^{\nu}+A(X))\right)\nonumber \\
&+&3\Theta A(X)\cdot \left(\prod_{\nu=1}^r\frac{1}{1-R(X^{\nu}+A(X))}
-1\right).\nonumber
\end{eqnarray}
Take a point $(p, s)\in U_{jk}\times S_m$. 
When $p\not \in U_k$, as $U_k$ is smooth by construction of $\{U_j^*\}$ in this case, 
it follows from the cocycle condition and {\bf Assumption $3$} $(iii)$ that 
\begin{eqnarray}
|h_{kj, a}^{(\lambda)}(p, s)|&\leq& |t_{jk}^{(\lambda)}t_{kj}^a h_{k\ell, a}^{(\lambda)}(p, s)| + |t_{j\ell}^{(\lambda)}t_{\ell j}^a h_{\ell j, a}^{(\lambda)}(p, s)|
= |h_{\ell k, a}^{(\lambda)}(p, s)| + |t_{j\ell}^{(\lambda)}t_{\ell j}^a h_{\ell j, a}^{(\lambda)}(p, s)|\nonumber \\
&\leq & (1+\Theta)\cdot \max_{jk}\max_{|a|=m+1}\sup_{p^{-1}((U_j\cap U_k^*)\times S_m)}|h_{kj, a}^{(\lambda)}|, \nonumber
\end{eqnarray}
where $\ell$ is an index such that $p\in U_\ell^*$. 

Thus one have that the value of 
$\max_{jk}\max_{|a|=m+1}\sup_{p^{-1}(U_{jk}\times S_m)}|h_{kj, a}^{(\lambda)}|$ 
is bounded by the coefficient of $X^a$ in the expansion of 
\begin{eqnarray}
&&(1+\Theta)M\cdot \left(\prod_{\nu=1}^r\frac{1}{1-R(X^{\nu}+A(X))}
-1-R\sum_{\nu}(X^{\nu}+A(X))\right)\nonumber \\
&+&3(1+\Theta)\Theta A(X)\cdot \left(\prod_{\nu=1}^r\frac{1}{1-R(X^{\nu}+A(X))}
-1\right), \nonumber 
\end{eqnarray}
which will be denoted by $B_m$. 

Take a multi-index $a$ with $|a|=m+1$, an element $\nu\in\{0, 1, \dots, mM_0-1\}$, and $\lambda\in\{1, 2, \dots, r\}$. 
When $L^{a-e_\lambda}_{\exp(2\pi\sqrt{-1}\nu/(mM_0))}$ is not holomorphically trivial, 
it follows by $(\ref{eq_estim_on_W_Ls0hol_triv_case})$ that 
\[
\max_{j}\sup_{p^{-1}(U_j\times W_{m, \nu})}|F_{j, a}^{(\lambda)}(p, s)|\leq 
14M_0\Theta K^2\cdot B_m
\]
(again we remark that here we used {\bf Assumption $5$}). 
When $L^{a-e_\lambda}_{\exp(2\pi\sqrt{-1}\nu/(mM_0))}$ is holomorphically trivial, 
it follows by $(\ref{eq_estim_on_W_Ls0hol_non_triv_case})$ that 
\[
\max_{j}\sup_{p^{-1}(U_j\times W_{m, \nu})}|F_{j, a}^{(\lambda)}(p, s)|\leq 
2M_0K\cdot B_m
\]
holds. 

Thus now we have that one can obtain constants $\{A_m\}_{m=2}^\infty$ such that 
\[
\max_{j}\max_{|a|=m+1}\sup_{p^{-1}(U_j\times S_m)}|F^{(\lambda)}_{j, a}(p, s)|\leq A_{m+1}
\]
by letting $A(X)\in\mathbb{C}[[X^1, X^2, \dots, X^r]]$ be the formal power series defined by the equation 
\begin{eqnarray}
\frac{1}{14M_0\Theta K^2}A(X)
&=&(1+\Theta)M\cdot \left(\prod_{\nu=1}^r\frac{1}{1-R(X^{\nu}+A(X))}
-1-R\sum_{\nu}(X^{\nu}+A(X))\right)\nonumber \\
&&+3(1+\Theta)\Theta A(X)\cdot \left(\prod_{\nu=1}^r\frac{1}{1-R(X^{\nu}+A(X))}
-1\right). \nonumber
\end{eqnarray}
Denoting by 
$C$ the constant $14M_0\Theta K^2(1+\Theta)$, 
we let $F(X^1, X^2, \dots, X^r, Y)\in \mathcal{O}_{\mathbb{C}^{r+1}, 0}$ be the one defined by 
\begin{eqnarray}
F(X, Y):=&&-Y+CM\cdot \left(\prod_{\nu=1}^r\frac{1}{1-R(X^{\nu}+Y)}
-1-R\sum_{\nu}(X^{\nu}+Y)\right)\nonumber \\
&+&3C\Theta Y\cdot \left(\prod_{\nu=1}^r\frac{1}{1-R(X^{\nu}+Y)}
-1\right).  \nonumber
\end{eqnarray}
Then, as 
$\frac{\partial}{\partial Y}F(0, 0)
=-1\not=0$, it follows from the implicit function theorem that $A(X)$ is convergent. 

\subsection{End of the proof}

Let $s$ be an element of $\mathrm{U}(1)$. 
Then, by the estimates in 
Remark \ref{rmk:rule_extension} and \S \ref{section:estim_Fs}, one can carry out the argument as in the end of \S \ref{section:proof_outline} on each $U_{j, s}$ to obtain the solution $\widehat{w}_j$ of the functional equation $(\ref{eq:def_w_hat})$ defined on a neighborhood $\widehat{V}_{j, s}$ of $U_{j, s}$ in $X_s$. 
Note that clearly $\{(\widehat{V}_{j, s}, \widehat{w}_j)\}$ is also a local defining functions system of $Y_s$, since $w_j$ and $\widehat{w}_j$ coincide in one order jet. 
It follows by {\bf (Property)$_m$}'s that $\widehat{w}_j=T_{jk}(s)\cdot \widehat{w}_k$ holds on each $\widehat{V}_{j, s}\cap \widehat{V}_{k, s}$, the theorem holds. 
\qed



\section{Examples and Proof of Theorem \ref{thm:main_9pt_b-up_of_P^2_nodal} and \ref{thm:main_9pt_b-up_of_P^2_smooth}}

\subsection{Some examples}

First, we give an example of the configuration we described in \S \ref{section:setting_smooth}. 

\begin{example}\label{example:smooth}

Let  $(V, F)$ be a del Pezzo manifold of degree $1$: 
i.e. $V$ is a projective manifold of dimension $n$ and $F$ is an ample line bundle on $V$ with $K_V^{-1}\cong F^{n-1}$ and the self-intersection number $(F^n)$ is equal to $1$. 
In this case, as we wrote in \cite[\S 6.3]{K8}, it follows from \cite[6.4]{F} that ${\rm dim}\,H^0(V, L)=n$. 
Take general elements $D^0_1, D^0_2, \dots, D^0_n\in |F|$. 
By \cite[4.2]{F} and $(D^0_1, D^0_2, \dots, D^0_n)=(F^n)=1$, it holds that 
the intersection $\textstyle\bigcap_{\lambda=1}^nD^0_\lambda$ is a point, which we denote by $p_0$. It is clear that $D^0_\lambda$'s intersect each other transversally at $p_0$. 
From this fact and Bertini's theorem, we may assume that each $D^0_\lambda$ is non-singular. 

Consider an sequence of the subvarieties $V_n:=V, V_{n-1}:=D^0_1, V_{n-2}:=D^0_1\cap D^0_2, \cdots, V_1:=D^0_1\cap D^0_2\cdots, \cap D^0_{n-1}$. 
Denote by $F_\lambda$ the restriction $F|_{V_\lambda}$ for each $\lambda=1, 2, \cdots, n-1$. 
Note that it follows from a simple inductive argument that $(V_\lambda, F_\lambda)$ is also a del Pezzo manifold of degree $1$ for each $\lambda$. 
Especially, for $\lambda=1$, it holds that $V_1$ is an elliptic curve and ${\rm deg}\,F_1=1$. 
Let $\pi\colon\mathcal{X}\to S$ be the one obtained by the construction 
we described in \S \ref{section:setting_smooth} starting from $Z:=V$, $L:=F$, and $Y^0:=V_1$. 
By running the same argument as in \cite[\S 5]{N} inductively on $\lambda$, one have that this model actually enjoys {\bf Assumption $5$'} (One have that the anti-canonical bundle of $X_s$ is semi-ample, and that the morphism defined by the complete linear system $K_{X_s}^{-m}$ can be used as the fibration $b_s$ for each torsion $s$, where $m$ is a suitable positive integer). 
\end{example}

%

Next, we give an example of the configuration we described in \S \ref{section:setting_nodal}. 

\begin{example}\label{example:nodal}

Let $Y\subset \mathbb{P}^2$ be a cycle of rational curves which is of degree three, and $p_1, p_2, \dots, p_8$ be points of $Y\setminus Y_{\rm sing}$. 
Assume one of the following four conditions: 
$(1)$ $Y$ is a rational curve with one node, 
$(2)$ $Y$ consists of two irreduceble components $C_1$ and $C_2$ such that the degree of $C_1$ is one and of $C_2$ is two, $p_1, p_2\in C_1$ and $p_3, p_4, \dots, p_8\in C_2$, 
$(3)$ $Y$ consists of two irreduceble components $C_1$ and $C_2$ such that the degree of $C_1$ is two and of $C_2$ is three, $p_1, p_2, \dots, p_5 \in C_1$ and $p_6, p_7, p_8\in C_2$, 
or $(4)$ $Y$ consists of three irreduceble components $C_1, C_2$ and $C_3$ , $p_1, p_2\in C_1$, $p_3, p_4, p_5\in C_2$, and $p_6, p_7, p_8\in C_3$. 
Let $\pi\colon\mathcal{X}\to S$ be the one obtained by the construction 
we described in \S \ref{section:setting_smooth} 
by using the blow-up of $\mathbb{P}^2$ at $p_1, p_2, \dots, p_8$ as $Z$, 
and the strict transform of $Y$ as $Y^0$. 
By the same argument as in \cite[\S 5]{N}, one have that this model actually enjoys {\bf Assumption $5$''}.
\end{example}


\subsection{Proof of Theorem \ref{thm:main_9pt_b-up_of_P^2_nodal}}

Let $(X_Z, Y_Z)$ be as in \S 1 such that the anti-canonical bundle $K_{X_Z}^{-1}$ is nef, and that $Y_Z$ is a cycle of rational curves. 

When $K_{X_Z}^{-1}|_{Y_Z}$ is unitary flat (i.e. $|\alpha(K_{X_Z}^{-1}|_{Y_Z})|=1$), by the argument as in \cite[\S 7]{K6}, $(X_Z, Y_Z)=(X_s, Y_s)$ holds for some deformation $\pi\colon\mathcal{X}\to S$ as in Example \ref{example:nodal} and for some $s\in\mathrm{U}(1)\subset S$. 
Thus one have by Theorem \ref{thm:main_general} that there exists a local defining functions system $\{(V_j, w_j)\}$ of $Y_Z$ such that each ratio $w_j/w_k$ is a unitary constant. 
It follows by regarding $w_j$ as a local frame of $[Y_Z]$ on each $V_j$ that $[Y_Z]$ is flat on a neighborhood of $Y_Z$. 
Thus, by the argument as in the proof of \cite[Corollary 3.4]{K2} or \cite[\S 5]{K8} (see also \S \ref{section:relation_sp_nbhd}), one have the assertion $(i)$. 
Define a function $\Phi\colon \bigcup_jV_j\to\mathbb{R}$ by $\Phi:=\log |w_j|$ on each $V_j$. By considering neighborhoods $\{\Phi<\varepsilon\}$ of $Y_Z$ for sufficiently small positive numbers $\varepsilon$, one have that the assertion $(iii)$. 
The assertion $(iv)$ follows by considering Chern curvature forms of smooth Hermitian metrics on $[Y]$ with semi-positive curvature. 

When $K_{X_Z}^{-1}|_{Y_Z}$ is not unitary flat, neither the assertion $(i)$ nor $(iv)$ holds by \cite[Theorem 1.1 $(ii)$]{K6}. 
In this case, there exists a strongly plurisubharmonic function $\Psi$ on $V\setminus Y_Z$ for some neighborhood $V$ of $Y_Z$ according to \cite[Theorem 1.6 $(i)$]{K6}, by which one have that the assertion $(iii)$ does not hold. 
\qed

\subsection{Proof of Theorem \ref{thm:main_9pt_b-up_of_P^2_smooth}}

Let $(X_Z, Y_Z)$ be as in \S 1 such that $Y_Z$ is smooth and ${\rm rank}(N_Z)=1$, where $N_Z:=K_{X_Z}^{-1}|_{Y_Z}$ ($Z=\{p_1, p_2, \dots, p_9\}$). 
Denote by $V$ the blow-up of $\mathbb{P}^2$ at the eight points $p_1, p_2, \dots, p_7$, and $p_8$. Then, as $V$ is a del Pezzo manifold of degree $1$, one can obtain the deformation $\pi\colon \mathcal{X}\to S$ as in Example \ref{example:smooth}. 
Note that, it follows by the condition ${\rm rank}(N_Z)=1$ that one may assume that $X_Z\cong X_s$ for some element $s$ of $\mathrm{U}(1)\subset S$ by choosing $\tau$ and $q_0$ suitably in the construction in \S \ref{section:setting_smooth}.
Thus one have by Theorem \ref{thm:main_general} that there exists a local defining functions system $\{(V_j, w_j)\}$ of $Y_Z$ such that each ratio $w_j/w_k$ is a unitary constant. 

It follows by regarding $w_j$ as a local frame of $[Y_Z]$ on each $V_j$ that $[Y_Z]$ is flat on a neighborhood of $Y_Z$. 
Thus, by the argument as in the proof of \cite[Corollary 3.4]{K2} or \cite[\S 5]{K8} (see also \S \ref{section:relation_sp_nbhd}), one have the assertion $(i)$. 

Define a function $\Phi\colon \bigcup_jV_j\to\mathbb{R}$ by $\Phi:=\log |w_j|$ on each $V_j$. By considering neighborhoods $\{\Phi<\varepsilon\}$ of $Y_Z$ for sufficiently small positive numbers $\varepsilon$, one have that the assertion $(ii)$. 
\qed


\appendix

\section{Appendix: A correction to the paper ``Toward a higher codimensional Ueda theory''}

We found that the main theorem \cite[Theorem 1]{K5} was not correct. 
We have to add some assumptions in \cite[Theorem 1]{K5}. 
The main application \cite[Corollary 1]{K5} needs no correction. 

\subsection{Corrected form of \cite[Theorem 1]{K5}}

Corrected form of \cite[Theorem 1]{K5} is the following: 

\begin{theorem}\label{thm}
Let $X$ be a complex manifold, $S$ a smooth hypersurface of $X$, and $C$ be a smooth compact hypersurface of $S$ such that 
$N_{S/X}|_V$ is flat, where $V$ is a sufficiently small neighborhood of $C$ in $S$. 
Assume one of the following {two} conditions holds: 
$(i)$ $N_{C/S}\in\mathcal{E}_0(C)$, $N_{S/X}|_C\in\mathcal{E}_0(C)$, 
$(ii)$ $N_{C/S}$ and $N_{S/X}|_C$ are isomorphic to each other and they are elements of $\mathcal{E}_1(C)$. 
Further assume that $u_{n, m}(C, S, X; \{w_j\})=0$ holds for all $n\geq 1, m\geq 0$ and {for all system $\{w_j\}$ of order $(n, m)$, and that there exists a system of local defining functions of $C$ in $V$ of extension type infinity}. 
Then there exists a neighborhood $W$ of $C$ in $X$ such that $\mathcal{O}_X(S)|_W$ is flat. 
{Moreover, there exists a smooth hypersurface $Y$ of $W$ which intersects $S$ transversally along $C$. }
\end{theorem}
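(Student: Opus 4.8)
The plan is to reduce the assertion to a \emph{simultaneous} linearization problem for the chain $C\subset S\subset X$ and then carry out the iterative Ueda-type argument reviewed in \S\ref{section:toy_model_single}, now using the Diophantine/torsion hypotheses $(i)$, $(ii)$ in place of the deformation trick of \S\ref{section:our_strategy_in_toy_model}, which is not needed here. First I would fix a finite open covering $\{U_j\}$ of $C$ and neighborhoods $\mathcal{V}_j$ of $U_j$ in $X$; on each $\mathcal{V}_j$ one has a defining function $w_j$ of $S$, and, using the hypothesis that there is a system of defining functions of $C$ in $V$ of extension type infinity, a holomorphic function $\tau_j$ on $\mathcal{V}_j$ whose restriction to $S$ is a defining function of $C$ near $U_j$ and whose transition cocycle agrees with the flat model to infinite order along $C$. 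After a preliminary normalization (as in the set-up of \S\ref{section:general}) one may assume $dw_j=t_{jk}\,dw_k$ along $C$ and $d\tau_j\equiv \mu_{jk}\,d\tau_k$ along $C$ modulo $dw_j$ for unitary constants $t_{jk},\mu_{jk}$, so that $(z_j,\tau_j,w_j)$ form coordinates on $\mathcal{V}_j$, where $z_j$ is a coordinate of $U_j$.

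Next I would simultaneously modify $(w_j,\tau_j)$ by solving functional equations
\[
w_j=\widehat{w}_j+\sum_{n+m\ge 2}F^{(1)}_{j,n,m}\,\widehat{w}_j^{\,n}\widehat{\tau}_j^{\,m},\qquad
\tau_j=\widehat{\tau}_j+\sum_{n+m\ge 2}F^{(2)}_{j,n,m}\,\widehat{w}_j^{\,n}\widehat{\tau}_j^{\,m},
\]
with the holomorphic coefficients $F^{(\bullet)}_{j,n,m}$ constructed inductively on $n+m$ so that the new local defining functions satisfy $\widehat{w}_j=t_{jk}\widehat{w}_k$ and $\widehat{\tau}_j=\mu_{jk}\widehat{\tau}_k$ on overlaps up to increasing order. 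At the $(n,m)$-th step the solvability of the corresponding $\delta$-equation in one-dimensional \v{C}ech cohomology with coefficients in a vector bundle assembled from $N_{S/X}^{-n}$ and $N_{C/S}^{-m}$ is exactly the vanishing of the obstruction $u_{n,m}(C,S,X;\{w_j\})$; as in \cite[p.\ 598]{U} this forces one to pass through \emph{all} admissible systems of order $(n,m)$, which is why the hypothesis is imposed for all such systems. The $L^\infty$ estimate of $F^{(\bullet)}_{j,n,m}$ then follows the scheme of \cite[\S 4]{U}: Ueda's lemma \cite[Lemma 4]{U} bounds the operator norm of $\delta$ by a constant times $d_{\mathrm{Pic}^0(C)}\bigl(\mathcal{O}_C,\,N_{S/X}^{\otimes n}\otimes N_{C/S}^{\otimes m}\bigr)^{-1}$, and under $(ii)$ the isomorphism $N_{C/S}\cong N_{S/X}|_C$ rules out the resonances $N_{S/X}^{-n}\otimes N_{C/S}^{-m}\cong\mathcal{O}_C$ with $(n,m)\ne(0,0)$ while the Diophantine condition gives the polynomial lower bound, and under $(i)$ only finitely many line bundles occur so a direct implicit-function-theorem estimate applies (cf.\ \cite[\S 4.4]{U}); in either case the majorant power series converges. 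The implicit function theorem on a small neighborhood $W$ of $C$ then produces honest holomorphic $\widehat{w}_j,\widehat{\tau}_j$ with unitary-constant transition functions.

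Finally, regarding $\widehat{w}_j$ as a local frame of $\mathcal{O}_X(S)$ on each $\mathcal{V}_j\cap W$ shows that $\mathcal{O}_X(S)|_W$ is flat. For the last assertion, $\widehat{\tau}_j=\mu_{jk}\widehat{\tau}_k$ on overlaps means that the local hypersurfaces $\{\widehat{\tau}_j=0\}$ glue to a hypersurface $Y\subset W$; since $\widehat{\tau}_j|_S$ is a defining function of $C$ near $U_j$, one gets $Y\cap S=C$ and $d(\widehat{\tau}_j|_S)\ne 0$ along $C$, so $d\widehat{\tau}_j$ is not proportional to $dw_j$ there, which gives smoothness of $Y$ after shrinking $W$ and transversality of $Y$ and $S$ along $C$. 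The main obstacle I expect lies in the $\tau$-component of the estimate: one must verify that ``extension type infinity'' is precisely the hypothesis controlling the cross terms produced in the $\tau_j$-equation by the modification in the $\widehat{w}_j$-direction, so that the two linearizations run simultaneously without spoiling each other, and that the obstruction generated at each step is genuinely the class $u_{n,m}$ covered by the assumption; this bookkeeping, rather than any single inequality, is where the correction to \cite{K5} really bites.
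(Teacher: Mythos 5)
The paper does not actually prove this theorem; the appendix only states the corrected form of \cite[Theorem 1]{K5}, diagnoses the three mistakes in the original argument, and refers to \cite[\S 3.4]{KO} for the proof. Measured against what the appendix says that corrected proof must do, you have located the decisive idea: after correcting the Taylor expansion of \cite[Lemmas 6, 7]{K5} the convergence estimate fails if one refines only $\{w_j\}$, and the repair is a \emph{simultaneous} functional equation refining both the $S$-defining functions and the extensions of the $C$-defining functions. Your coupled equations for $(\widehat{w}_j,\widehat{\tau}_j)$, with the Ueda operator-norm bound and the $(i)$ torsion / $(ii)$ $N_{C/S}\cong N_{S/X}|_C\in\mathcal{E}_1(C)$ dichotomy supplying the majorant, are the intended mechanism, and reading off $Y$ as $\{\widehat{\tau}_j=0\}$ gives the added transversal in the corrected statement.

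Two points should be tightened. The appendix says the refinement is done ``with fixing only $\{x_j\}$ and $\{y_j\}$'': this forces $\widehat{\tau}_j|_S=y_j$, so your second functional equation must not contain pure-$\widehat{\tau}_j$ terms; restrict that sum to $n\ge 1$ (equivalently force $F^{(2)}_{j,0,m}=0$), otherwise you overwrite the normalization guaranteed by condition $(a)$ of extension type infinity. Second, the obstruction governing the $\tau$-direction at each step is not the Ueda class $u_{n,m}$ but the class $v_{n,m}(C,S,X;\{z_j\})\in H^1\bigl(C,N_{S/X}|_C^{-n}\otimes N_{C/S}^{-m+1}\bigr)$ introduced in \cite{KO} (note the different twist $-m+1$), and ``extension type infinity'' is precisely the requirement that $v_{n,m}$ vanish for \emph{every} admissible $(n,m)$-extension $\{z_j\}$, which is what lets the two induction chains run together. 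You rightly flag this bookkeeping as the crux, but the induction hypotheses should be carried explicitly in terms of both $u_{n,m}$ and $v_{n,m}$ so that each step's solvability is unambiguous; with that made explicit your sketch is the argument the paper is deferring to.
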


In the above statement, we removed the case $(iii)$ from \cite[Theorem 1]{K5} and added the assumption on the existence of a system of local defining functions of $C$ in $V$ {\it of extension type infinity}, which is the notion we posed in \cite{KO}. 
As a result, we could also add the conclusion on the existence of the transversal $Y$ to \cite[Theorem 1]{K5}. 
For the proof of Theorem \ref{thm}, see \cite[\S 3.4]{KO}. 

Let us explain some terms in Theorem \ref{thm}. 
We say the line bundle $L$ on a manifold $M$ is {\it flat} if the transition functions are chosen as constant functions valued in $U(1):=\{t\in\mathbb{C}\mid |t|=1\}$ (i.e. $L\in H^1(M, U(1))$). 
We denote by $\mathcal{E}_0(C)$ the set of all flat line bundles $F$ such that there exists a positive integer $n$ with $F^n=\mathbb
{I}$, where $\mathbb{I}$ is the holomorphically trivial line bundle. 
We denote by $\mathcal{E}_1(C)$ the set of all flat line bundles $F$ which satisfies the condition $|\log d(\mathbb{I}, F^{n})|=O(\log n)$ as $n\to\infty$, 
where $d$ is an invariant distance of the Picard group ($\mathcal{E}_1(C)$ does not depend on the choice of $d$, see \cite[\S 4.1]{U}). 
Let $(C, S, X)$ be as in Theorem \ref{thm}. 
In \cite[\S 3.1]{K5}, we defined the obstruction class $u_{n, m}(C, S, X)=u_{n, m}(C, S, X; \{w_j\})\in H^1(C, N_{S/X}|_C^{-n}\otimes N_{C/S}^{-m})$ for each $n\geq 1, m\geq 0$ and for each {\it system $\{w_j\}$ of order $(n, m)$}. 
We here explain the meaning of our new assumption ``there exists a system of local defining functions of $C$ in $V$ of extension type infinity''. 
Let $V$ be a sufficiently small tubular neighborhood of $C$ in $S$ and 
$W$ be a sufficiently small tubular neighborhood of $C$ in $X$ such that $W\cap S=V$. 
Take a sufficiently fine open covering 
$\{U_j\}$ of $C$, $\{V_j\}$ of $V$, and $\{W_j\}$ of $W$ 
such that $V_j=W_j\cap S$, $U_j=V_j\cap C$, and $U_{jk}:=U_j\cap U_k=\emptyset$ iff $W_{jk}:=W_j\cap W_k=\emptyset$. 
Extend a coordinates system $x_j$ of $U_j$ to $W_j$. 
Let $y_{j}$ be a defining function of $U_{j}$ in $V_{j}$ 
and $w_j$ a defining function of $V_j$ in $W_{j}$. 
As both $N_{S/X}$ and $N_{C/S}$ are flat in our settings, we may assume that 
$t_{jk}w_k=w_j+O(w_j^2)$ holds on $W_{jk}$ and $s_{jk}y_k=y_j+O(y_j^2)$ holds on $V_{jk}$ for some constants $t_{jk}, s_{jk}\in U(1)$. 
The assumption ``there exists a system of local defining functions of $C$ in $V$ of extension type infinity'' means that we can choose such $\{y_j\}$ with the following two additional properties: 
$(a)$ $s_{jk}y_k=y_j$ holds on $V_{jk}$ for each $j$ and $k$, and
$(b)$ $\{y_{j}\}$ is {\it of extension type infinity} in the sense of \cite[Definition 3.2]{KO}: i.e. the class $v_{n, m}(C, S, X;\{z_j\})\in H^1(C, N_{S/X}|_C^{-n}\otimes N_{C/S}^{-m+1})$ is equal to zero for each $n\geq 1, m\geq 0$ and for {\it any} type $(n, m)$ extension $\{z_{j}\}$ of $\{y_{j}\}$ (the class $v_{n, m}(C, S, X;\{z_j\})$ is the obstruction class we posed in \cite{KO}). 

We here remark that, as we will see later, \cite[Corollary 1]{K5} needs no correction. 
It is because the example of general $8$ points blow-up of $\mathbb{P}^3$ automatically satisfies this condition. 

\subsection{Details of the mistakes}

There are following three mistakes in \cite{K5}: 
one is on the well-definedness of the obstruction classes, 
another one is in the statement of \cite[Lemma 1]{K5}, 
and the other one is in a Taylor expansion in Lemma 6 and Lemma 7. 
In this section, we explain the details of these mistakes. 

\subsubsection{Mistake on the well-definedness of the obstruction classes}

The first one is on the well-definedness of the obstruction classes. 
In \cite[Proposition 3]{K5}, we stated that the $(n, m)$-th Ueda class $u_{n, m}(C, S, X)$ of the triple $(C, S, X)$ is independent of the choice of a system $\{w_j\}$ of order $(n, m)$ up to non-zero constant multiples. 
However, we found a critical mistake in the proof. 
Thus, now we should denote the obstruction class by $u_{n, m}(C, S, X; \{w_j\})$. 
See also \cite[\S 2.2.2, \S 3.2.2]{KO}.

\subsubsection{Mistake in the statement of \cite[Lemma 1]{K5}}

We also found a mistake in \cite[Lemma 1]{K5}, which is an open analogue of \cite[Lemma 2]{KS}. 
The corrected form of \cite[Lemma 1]{K5} should be stated as follows: 

\begin{lemma}[Corrected form of {\cite[Lemma 1]{K5}}]\label{lem:KS_2}
Let  $C$ be a compact complex manifold embedded in a complex manifold $S$ 
Fix a sufficiently small connected neighborhood $V$ of $C$ in $S$ and a sufficiently fine open covering $\{V_j\}$ of $V$ which consists of a finite number of open sets. 
Fix also a relatively compact open domain $V_0\subset V$ which contains $C$. 
For each flat line bundle $E$ on $V$, there exists a positive constant $K=K(E)$
such that, 
for each $1$-cocycle $\alpha=\{(V_{jk}, \alpha_{jk})\}$ of $E$ which can be realized as the coboundary of some $0$-cochain, there exists a $0$-cochain $\beta=\{(V_j{ \cap V_0}, \beta_j)\}$ of $E$ such that $\alpha{ |_{V_0}}$ is equal to the coboundary $\delta (\beta)$ of $\beta$ and the inequality 
\[
\max_j\sup_{V_0\cap V_j}|\beta_j|\leq K\cdot\max_{jk}\sup_{V_0\cap V_{jk}}|\alpha_{jk}|
\]
holds. 
\end{lemma}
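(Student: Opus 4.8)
The plan is to deduce Lemma~\ref{lem:KS_2} from the Banach-space open mapping theorem, following the scheme of \cite[Lemma 2]{KS} but arranging that the non-compactness of the ambient neighborhood $V$ is absorbed by passing to the relatively compact domain $V_0$. Since $E$ is flat on $V$, I would first fix trivializations for which the transition functions $e_{jk}$ are constants valued in $\mathrm{U}(1)$, together with the associated flat Hermitian metric on $E$; then the quantities $\sup|\alpha_{jk}|$ and $\sup|\beta_j|$ in the statement are intrinsic and the coboundary operator $\delta$ is norm-compatible with the flat structure.

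Next I would choose auxiliary neighborhoods $V_0\Subset V_1\Subset V$ of $C$, using the fineness of the finite covering $\{V_j\}$ to ensure that $\overline{V_1\cap V_j}\Subset V_j$ for every $j$ and that $\{V_j\cap C\}$ is a Leray covering of $C$. Given a cocycle $\alpha=\{(V_{jk},\alpha_{jk})\}$ that is the coboundary of a (possibly unbounded) $0$-cochain $\gamma=\{(V_j,\gamma_j)\}$, its restriction $\gamma|_{V_1\cap V_j}$ is automatically bounded; hence $\alpha|_{V_0}$ lies in the image of the bounded operator
\[
T\colon \check C^0_\infty(\{V_1\cap V_j\},\mathcal O(E))\longrightarrow \check Z^1_\infty(\{V_0\cap V_{jk}\},\mathcal O(E)),\qquad T\beta:=(\delta\beta)|_{V_0},
\]
where the subscript $\infty$ denotes the Banach space of cochains bounded on the indicated (relatively compact in $V$) sets.

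The decisive step is to show that $\operatorname{im}T$ is closed. For this I would run the Cartan--Serre compactness argument attached to the nested data $V_0\Subset V_1\Subset V$ (and, if needed, to a further shrinking of $\{V_j\}$): the restriction maps between the relevant Banach spaces of bounded holomorphic cochains are compact operators by Montel's theorem, and the compactness of $C$ makes the pertinent relative cohomology finite-dimensional, so by L.~Schwartz's lemma on compact perturbations of surjective maps the range of $T$ is closed. The open mapping theorem then applies to the continuous bijection $\check C^0_\infty(\{V_1\cap V_j\})/\ker T\xrightarrow{\ \sim\ }\operatorname{im}T$: its inverse is bounded with some norm $K=K(E)$, so every $\alpha\in\operatorname{im}T$ admits a preimage $\beta$ with $\|\beta\|\le K\,\|\alpha\|$, and restricting $\beta$ to $\{V_0\cap V_j\}$ yields the desired $0$-cochain satisfying $\max_j\sup_{V_0\cap V_j}|\beta_j|\le K\max_{jk}\sup_{V_0\cap V_{jk}}|\alpha_{jk}|$. (Alternatively one can reduce, via a partition of unity subordinate to $\{V_0\cap V_j\}$, to solving a $\bar\partial$-equation on a domain slightly larger than $V_0$ with an $L^\infty$ estimate, but the solvability-with-estimate there again rests on the same finiteness input coming from the compactness of $C$.)

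The main obstacle I expect is precisely this closed-range step: unlike in \cite[Lemma 2]{KS}, where the base is the compact manifold itself and finiteness of cohomology is available at once, here the cochains live on the non-compact neighborhood $V$, so one must descend to relatively compact subdomains enclosing $C$ and exploit the compactness of $C$ through Montel's theorem; the accompanying bookkeeping of which domain each sup-norm refers to, so that only $\sup_{V_0\cap V_{jk}}|\alpha_{jk}|$ appears on the right-hand side, is the delicate part. The remaining ingredients --- the flat normalization of $E$, the reduction to a bounded $0$-cochain by shrinking, and the extraction of the final inequality from the open mapping theorem --- are routine.
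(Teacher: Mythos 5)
The paper itself does not give a proof of this lemma; it is stated in the appendix merely as the corrected form of \cite[Lemma~1]{K5}, with a pointer to \cite[\S 3.4]{KO} for details. So there is no in-paper argument to compare against, and I will assess your proposal on its own terms.

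Your overall strategy --- normalize $E$ to a flat unitary cocycle, reduce to a bounded coboundary operator, show its range is closed by a Cartan--Serre/Montel argument exploiting the compactness of $C$, and then invoke the Banach open mapping theorem --- is the right circle of ideas, and it is clearly the intended mechanism behind the correction (the whole point of passing from $V_j$ to $V_j\cap V_0$ is to make the compactness argument available). However there is a concrete flaw in the execution which makes the proof as written fail.

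You claim that, by the fineness of $\{V_j\}$, one can arrange $\overline{V_1\cap V_j}\Subset V_j$ for every $j$, and you use this both to conclude that $\gamma|_{V_1\cap V_j}$ is automatically bounded and to get compactness of the restriction maps via Montel. This cannot be arranged, no matter how fine $\{V_j\}$ is. If $V_1\cap\partial V_j\ne\emptyset$ for some $j$, then $\overline{V_1\cap V_j}$ meets $\partial V_j$ and the containment fails; and if $V_1\cap\partial V_j=\emptyset$ for \emph{all} $j$, then, $V_1$ being connected and $\{V_j\}$ being an open cover, one would have $V_1\subset V_j$ for a single $j$ (so $C\subset V_j$), which a fine covering of a positive-dimensional compact $C$ never achieves --- and even then $\overline{V_1\cap V_j}=\overline{V_j}\not\Subset V_j$. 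Consequently, a holomorphic function on $V_j$ may well be unbounded on $V_1\cap V_j$, so your reduction to a bounded $0$-cochain does not go through. For the same reason the restriction maps $\check C^q_\infty(\{V_1\cap V_j\})\to\check C^q_\infty(\{V_0\cap V_j\})$ are not compact (the sets share part of $\partial V_j$), so the Schwartz-lemma step is not correctly set up. Worse, if these restrictions \emph{were} compact then $T=\delta\circ r^0$ would itself be a compact operator, and a compact operator with infinite-dimensional closed range does not exist; so having the restriction compact would actually contradict the closed-range conclusion you are trying to reach.

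The fix requires shrinking the \emph{covering sets}, not just the ambient domain: one needs a second finite covering $\{V_j'\}$ with $\overline{V_j'}\Subset V_j$ (so genuinely compactly contained, which forces shrinking both in the $C$-direction and in the transverse direction) that still covers a neighborhood of $\overline{V_0}$. Then $\gamma|_{V_j'}$ is automatically bounded, the refinement $\check C^q_\infty(\{V_j\})\to\check C^q_\infty(\{V_j'\})$ is compact by Montel, and the Kodaira--Spencer/Schwartz argument applies in the form $(d,c)\mapsto\delta d+\rho c$ to give closed range of $\delta$ on $\check C^0_\infty(\{V_j'\})$; one then transports the resulting bounded primitive from $\{V_j'\}$ back to $\{V_0\cap V_j\}$ using the cocycle relation, at the cost of an extra factor of $\|\alpha\|$ in the constant. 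This two-coverings structure (exactly as in \cite[Lemma 2]{KS}) is the missing ingredient; replacing it by a single covering and two nested domains $V_0\Subset V_1$, as you propose, does not produce the required compactness.
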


This mistake is critical for proving \cite[Theorem 1]{K5} for the case $(iii)$, which is why we had to remove this case. 

\subsubsection{Mistake in a Taylor expansion in Lemma 6 and Lemma 7}

Here we explain the mistake under the configuration of Lemma 7. 
Lemma 7 is the lemma for defining the system of functions $\{G_j^{(n, m)}\}$ inductively: 
i.e. assuming that $\{G_j^{(\nu, \mu)}\}$ has already defined for each $(\nu, \mu)<(n, m)$, we are stating how to define $\{G_j^{(n, m)}\}$ in this lemma. 
For the definition of $\{G_j^{(n, m)}\}$, we regard $G_j^{(\nu, \mu)}z_j^\mu$ as the function defined on $W_j$ which does not depend on the variable $w_j$ and considered the expansion 
\begin{eqnarray}\label{eq:G_exp}
G_j^{(\nu, \mu)}(x_j)\cdot z_j^\mu&=&G_j^{(\nu, \mu)}(x_j(x_k, z_k, w_k))\cdot z_j(x_k, z_k, w_k)^\mu \nonumber \\
&=&G_j^{(\nu, \mu)}(x_j(x_k, 0, 0))\cdot s_{jk}^\mu z_k^\mu\nonumber \\
&&+\sum_{q=1}^\infty G_{jk, 0, q}^{(\nu, \mu)}(x_k)\cdot z_k^{\mu+q}
+\sum_{p=1}^\infty\sum_{q=0}^\infty G_{jk, p, q}^{(\nu, \mu)}(x_k)\cdot z_k^{\mu+q}w_k^{p} \nonumber
\end{eqnarray}
on $W_{jk}$, {in which we made a mistake}. 
This expansion should be 
\begin{eqnarray}\label{eq:G_exp_2}
G_j^{(\nu, \mu)}(x_j)\cdot z_j^\mu&=&G_j^{(\nu, \mu)}(x_j(x_k, z_k, w_k))\cdot z_j(x_k, z_k, w_k)^\mu \nonumber \\
&=&G_j^{(\nu, \mu)}(x_j(x_k, 0, 0))\cdot s_{jk}^\mu z_k^\mu\nonumber \\
&&+\sum_{q=1}^\infty G_{jk, 0, q}^{(\nu, \mu)}(x_k)\cdot z_k^{\mu+q}
+\sum_{p=1}^\infty\sum_{q=0}^\infty G_{jk, p, q}^{(\nu, \mu)}(x_k)\cdot { z_k^{q}} w_k^{p}.\nonumber
\end{eqnarray}
Even after this correction, the inductive definition of $\{G_j^{(n, m)}\}$ can be executed just as in Lemma 7. 
However, the norm estimate problem occurs in this case so that we can not show the convergence of the functional equation (8) in \cite[\S 4]{K5}. 
To avoid this difficulty, we have to refine not only the system $\{w_{j}\}$, but also the extension $\{z_{j}\}$ of $\{y_{j}\}$ by using a suitable functional equation at the same time (with fixing only $\{x_{j}\}$ and $\{y_{j}\}$), see \cite[\S 3.4]{KO} for the details. 

We here remark that, by the same reason, we also have to correct {\cite[Proposition 4, Lemma 5]{K5}}. 

\subsection{Proof of \cite[Corollary 1]{K5}}

Here we prove the following: 

\begin{corollary}[={\cite[Corollary 1]{K5}}]\label{main_cor}
Let $C_0\subset\mathbb{P}^3$ be a complete intersection of two quadric surfaces of $\mathbb{P}^3$ and let $p_1, p_2, \dots, p_8\in C_0$ be $8$ points different from each other. 
Assume $\mathcal{O}_{\mathbb{P}^3}(-2)|_{C_0}\otimes \mathcal{O}_{C_0}(p_1+p_2+\dots+p_8)\in \mathcal{E}_1(C_0)$. 
Then the anti-canonical bundle of the blow-up of $\mathbb{P}^3$ at $\{p_j\}_{j=1}^8$ is not semi-ample, however admits a smooth Hermitian metric with semi-positive curvature. 
\end{corollary}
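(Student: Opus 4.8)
The plan is to deduce Corollary \ref{main_cor} from Theorem \ref{thm} together with the argument recalled in \S \ref{section:relation_sp_nbhd}, following essentially the structure of the original proof in \cite{K5} but with the new hypothesis (``existence of a system of local defining functions of extension type infinity'') verified for this particular configuration. First I would set up the notation: let $X$ be the blow-up of $\mathbb{P}^3$ at $p_1,\dots,p_8$, let $S\subset X$ be the strict transform of a general quadric surface through $C_0$, and let $C\subset S$ be the strict transform of $C_0$; then $Y:=K_X^{-1}$ satisfies $\mathcal{O}_X(Y)=\mathcal{O}_X(S')$ for a suitable anticanonical member, and one computes $N_{S/X}|_C$ and $N_{C/S}$ both to be $\mathcal{O}_{\mathbb{P}^3}(-2)|_{C_0}\otimes\mathcal{O}_{C_0}(p_1+\cdots+p_8)$, which by hypothesis lies in $\mathcal{E}_1(C)$ and is isomorphic to itself; so we are in case $(ii)$ of Theorem \ref{thm}. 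The non-semi-ampleness is the easy half: since $N_{S/X}|_C$ is a non-torsion element of ${\rm Pic}^0(C)$ under the genericity built into $\mathcal{E}_1(C)\setminus\mathcal{E}_0(C)$, any multiple of $K_X^{-1}$ restricted to $C$ is non-trivial, so $K_X^{-m}$ cannot be globally generated for any $m$; this is exactly the argument already used in \cite{K5} and needs no change.

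The substance is the semi-positivity. The key steps, in order, are: (1) verify that all the obstruction classes $u_{n,m}(C,S,X;\{w_j\})$ vanish for every $n\geq 1$, $m\geq 0$ and every system of order $(n,m)$; (2) verify the new hypothesis that there exists a system of local defining functions of $C$ in $V$ of extension type infinity, i.e. all the classes $v_{n,m}(C,S,X;\{z_j\})$ of \cite{KO} vanish; (3) apply Theorem \ref{thm} to obtain a neighborhood $W$ of $C$ on which $\mathcal{O}_X(S)|_W$ is flat, hence a neighborhood of the anticanonical curve $Y$ on which $[Y]$ is flat (here one uses that near $C$ the anticanonical divisor and $S$ agree to the needed order, or more simply one works directly with the anticanonical member as in \cite{K2}); (4) combine the flat metric $h_W$ on a neighborhood with the singular metric $h_{\rm sing}$ (defined by $|f_Y|_{h_{\rm sing}}^2\equiv 1$) via the regularized minimum construction of \S \ref{section:relation_sp_nbhd} (cf. \cite[Corollary 3.4]{K2}, \cite[\S 5]{K8}) to produce a global $C^\infty$ Hermitian metric on $K_X^{-1}$ with semi-positive curvature. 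Steps (3) and (4) are formal given the preceding sections.

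For step (1), the vanishing of the Ueda classes for the general $8$-point blow-up of $\mathbb{P}^3$ was the content of the original \cite[Corollary 1]{K5} argument and survives the correction: it is proved by the ``$n$-th root of the fibration defined by the anticanonical system'' type argument of \cite{N}, exploiting that for a torsion choice of the eight points the anticanonical bundle of the corresponding fiber is semi-ample and defines a fibration over $\mathbb{P}^1$ near the curve; passing to the limit/deformation this forces the obstruction cocycles to be coboundaries. For step (2), the point of the Appendix's remark ``the example of general $8$ points blow-up of $\mathbb{P}^3$ automatically satisfies this condition'' is that the \emph{same} fibration argument which kills $u_{n,m}$ also kills $v_{n,m}$: a system of local defining functions pulled back from the base of that fibration is automatically of extension type infinity, because the extension classes are computed inside the trivialized family and there is nothing to obstruct. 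Concretely, I would: choose a family $\pi\colon\mathcal{X}\to S$ of $8$-point blow-ups as in Example \ref{example:smooth}/\cite{N} (a del Pezzo threefold of degree $1$ arises here, or the appropriate codimension-two analogue), observe that over the torsion locus the fibers have semi-ample anticanonical bundle and hence the requisite fibration $b_s$, extract from $b_s^{\,1/m}$ a system $\{y_j\}$ of defining functions of $C$ in $V$ with unitary-constant transition functions, and check directly that all its type $(n,m)$ extensions have vanishing $v_{n,m}$ because they live in the pulled-back foliated structure.

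The main obstacle I anticipate is precisely step (2): producing and justifying the ``extension type infinity'' system. The subtlety flagged in the Appendix (the mistaken Taylor expansion in \cite[Lemma 6, Lemma 7]{K5} and the breakdown of the norm estimate) means one cannot simply refine $\{w_j\}$ alone — one must refine the extension $\{z_j\}$ simultaneously, and for \cite[Corollary 1]{K5} this is circumvented by the special geometry rather than by a general estimate. So the delicate part is to argue rigorously that the family of $8$-point blow-ups of $\mathbb{P}^3$ has, along the curve $C_0$, a fibration structure whose defining functions are genuinely of extension type infinity in the sense of \cite[Definition 3.2]{KO}; this requires checking that the relevant higher obstruction cocycles $v_{n,m}$, computed with respect to these adapted local data, are not merely closed but exact, which I would do by the inductive comparison with the torsion fibers exactly as in \cite[\S 3.4]{KO} and \cite[\S 5]{N}. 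Everything else (the cohomology vanishing on $C_0$, the assembly of the global metric) is routine given \S \ref{section:relation_sp_nbhd} and the corrected Theorem \ref{thm}.
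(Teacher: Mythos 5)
Your global strategy (apply Theorem \ref{thm} in case $(ii)$, combine the resulting flat metric near $C$ with the singular metric via the regularized minimum construction of \S \ref{section:relation_sp_nbhd}, and derive non-semi-ampleness from the non-torsion of $N$) agrees with the paper, and your steps (3) and (4) are fine. But the heart of the Appendix --- verifying that the example ``automatically satisfies'' the new hypothesis --- is handled in the paper by a very short direct computation, and your proposed replacement (deformation to torsion fibers, lifting a fibration, passing to a limit) both overshoots and leaves a genuine gap open.

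The simplifying facts you do not use are these. Since $C$ is an elliptic curve and $N := N_{C/S_0} \cong N_{S_0/X}|_C$ is a non-torsion flat line bundle, $H^1(C, N^{-k}) = 0$ for every $k > 0$. This kills $u_{n,m}\in H^1(C, N^{-(n+m)})$ for all $n\geq 1$, $m\geq 0$ outright, so the vanishing you attribute in step (1) to the fibration argument of \cite{N} is in fact automatic; the \cite{N}-style fibration is only needed in the torsion case, {\bf Assumption $5'$}, which is not the situation of this Corollary. The same vanishing kills $v_{n,m}\in H^1(C, N_{S/X}|_C^{-n}\otimes N_{C/S}^{-m+1}) = H^1(C, N^{-(n+m-1)})$ for all $(n,m) > (1,0)$.

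The one case this does not reach, and the genuine content of condition $(b)$, is $(n,m)=(1,0)$, where $v_{1,0}\in H^1(C,\mathcal{O}_C)\cong \mathbb{C}$. Your proposal offers nothing concrete here: the phrase ``live in the pulled-back foliated structure'' does not produce a reason a class in a nontrivial $H^1$ must vanish, and you yourself flag step (2) as the main obstacle without closing it. The paper's mechanism is the presence of the \emph{second} quadric through $C_0$: its strict transform $S_\infty$ is a smooth hypersurface meeting $S_0$ transversally along $C$ (this is exactly the complete-intersection structure), and one fixes the extension $\{z_j\}$ to consist of local defining functions of $W_j\cap S_\infty$. Then $s_{jk}z_k/z_j$ is holomorphic near $W_{jk}\cap S_\infty$, so the coefficient $p_{jk}^{(1)}(x_j,z_j)$ is divisible by $z_j$ and $v_{1,0}\equiv 0$; independence of the choice of extension is checked as in \cite[\S 3.2.2]{KO}, so this single computation suffices. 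The transversal $Y$ required by Theorem \ref{thm} is likewise $S_\infty$, not something requiring a separate construction. Without this observation your argument has a hole at precisely the one place cohomology vanishing cannot fill.
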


\begin{proof}[Proof of Corollary \ref{main_cor}]
We use the notations in \cite[\S 5.2]{K5}. 
We apply Theorem \ref{thm} to the triple $(C, S_0, X)$. 
We here remark that the existence of the transversal $Y$ is clear in this example (consider $Y:=S_\infty$). 

All we have to do here is to check the added condition ``there exists a system of local defining functions of $C$ in $V$ of extension type infinity''. 
Let $\{s_{jk}\}$ and $\{y_j\}$ be as in \S 1 here. 
As $u_n(C, S_0)\in H^1(C, N^{-n})=0$ for each $n\geq 1$, we can conclude from \cite[Theorem 3]{U} that we may assume the condition $(a)$ $s_{jk}y_k=y_j$ holds on $V_{jk}$ for each $j$ and $k$. 
We will check the condition 
$(b)$ the class $v_{n, m}(C, S, X;\{z_j\})\in H^1(C, N_{S/X}|_C^{-n}\otimes N_{C/S}^{-m+1})$ is equal to zero for each $n\geq 1, m\geq 0$ and for {\it any} type $(n, m)$ extension $\{z_{j}\}$ of $\{y_{j}\}$. 
First we will check the case where $(n, m)=(1, 0)$. 
Note that the class $v_{1, 0}(C, S, X; \{z_j\})$ does not depend on the choice of an extension $\{z_{j}\}$ of $\{y_{j}\}$ (nor a system $\{w_j\}$). 
It can be shown by just the same (and much more simple) argument as in \cite[\S 3.2.2]{KO}. 
Thus, it is sufficient to show that $v_{1, 0}(C, S, X; \{z_j\})=0$ for a suitably fixed extension $\{z_{j}\}$ of $\{y_{j}\}$. 
For this purpose, let us fix an extension $z_{j}$ of $y_{j}$ such that $z_j$ is a defining function of $W_j\cap S_\infty$. 
Let 
\[
s_{jk}z_k=z_j+p_{jk}^{(1)}(x_j, z_j)\cdot w_j+O(w_j^{2})
\]
be the expansion in $w_j$ and 
\[
p_{jk}^{(1)}(x_j, y_j)=q_{jk}^{(1,0)}(x_j)+O(y_{j})
\]
be the expansion of $p_{jk}^{(1)}|_{V_{jk}}$ in $y_j$ for each $j$ and $k$. 
As $s_{jk}z_k/z_j$ is holomorphic around $W_{jk}\cap S_\infty$, we obtain that $p_{jk}^{(1)}(x_j, z_j)$ can be divided by $z_j$. 
Therefore we obtain that $v_{1, 0}(C, S, X; \{z_j\})=[\{q_{jk}^{(1,0)}\}]\equiv [\{0\}]=0$. 
Next we will check the case where $(n, m)>(1, 0)$. 
In this case, as $N$ is non-torsion and $n+m-1>0$, we obtain that $H^1(C, N_{S/X}|_C^{-n}\otimes N_{C/S}^{-m+1})=H^1(C, N^{-n-m+1})=0$ holds, which proves the assertion. 
\end{proof}


\end{document}